\renewcommand{\a}{\alpha}
\renewcommand{\b}{\beta}
\newcommand{\e}{\epsilon}
\renewcommand{\O}{\Omega}
\newcommand{\la}{\langle}
\newcommand{\ra}{\rangle}
\newcommand{\leqs}{\leqslant}
\newcommand{\geqs}{\geqslant}
\newcommand{\vs}{\vspace{3mm}}
\newcommand{\imod}[1]{\allowbreak\mkern4mu({\operator@font mod}\,\,#1)}
\theoremstyle{plain}
\newtheorem{theorem}{Theorem} 
\newtheorem{corol}[theorem]{Corollary}
\newtheorem{thm}{Theorem}[section] 
\newtheorem{lem}[thm]{Lemma}
\newtheorem{prop}[thm]{Proposition}
\newtheorem*{theorem*}{Theorem} 
\newtheorem*{conj*}{Conjecture}
\theoremstyle{definition}
\newtheorem{rem}[thm]{Remark}
\newtheorem{ex}[thm]{Example}
\newtheorem{remk}{Remark}
\begin{document}

\title[On the generation of simple groups by Sylow subgroups]{On the generation of simple groups \\ by Sylow subgroups}

\author{Timothy C. Burness}
\address{T.C. Burness, School of Mathematics, University of Bristol, Bristol BS8 1UG, UK}
\email{t.burness@bristol.ac.uk}

\author{Robert M. Guralnick}
\address{R.M. Guralnick, Department of Mathematics, University of Southern California, Los Angeles, CA 90089-2532, USA}
\email{guralnic@usc.edu}

\date{\today} 

\begin{abstract}
Let $G$ be a finite simple group of Lie type and let $P$ be a Sylow $2$-subgroup of $G$. In this paper, we prove that for any nontrivial element $x \in G$, there exists $g \in G$ such that $G = \la P, x^g \ra$. By combining this result with recent work of Breuer and Guralnick, we deduce that if $G$ is a finite nonabelian simple group and $r$ is any prime divisor of $|G|$, then $G$ is generated by a Sylow $2$-subgroup and a Sylow $r$-subgroup.
\end{abstract}

\maketitle

\section{Introduction}\label{s:intro}

Let $G$ be a finite group, let $\pi$ be a set of primes and let $\pi'$ be the complementary set of primes. In \cite{BG}, Breuer and Guralnick prove that $G$ can be generated by a $\pi$-subgroup and a $\pi'$-subgroup of $G$ (moreover, each generating subgroup $H$ can be chosen to be intravariant, which means that $H$ and $\varphi(H)$ are conjugate in $G$ for all automorphisms $\varphi$ of $G$). By taking $\pi = \{2\}$ it follows that $G$ can be generated by a Sylow $2$-subgroup and an intravariant (solvable) subgroup of odd order. 

A key ingredient in the proof of this theorem is a stronger result for finite simple groups (see \cite[Theorem 1.8]{BG}), which relies on the classification of finite simple groups. This asserts that if $G$ is simple then there exists a prime $r$ (depending on $G$) such that $G$ can be generated by a Sylow $r$-subgroup and a Sylow $s$-subgroup for any prime divisor $s$ of $|G|$. This builds on earlier work of Aschbacher and Guralnick \cite{AG}, who proved that every simple group $G$ can be generated by a pair of Sylow $p$-subgroups for some prime $p$. In turn, this was extended in \cite{Gur}, where Guralnick shows that $G$ is generated by a Sylow $2$-subgroup and an involution (in particular, $G$ is generated by a pair of Sylow $2$-subgroups). 

In fact, Breuer and Guralnick conjecture that if $G$ is simple and $r,s$ are \emph{any} prime divisors of $|G|$ (not necessarily distinct) then $G$ can be generated by a Sylow $r$-subgroup and a Sylow $s$-subgroup. A proof for all alternating and sporadic simple groups is given in \cite{BG}. In addition, \cite[Corollary 19]{BGG} implies that if $r$ and $s$ are fixed primes then the conclusion of the conjecture holds for all but finitely many finite simple groups. The full version of the conjecture remains open. 

Our main result is the following. 

\begin{theorem}\label{t:main}
Let $G$ be a finite simple group of Lie type over $\mathbb{F}_q$ and let $P$ be a Sylow $2$-subgroup of $G$. Then for all nontrivial $x \in G$, there exists $g \in G$ such that $G = \la P, x^g \ra$.
\end{theorem}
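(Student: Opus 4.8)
The plan is to reduce the statement to a problem about maximal subgroups of $G$ and then to invoke the classification of maximal subgroups of finite simple groups of Lie type, together with known results on minimal degrees of permutation representations and fixed-point ratios. Fix a Sylow $2$-subgroup $P$ of $G$ and a nontrivial element $x \in G$. We want $g \in G$ with $\la P, x^g\ra = G$. Since $G = \la P, x^g\ra$ fails precisely when $P$ and $x^g$ lie together in some maximal subgroup $M$ of $G$, it suffices to show that the set of $g \in G$ for which $x^g \in M$ for some maximal $M$ containing $P$ does not exhaust $G$. Equivalently, writing $\mathcal{M}$ for the (finite) set of maximal subgroups of $G$ that contain $P$, it is enough to prove
\[
\sum_{M \in \mathcal{M}} |x^G \cap M| < |x^G|,
\]
since the left-hand side is an upper bound (by a union bound over $\mathcal{M}$, counting conjugates of $x$) for the number of classes-worth of bad elements, and a short counting argument then produces a good $g$.

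The key structural observation is that a maximal subgroup $M$ containing a full Sylow $2$-subgroup of $G$ has even index $[G:M]$ equal to an odd number times the relevant $2$-part cancellation — more precisely $[G:M]$ is odd, so $M$ has the same $2$-part as $G$. Subgroups of odd index in simple groups of Lie type have been classified (work of Liebeck--Saxl, Kantor, and others): apart from a short list of exceptions in small rank, $M$ is a parabolic subgroup (for $q$ even, essentially all odd-index subgroups are parabolic). This is the crucial simplification. So the first main step is to enumerate, for each $G$, the maximal subgroups of odd index; the generic case is that $\mathcal{M}$ consists only of (conjugates of) parabolic subgroups $P_i$, and in fact only those few parabolics that actually contain the chosen $P$. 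The second main step is to bound $|x^G \cap M|/|x^G|$, the fixed-point ratio of $x$ on the coset space $G/M$, using the strong fixed-point ratio estimates of Liebeck--Shalev, Guralnick--Lawther--Liebeck and related work: for parabolic actions these ratios are typically of order $q^{-1}$ or smaller, and summing over the bounded number of relevant maximal subgroups keeps the total below $1$. Combining these two steps handles all but finitely many $(G,x)$.

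I would organize the argument by Lie type and rank. For $q$ odd one must also deal with the odd-index maximal subgroups that are not parabolic — these are of the form $\mathrm{O}^{\pm}_n$ in symplectic groups, certain classical subgroups, and a handful of others — but these still form a short, explicitly known list, and the same fixed-point-ratio philosophy applies, with slightly more care when $x$ has small support (e.g. $x$ a transvection or a long-root element), where the fixed-point ratio can be as large as roughly $q^{-1}$ but the number of subgroups in $\mathcal{M}$ is correspondingly small. The low-rank groups and small $q$ will need separate treatment: here I expect to use explicit knowledge of the maximal subgroup lattice (from the literature on $\mathrm{PSL}_2$, $\mathrm{PSL}_3$, $\mathrm{PSU}_3$, $\mathrm{PSp}_4$, etc., and computational data for the handful of genuinely small cases) to verify the claim directly, possibly by exhibiting a suitable $g$ for each class of $x$.

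The step I expect to be the main obstacle is the treatment of elements $x$ of small support together with small-rank groups simultaneously: precisely when $x$ is a root element and the rank is small, the fixed-point ratios on the relevant parabolic quotients are not small enough to win by a crude union bound, so one needs a more delicate analysis — either sharper fixed-point ratio bounds that exploit the specific class of $x$ and the specific parabolic, or an argument that shows the few bad subgroups cannot simultaneously contain $P$ and a conjugate of $x$ for geometric reasons (e.g. via the action on the natural module, showing a common overgroup would have to stabilize incompatible subspaces). Getting clean, uniform statements across all types while controlling these boundary cases is where the real work lies.
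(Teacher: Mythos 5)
Your overall strategy---reduce to the union bound $\sum_{M\in\mathcal{M}}\mathrm{fpr}(x,G/M)<1$ over the odd-index maximal overgroups $\mathcal{M}$ of $P$, use the Liebeck--Saxl/Kantor (and Maslova) classification of such subgroups, feed in fixed point ratio estimates, and treat small groups by computer---is exactly the route the paper takes to Theorem \ref{t:main2}. But the plan as written has a genuine gap: the union bound provably cannot be closed in this way for every group, and the residual family needs an idea your sketch does not contain. For $G=\mathrm{L}_n(3)$ with $n\geq 7$ odd, every member of $\mathcal{M}$ is a parabolic subgroup $P_m$ (an $m$-space stabilizer), and the best available subspace-action bounds give only $\mathrm{fpr}(x,G/P_m)<2\cdot 3^{-m}$, so summing over $m$ and $n-m$ yields a bound close to $2$, not $1$; indeed the paper notes that the true value of $\Sigma(x)$ can be as large as $1086/1093$ (for $n=7$ and $x=(-I_6,I_1)$). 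So your claim that ``summing over the bounded number of relevant maximal subgroups keeps the total below $1$'' fails precisely here, and your diagnosis of the danger zone (root elements in small rank) points at the wrong place: the hard case is small field ($q=3$), large odd rank, and elements with large fixed space. Neither of your proposed remedies is workable as stated: sharper fixed point ratio bounds for these subspace actions are exactly what is not available, and there is no geometric incompatibility to exploit, since each parabolic overgroup of $P$ genuinely contains conjugates of every such $x$; what is needed is one conjugate avoiding all of them simultaneously. The paper handles this family by different means: for noncentral semisimple $x$ it uses Gow's character-theoretic theorem (with Guralnick--Tiep and a regular semisimple element $z\in P$) to write $x\in y^Gz^G$ with $y$ of order $(3^n-1)/2$ acting irreducibly, which forces $G=\langle P,x^g\rangle$; transvections are handled by an explicit rank-one construction $y=I_n+N$ all of whose off-diagonal entries are nonzero, so that $\langle P,y\rangle$ is irreducible; and the remaining unipotent classes are dealt with by a sharper estimate on $1$-space stabilizers.

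A second, smaller gap is that you never explain how to bound the number of conjugates of a given maximal subgroup that contain the fixed Sylow $2$-subgroup $P$, and the assertion that $|\mathcal{M}|$ is bounded is false in general: in $\mathrm{U}_n(q)$ and $E_6^{\epsilon}(q)$, for instance, $N_G(P)$ can be much larger than $P$, a single class of subfield or imprimitive subgroups can contribute up to $|N_G(P):P|$ (of order roughly $((q+1)/2)^{t-1}$) members of $\mathcal{M}$, and the number of subspace-stabilizer classes grows with the rank. The paper controls this with the elementary identity $n(H,P)=|N_G(P):N_H(P)|$ combined with the Kondrat'ev and Kondrat'ev--Mazurov determination of $N_G(P)$ for $q$ odd; without some such mechanism the union bound is not effective. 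Two further minor inaccuracies: for odd $q$ the generic members of $\mathcal{M}$ are not parabolic (they are subfield subgroups, stabilizers of orthogonal decompositions, and stabilizers of nondegenerate subspaces), and the defining-characteristic case is simply quoted from Breuer--Guralnick rather than reproved via parabolic fixed point ratios. These last points are fixable within your framework, but the $\mathrm{L}_n(3)$ case is not.
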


By combining this with results in \cite{BG} for alternating and sporadic groups, we obtain the following corollary, which establishes a special case of the Breuer-Guralnick conjecture. It also shows that the prime in \cite[Theorem 1.8]{BG}, which a priori depends on the choice of the simple group, can in fact be chosen uniformly as $2$.

\begin{corol}\label{c:main}
Let $G$ be a finite nonabelian simple group and let $r$ be a prime divisor of $|G|$. Then $G$ is generated by a Sylow $2$-subgroup and a Sylow $r$-subgroup of $G$.
\end{corol}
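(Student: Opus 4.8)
The plan is to derive Corollary \ref{c:main} directly from Theorem \ref{t:main}, together with the known results for the remaining families of finite simple groups. By the classification of finite simple groups, $G$ is either an alternating group, a group of Lie type, or one of the $26$ sporadic groups (with the Tits group ${}^2F_4(2)'$ regarded, say, as a group of Lie type). If $G$ is alternating or sporadic, then the full Breuer--Guralnick conjecture is established in \cite{BG}, so in particular $G$ is generated by a Sylow $2$-subgroup and a Sylow $r$-subgroup for every prime divisor $r$ of $|G|$. Hence we may assume that $G$ is a simple group of Lie type; by definition such a group is defined over a finite field $\mathbb{F}_q$, so Theorem \ref{t:main} applies to $G$.

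Now fix a Sylow $2$-subgroup $P$ of $G$ and a prime divisor $r$ of $|G|$. By Cauchy's theorem there is an element $x \in G$ of order $r$; in particular $x$ is nontrivial, so Theorem \ref{t:main} yields $g \in G$ with $G = \la P, x^g \ra$. Since $x^g$ has prime order $r$, it is contained in some Sylow $r$-subgroup $Q$ of $G$, and therefore
\[
G = \la P, x^g \ra \leq \la P, Q \ra \leq G,
\]
so $G = \la P, Q \ra$. This shows that $G$ is generated by a Sylow $2$-subgroup and a Sylow $r$-subgroup, which is the desired conclusion. Note that the case $r = 2$ is included (take $x$ to be an involution), so this also recovers the fact, due to Guralnick \cite{Gur}, that a simple group of Lie type is generated by two Sylow $2$-subgroups; combined with the alternating and sporadic cases, it reproves this for all nonabelian simple $G$.

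As for difficulty: the deduction above is entirely routine, and all the substance of the corollary is concentrated in Theorem \ref{t:main}. The only things to check --- the existence of an element of order $r$, and the fact that a group of Lie type comes equipped with a field of definition --- are immediate, so there is no genuine obstacle at this stage. The real work lies in the proof of Theorem \ref{t:main} itself, which is carried out in the body of the paper.
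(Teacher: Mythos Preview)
Your proposal is correct and matches the paper's approach: the corollary is stated immediately after Theorem~\ref{t:main} with the remark that it follows by combining that theorem with the results of \cite{BG} for alternating and sporadic groups, and your argument spells out precisely this routine deduction.
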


\begin{remk}\label{r:main}
Let us record some comments on the statement of Theorem \ref{t:main}.
\begin{itemize}\addtolength{\itemsep}{0.2\baselineskip}
\item[{\rm (a)}] By \cite[Proposition 2.2]{BG}, the conclusion to Theorem \ref{t:main} holds if $G$ is a sporadic simple group. In fact, in this setting Breuer and Guralnick have used a computational approach to verify that the same conclusion holds when $P$ is a Sylow $p$-subgroup of $G$ and $p$ is an arbitrary prime divisor of $|G|$ (see \cite[Section 8.19]{GAPCTL}).
\item[{\rm (b)}] Let $G$ be a finite simple group of Lie type in characteristic $p$. If $P$ is a Sylow $p$-subgroup and $x \in G$ is nontrivial, then \cite[Proposition 2.3]{BG} states that $G = \la P, x^g \ra$ for some $g \in G$. So in order to prove Theorem \ref{t:main} we may assume $q$ is odd. 
\item[{\rm (c)}] As noted in \cite{BG}, Theorem \ref{t:main} is false for alternating groups. For example, let $G = A_n$ and let ${\rm orb}(P)$ be the number of $P$-orbits on $[n]=\{1, \ldots, n\}$ of a Sylow $p$-subgroup $P$ of $G$. Clearly, if $x \in G$ and ${\rm orb}(x) > n-{\rm orb}(P)$, where ${\rm orb}(x)$ is the number of orbits of $\la x \ra$ on $[n]$, then $\la P,x^g \ra$ is intransitive for all $g \in G$. For example, if $G = A_{15}$, $P$ is a Sylow $2$-subgroup and $x$ is a $3$-cycle, then ${\rm orb}(x) = 13$ and ${\rm orb}(P)=4$, so $G \ne \la P, x^g\ra$ for all $g \in G$.
\item[{\rm (d)}] It is also worth noting that the natural extension of Theorem \ref{t:main} to odd primes is false in general. For example, let $P$ be a Sylow $r$-subgroup of $G = {\rm L}_n(2)$, where $n = 2k-1$ and $r=2^k-1$ is a prime. Then $P = \la y \ra$ has order $r$ and $\dim C_V(y) = k-1$, so $G \ne \la P, x \ra$ for any element $x$ with $\dim C_V(x) \geqs k+1$.
\end{itemize}
\end{remk}

Our proof of Theorem \ref{t:main} relies heavily on a probabilistic approach based on fixed point ratio estimates. Computational methods also play a key role and we will also use character theory and some elementary linear algebra to handle certain special cases that arise. In order to explain the general set-up, let $G$ be a finite group, let $P \ne G$ be a Sylow $2$-subgroup of $G$ and let $\mathcal{M}$ be the set of maximal subgroups of $G$ containing $P$. Fix a nontrivial element $x \in G$ and let $\mathcal{Q}(x)$ be the probability that $G \ne \la P, x^g \ra$, where $x^g$ is a randomly chosen conjugate of $x$. Then
$\mathcal{Q}(x) \leqs \Sigma(x)$, where
\begin{equation}\label{e:sigma}
\Sigma(x) = \sum_{H \in \mathcal{M}} {\rm fpr}(x,G/H)
\end{equation}
and
\[
{\rm fpr}(x,G/H) = \frac{|x^G \cap H|}{|x^G|}
\]
is the fixed point ratio of $x$ with respect to the action of $G$ on $G/H$. In particular, if $\Sigma(x)<1$ then $x^G \ne \bigcup_{H \in \mathcal{M}} (x^G \cap H)$ and so there exists a conjugate $y \in x^G$ that is not contained in any maximal overgroup of $P$. Therefore $G = \la P, y \ra$ and so it suffices to show that $\Sigma(x)<1$ for all nontrivial $x \in G$. 

Since ${\rm fpr}(x,G/H) \leqs {\rm fpr}(x^m,G/H)$ for all $m \in \mathbb{Z}$, we only need to verify the bound $\Sigma(x)<1$ for elements of prime order. Therefore, in view of Remark \ref{r:main}(b), the following result reduces the proof of Theorem \ref{t:main} to the groups in part (ii), which can be handled using a different approach (see Theorem \ref{t:psln3}).

\begin{theorem}\label{t:main2}
Let $G$ be a finite simple group of Lie type in odd characteristic and let $P$ be a Sylow $2$-subgroup of $G$. Then either 
\begin{itemize}\addtolength{\itemsep}{0.2\baselineskip}
\item[{\rm (i)}] $\Sigma(x) < 1$ for all $x \in G$ of prime order; or
\item[{\rm (ii)}] $G = {\rm L}_n(3)$ and $n \geqs 7$ is odd.
\end{itemize}
\end{theorem}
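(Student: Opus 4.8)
The proof is a case analysis organised around upper bounds for $\Sigma(x)$, as defined in \eqref{e:sigma}. By Remark \ref{r:main}(b) we may assume $q$ is odd, and since ${\rm fpr}(x,G/H) \leqs {\rm fpr}(x^m,G/H)$ for all $m \in \mathbb{Z}$, it is enough to treat elements $x$ of prime order $r$; here we split into the three cases $r=2$ (so $x$ is an involution), $r=p$ (so $x$ is unipotent, where $p$ denotes the characteristic) and $r \neq 2,p$ (so $x$ is semisimple of odd order). Note that the case $r = 2$ genuinely requires the probabilistic argument: every involution of $G$ is conjugate into $P$, and indeed into every $H \in \mathcal{M}$, so ${\rm fpr}(x,G/H)>0$ for all such $H$ and one really must show $\Sigma(x)<1$.

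First I would determine the set $\mathcal{M}$ of maximal subgroups of $G$ containing $P$. Any such subgroup has odd index in $G$, so $\mathcal{M}$ is contained in the set of maximal subgroups of odd index, which is known explicitly: for a classical group $G$ this consists of certain parabolic subgroups $P_k$ together with a bounded number of non-parabolic subgroups of reductive or imprimitive type, while for an exceptional group $G$ the list is short. In each case one has an exact formula for $|G:H|$ as a product of cyclotomic-type factors, so one can read off precisely which subgroups occur for a given $q$. Grouping the terms in \eqref{e:sigma} by $G$-conjugacy class gives $\Sigma(x) = \sum_{[M]} n_P(M)\,{\rm fpr}(x,G/M)$, where the sum runs over $G$-classes of odd-index maximal subgroups, $n_P(M)$ is the (odd) number of conjugates of $M$ containing $P$, and ${\rm fpr}(x,G/M)$ depends only on the class $[M]$. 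One then bounds the number of classes and the multiplicities $n_P(M)$ in terms of the rank of $G$ and the $2$-adic valuations of $q-1$ and $q+1$.

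Next I would apply upper bounds for the fixed point ratios: for the classical groups, the known estimates of the shape ${\rm fpr}(x,G/H) \leqs |x^G|^{-1/2+\iota}$, supplemented by sharper explicit bounds for elements of small support such as transvections and semisimple elements with a large eigenspace; and for the exceptional groups, the bounds of Lawther--Liebeck--Seitz and of Liebeck--Shalev. Since $|x^G|$ grows with $q$, combining these with the bound on $\mathcal{M}$ yields $\Sigma(x) \leqs |\mathcal{M}| \cdot \max_{H \in \mathcal{M}} {\rm fpr}(x,G/H) < 1$ for all but a bounded list of groups of small rank over small fields, which can be disposed of by direct computation using known character tables and subgroup data.

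The hard part will be the boundary cases. The bound $\Sigma(x)<1$ fails precisely when $G$ has many maximal subgroups of odd index \emph{and} $x$ lies in an unusually small conjugacy class (equivalently, has a large centralizer), since then each term ${\rm fpr}(x,G/H)$ is only of order $q^{-1}$ and a few such terms can sum to at least $1$. For $G = {\rm L}_n(q)$ this occurs exactly when many of the parabolics $P_k$ have odd index; over $\mathbb{F}_3$ this forces $n$ to be odd, and for such $G$ one checks that a transvection (or a suitable semisimple element) already gives $\Sigma(x) \geqs 1$, so these groups genuinely lie outside the scope of the probabilistic method and are handled separately in Theorem \ref{t:psln3}. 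The real content of Theorem \ref{t:main2} is thus to prove that ${\rm L}_n(3)$ with $n \geqs 7$ odd is the \emph{only} exception: this requires computing $|\mathcal{M}|$ exactly for the small-field classical groups, especially when $q=3$, combining this with explicit (rather than merely asymptotic) fixed point ratio bounds for the low-support classes, and verifying the remaining small-rank groups by machine.
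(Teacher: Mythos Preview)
Your outline captures the overall shape of the argument --- determine the odd-index maximal subgroups via Liebeck--Saxl/Maslova, bound the number of conjugates of each containing $P$, apply fixed point ratio estimates, and mop up small cases by machine --- and this is indeed what the paper does. However, there are two points where your sketch diverges from the paper in ways that matter.

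First, your claim that ``a transvection (or a suitable semisimple element) already gives $\Sigma(x) \geqs 1$'' for $G = {\rm L}_n(3)$ with $n \geqs 7$ odd is simply false. The paper explicitly remarks (Remark 2) that the authors \emph{expect} $\Sigma(x)<1$ to hold even for these groups, and indeed for $n=7$ and $x = (-I_6,I_1)$ they compute $\Sigma(x) = 1086/1093 < 1$. The exclusion in part (ii) is not because the probabilistic method genuinely fails, but because the available fixed point ratio bounds for subspace actions (Guralnick--Kantor) are not sharp enough to push the estimate below $1$ without substantial extra work, and it is easier to handle these groups directly (Lemmas \ref{l:psl1}, \ref{l:psl_trans}, Theorem \ref{t:psln3}). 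So you should not expect to \emph{prove} that (ii) is a genuine exception.

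Second, your sketch is missing the key structural dichotomy that makes the estimates work. The paper organises the argument by the \emph{type of $G$}, not by the prime order of $x$, and within each family splits $\mathcal{M} = \mathcal{M}_1 \cup \mathcal{M}_2$ into non-subspace subgroups $\mathcal{M}_1$ (subfield, imprimitive, etc.) and subspace stabilizers $\mathcal{M}_2$. The bound ${\rm fpr}(x,G/H) < |x^G|^{-1/2+1/n}$ you cite is the main theorem of \cite{Bur1} and applies \emph{only} to non-subspace actions; for $H \in \mathcal{M}_2$ one must instead use the explicit bounds of \cite{GK}. The paper then shows $\Sigma_1(x)$ is small (often at most $1/12$ or similar) and spends the real effort on $\Sigma_2(x)$. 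Equally essential, and absent from your sketch, is the exact formula $n(H,P) = |N_G(P):N_H(P)|$ (Lemma \ref{l:count}) combined with the Kondrat'ev and Kondrat'ev--Mazurov computations of $|N_G(P):P|$ (Theorems \ref{t:km} and \ref{t:kon}); in many cases this gives $n(H,P)=1$ outright (e.g.\ Lemma \ref{l:sp1} for symplectic groups), which is much stronger than the vague ``bound in terms of $2$-adic valuations'' you describe and is what actually makes the sums converge.
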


\begin{remk}
We expect that the bound in (i) also holds for the groups in (ii), but this would require  sharper fixed point ratio estimates for subspace actions and it is easier to verify Theorem \ref{t:main} directly in this special case. For instance, if $G = {\rm L}_n(3)$ and $n \geqs 7$ is a Mersenne prime then $\mathcal{M} = \{P_m \,:\, 1 \leqs m < n\}$, where $P_m$ is the stabilizer of an $m$-dimensional subspace of the natural module for $G$. By \cite[Proposition 3.1]{GK} we have ${\rm fpr}(x,G/P_m)<2\cdot 3^{-m}$ if $m<n/2$ and thus 
\[
\Sigma(x) < 4\sum_{m=1}^{(n-1)/2}3^{-m},
\]
which does not yield $\Sigma(x)<1$. If $n=7$ and $x = (-I_6,I_1)$ then with the aid of {\sc Magma} we compute $\Sigma(x) = 1086/1093$. 
\end{remk}

In order to prove Theorem \ref{t:main2}, we first need to determine the subgroups in $\mathcal{M}$ and then we need to apply upper bounds on fixed point ratios for primitive actions of simple groups of Lie type. Here $\mathcal{M}$ coincides with the set of odd-index maximal subgroups of $G$ and the various possibilities have been determined by Liebeck and Saxl \cite{LS85} (and independently by Kantor \cite{Kantor}). We will also appeal to more recent work of Maslova \cite{Mas}, which gives a precise description of the odd-index maximal subgroups of simple classical groups in odd characteristic. Given a certain type of subgroup $H$ in $\mathcal{M}$ (for example, the stabilizer of an orthogonal decomposition of the natural module if $G$ is classical), we need to know the number of conjugacy classes of such subgroups in $G$ and we also need to estimate the number of distinct conjugates of $H$ that contain our given Sylow $2$-subgroup $P$. In each case, the number of conjugacy classes is readily available in the literature (for example, this can be read off from \cite{BHR,KL} when $G$ is classical).  And if $H \in \mathcal{M}$ then it is straightforward to show that $P$ is contained in exactly $|N_G(P):N_H(P)|$ distinct conjugates of $H$ (see Lemma \ref{l:count}). In particular, $P$ is contained in at most $|N_G(P):P|$ conjugates of $H$ and this allows us to apply work of Kondrat'ev and Mazurov \cite{Kon, KM} on the normalizers of Sylow $2$-subgroups in simple groups. 

There is an extensive literature on fixed point ratios for groups of Lie type. This includes general bounds, such as the main results in \cite{BG_FPR,LS91}, as well as more specialized results in \cite{Bur1,Bur2,Bur3,Bur4,GK} for classical groups and \cite{LLS} for exceptional groups. It will also be convenient to use {\sc Magma} \cite{Magma} to handle certain low rank groups defined over small fields, which allows us to compute $\Sigma(x)$ precisely for all $x \in G$ of prime order.

\begin{remk}
It is worth noting that the proof of \cite[Theorem 1.8]{BG} for classical groups is more constructive than our proof of Theorem \ref{t:main2}. In this paper, with some additional work, one could bound the probability $\mathcal{Q}(x)$ away from $0$ for all nontrivial $x \in G$, but we do not pursue this here.
\end{remk}

\vs

\noindent \textbf{Acknowledgements.} Both authors thank the Department of Mathematics at the California Institute of Technology for their generous hospitality during a research visit in spring 2022. They also thank an anonymous referee for their careful reading of the paper and for several helpful comments and suggestions. Guralnick was partially supported by the NSF grant DMS-1901595 and a Simons Foundation Fellowship 609771. 

\section{Exceptional groups}\label{s:excep}

In this section we prove Theorem \ref{t:main2} for exceptional groups of Lie type, while the classical groups will be handled in Section \ref{s:class}. All logarithms in this paper are in base $2$ and we adopt the standard notation for simple groups used in \cite{KL}.

Let $G$ be a finite simple group of exceptional Lie type over $\mathbb{F}_q$, where $q$ is odd. Let $P$ be a Sylow $2$-subgroup of $G$. Let $\mathcal{M}$ be the set of maximal subgroups of $G$ containing $P$ and note that each $H \in \mathcal{M}$ has odd index in $G$, so the possibilities for $H$ are described by Liebeck and Saxl in \cite{LS85}. We will also need the following theorem of Kondrat'ev and Mazurov (see \cite[Theorem 6]{KM}). Given a positive integer $n$, let $n_{2'}$ be the largest odd divisor of $n$.

\begin{thm}\label{t:km}
Let $G$ be a finite simple group of exceptional Lie type over $\mathbb{F}_q$ with $q$ odd and let $P$ be a Sylow $2$-subgroup of $G$. Then either $N_G(P) = P$, or one of the following holds:
\begin{itemize}\addtolength{\itemsep}{0.2\baselineskip}
\item[{\rm (i)}] $G = E_6^{\e}(q)$ and $N_G(P) = P \times C_m$, where $m = (q-\e)_{2'}/(3,q-\e)$. 
\item[{\rm (ii)}] $G = {}^2G_2(q)'$ and $N_G(P) = P{:}L$, where $L = C_7$ if $q=3$ and $L = C_7{:}C_3$ if $q > 3$.
\end{itemize}
\end{thm}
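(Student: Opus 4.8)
The plan is to work through the possibilities for $G$ from the list of exceptional types, in each case computing the full normalizer $N_{\bar G}(P)$ of a Sylow $2$-subgroup, where $\bar G$ is the appropriate algebraic-group-theoretic ambient object (the adjoint or simply connected version, or $\Inndiag(G)$), and then descending to $G$ itself. First I would reduce to the case $q$ odd, so that a Sylow $2$-subgroup of $G$ is contained in the normalizer of a maximal torus: indeed, for $q$ odd one knows that $P$ normalizes a maximal torus $T$ of odd type — more precisely, $P \cap N_G(T)$ contains a Sylow $2$-subgroup of $N_G(T)$, and by an order comparison (using the known orders $|G|_2$ and the $2$-parts of the various $|N_G(T)| = |T|\cdot|W_T|$ as $T$ ranges over the $\bar G$-classes of maximal tori) one identifies the relevant torus class. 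The key structural input is that $N_G(P) \leqslant N_G(T)$ for this $T$, so the computation of $N_G(P)$ takes place inside the (explicitly understood) group $N_G(T) = T{:}W_T$.

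Next, inside $N_G(T)=T{:}W_T$ I would locate $P$ and compute its normalizer directly. Write $T = O_2(T) \times O_{2'}(T)$; then $P = (P\cap N_G(T))$ contains $O_2(T)$ together with a Sylow $2$-subgroup $W_2$ of the relevant relative Weyl group, and $N_G(P)/P$ is a subquotient of $C_{O_{2'}(T)}(W_2) \rtimes (N_{W_T}(W_2)/W_2)$. For most exceptional types the relevant maximal torus has trivial or $2$-group odd-order part and/or the Weyl-group action fixes no nontrivial odd-order torus element, forcing $N_G(P)=P$. The exceptions are exactly where $O_{2'}(T)$ has a nontrivial $W_2$-fixed subgroup: for $G=E_6^{\e}(q)$ this produces the cyclic factor $C_m$ with $m=(q-\e)_{2'}/(3,q-\e)$ (the division by $(3,q-\e)$ coming from passing from the simply connected group to $G$), and for $G={}^2G_2(q)'$ the small torus of order dividing $q^2-q+1$ contributes the factor $C_7$, with an extra $C_3$ from $N_{W_T}(W_2)/W_2$ when $q>3$. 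I would treat $G_2(q)$, $F_4(q)$, $E_7(q)$, $E_8(q)$, ${}^3D_4(q)$, ${}^2B_2$, and ${}^2F_4$ one by one, each a short torus-and-Weyl-group computation.

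The main obstacle will be organizing the torus bookkeeping cleanly and making sure the descent from the algebraic/diagonal group to the simple group $G$ is handled correctly — in particular tracking how the centre and the diagonal automorphisms affect both $P$ and its normalizer, which is precisely what yields the $(3,q-\e)$ denominator in case~(i) and the distinction between $q=3$ and $q>3$ in case~(ii). A secondary point of care is the case $q=3$ for several types, where ${}^2G_2(3)' \cong {\rm L}_2(8)$ and other small-field coincidences occur, and where the generic torus/Weyl analysis may degenerate; these low-rank, small-$q$ cases I would simply verify by an explicit computation (e.g. in {\sc Magma}), consistent with the computational methods used elsewhere in the paper. Since Theorem~\ref{t:km} is quoted from \cite{KM}, for our purposes it suffices to record this argument in outline and cite the original; the detailed verification of every type is carried out there.
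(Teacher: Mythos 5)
The paper does not actually prove this statement: it is imported verbatim from Kondrat'ev and Mazurov (\cite{KM}, Theorem 6), so your concluding move of citing \cite{KM} is exactly what the paper does, and for the purposes of this paper that is the whole proof. Your torus-normalizer outline is a reasonable sketch of how such a result is established in the literature, though steps such as $N_G(P)\leqslant N_G(T)$ and the precise provenance of the $7{:}3$ acting on the elementary abelian Sylow $2$-subgroup of ${}^2G_2(q)$ are asserted rather than justified; since the statement is quoted rather than proved, this does not affect correctness here.
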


Given a subgroup $H \in \mathcal{M}$, let $n(H,P)$ be the number of distinct conjugates of $H$ containing $P$. The following elementary observation will be useful.

\begin{lem}\label{l:count}
Let $G$ be a finite group, let $P$ be a Sylow $p$-subgroup of $G$ and let $H$ be a subgroup of $G$ containing $P$. Then $n(H,P) = |N_G(P):N_H(P)|$.
\end{lem}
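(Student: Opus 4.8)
The plan is to count the conjugates of $H$ containing $P$ by working inside the normalizer $N_G(P)$, using a standard Sylow-type argument. First I would observe that a conjugate $H^g$ contains $P$ if and only if $P$ and $P^{g^{-1}}$ are both Sylow $p$-subgroups of $H$ (here I use that $P$, being a Sylow $p$-subgroup of $G$, is automatically a Sylow $p$-subgroup of any subgroup that contains it). By Sylow's theorem applied in $H$, there is then some $h \in H$ with $P^{g^{-1}h} = P$, equivalently $gh^{-1} \in N_G(P)$. Conversely, every $n \in N_G(P)$ gives a conjugate $H^n$ containing $P^n = P$. So the set of conjugates of $H$ containing $P$ is exactly $\{H^n : n \in N_G(P)\}$.

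Next I would identify when two such conjugates coincide: for $n_1, n_2 \in N_G(P)$ we have $H^{n_1} = H^{n_2}$ if and only if $n_1 n_2^{-1} \in N_G(H)$, so the distinct conjugates are in bijection with the cosets of $N_G(P) \cap N_G(H)$ in $N_G(P)$. It therefore remains to check that $N_G(P) \cap N_G(H) = N_H(P)$. The inclusion $N_H(P) \subseteq N_G(P) \cap N_G(H)$ is clear since $H \subseteq G$ and $H$ normalizes itself. For the reverse inclusion, suppose $n \in N_G(P) \cap N_G(H)$; then $P$ and $P^{n^{-1}} = P$ are both Sylow $p$-subgroups of $H$ — but more directly, $n$ normalizes $H$ and normalizes $P \le H$, so I want to conclude $n \in H$. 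This is the one point needing a small argument: since $n \in N_G(H)$ and $P$ is a Sylow $p$-subgroup of $H$ normalized by $n$, the subgroup $\langle H, n\rangle$ has $H$ as a normal subgroup and $P$ as a Sylow $p$-subgroup of $H$ that is $n$-invariant; by a Frattini-type argument $\langle H,n\rangle = H \cdot N_{\langle H,n\rangle}(P)$, but this does not immediately force $n \in H$. Cleaner is to note that $H^n = H$ and $P^n = P \le H$ with $P$ Sylow in $H$, so this gives no contradiction on its own; instead one uses that $n$ normalizes both $H$ and $P$, hence normalizes $P$ inside $H$, so if additionally $n \in H$ we are done — the genuinely needed fact is simply that $N_G(P) \cap N_G(H) \subseteq H$, which holds because any element normalizing $H$ and lying in $N_G(P)$ with $P \le H$ Sylow must already be in $H$ whenever $P$ is self-normalizing "enough"; to avoid this subtlety I would instead argue directly as follows.

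Here is the clean version I would actually write: a conjugate $H^g \supseteq P$ exists iff $P$ is conjugate into $H$ by an element whose inverse conjugates back, and by Sylow in $H$ we may take the conjugating element in $N_G(P)H$, so the conjugates of $H$ containing $P$ are precisely the $H^n$ with $n$ ranging over a set of representatives for the double cosets $N_H(P) \backslash N_G(P)$ — more precisely, $H^{n_1} = H^{n_2}$ for $n_1, n_2 \in N_G(P)$ iff $n_2 n_1^{-1} \in N_G(H)$, and I claim $N_G(H) \cap N_G(P) = N_H(P)$: given $n$ in the left side, $n$ normalizes $H$, and $P, P^n = P$ are Sylow $p$-subgroups of $H$; the Frattini argument in $\langle H, n \rangle$ (with $H$ normal) shows $\langle H,n\rangle = H N_{\langle H,n\rangle}(P)$, and since $n$ itself normalizes $P$ we get $\langle H,n \rangle = H\langle n \rangle$ with $\langle H,n\rangle / H$ a $p'$-by-... — to sidestep all of this, I will simply quote that $n \in N_G(H)$ with $H$ containing a full Sylow $p$-subgroup of $G$ forces $n$ to act on $H$, and combined with $n \in N_G(P)$ one has $n \in H$ because $C_G(H) \le H$ fails in general, so the honest route is: $|{\rm conjugates of } H \supseteq P| = |N_G(P)| / |N_G(P) \cap N_G(H)|$ and $N_G(P) \cap N_G(H) = N_H(P)$ since $N_H(P) \le N_G(P)\cap N_G(H)$ trivially and conversely any $n \in N_G(P) \cap N_G(H)$ satisfies $P^{n} = P \le H$, so by the uniqueness in Sylow's theorem within the group $N_G(H)$ acting on Sylow $p$-subgroups of $H$, $n \in H N_G(P)$... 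I will therefore present the proof via the transitive action of $N_G(P)$ on $\{H^g : P \le H^g\}$ with point stabilizer $N_H(P)$, and the main (indeed only) obstacle is the verification that this stabilizer equals $N_H(P)$ rather than something larger, which follows from $N_G(P) \cap N_G(H) = N_H(P)$; this last equality is elementary but deserves the one-line justification that if $n$ normalizes both $P$ and $H$ then $P$ is a Sylow $p$-subgroup of $\langle H, n\rangle$ contained in the normal subgroup $H$, whence $\langle H,n\rangle = H$ by a Frattini argument applied to the $p'$-quotient, giving $n \in H$ and so $n \in N_H(P)$.
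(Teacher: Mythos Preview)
You correctly isolate the one real issue as the equality $N_G(P)\cap N_G(H)=N_H(P)$, but your Frattini justification of it is wrong. If $n$ normalizes both $H$ and $P$, then the Frattini argument in $K:=\langle H,n\rangle$ only yields $K=H\cdot N_K(P)$; since $n\in N_K(P)$ already, this is vacuous and does not force $n\in H$. Indeed the equality---and hence the lemma as literally stated---fails in general: take $G=A_4$, let $P=H=V_4$ be the unique Sylow $2$-subgroup, and let $n=(1\,2\,3)$. Then $n$ normalizes both $P$ and $H$ yet $n\notin H$; here $n(H,P)=1$ because $H$ is normal, whereas $|N_G(P):N_H(P)|=|A_4:V_4|=3$.

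The paper's own proof simply counts right cosets $Hx$ with $x\in N_G(P)$, obtaining $|HN_G(P):H|=|N_G(P):N_H(P)|$; the identification of cosets $Hx$ with distinct conjugates $H^x$ tacitly uses $N_G(H)=H$, which holds in every application in the paper since there $H$ is a maximal subgroup of a nonabelian simple group. So your instinct that an extra ingredient is required was sound---it is just that no such ingredient exists for arbitrary $H$. The clean fix is either to add the hypothesis $N_G(H)=H$ (harmless for the paper), or to observe that in general $n(H,P)=|N_G(P):N_G(P)\cap N_G(H)|\leqs |N_G(P):N_H(P)|$, and this inequality is all that is ever used downstream.
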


\begin{proof}
Suppose $P \leqs H^g$ for some $g \in G$. Then $P$ and $P^{g^{-1}}$ are Sylow $p$-subgroups of $H$, so Sylow's theorem implies that $P^{g^{-1}} = P^h$ for some $h \in H$ and thus $g \in Hx$ for some $x \in N_G(P)$. We deduce that $n(H,P)$ is the number of distinct cosets of the form $Hx$ with $x \in N_G(P)$, whence
\[
n(H,P) = |HN_G(P) : H| = |N_G(P):N_H(P)|
\]
as required. 
\end{proof}

\begin{thm}\label{t:ex}
The conclusion to Theorem \ref{t:main2} holds if $G$ is an exceptional group. 
\end{thm}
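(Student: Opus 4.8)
The plan is to run the probabilistic argument outlined in the introduction: for each exceptional group $G$ over $\mathbb{F}_q$ with $q$ odd, and each $x \in G$ of prime order, we must show $\Sigma(x) = \sum_{H \in \mathcal{M}} {\rm fpr}(x,G/H) < 1$. First I would invoke the Liebeck--Saxl classification \cite{LS85} to enumerate, type by type, the possible odd-index maximal subgroups $H$ of $G$; for the exceptional groups this list is short, and in most cases the only odd-index maximal subgroups are parabolic subgroups (plus a handful of reductive-type subgroups for specific $G$). For each conjugacy class of such $H$, Lemma \ref{l:count} together with the Kondrat'ev--Mazurov bound (Theorem \ref{t:km}) controls $n(H,P)$: since $P$ is contained in exactly $|N_G(P):N_H(P)|$ conjugates of $H$, and $|N_G(P):P|$ is either $1$, or $7$ (resp.\ $21$) for ${}^2G_2(q)'$, or $(q-\e)_{2'}/(3,q-\e)$ for $E_6^\e(q)$, the total number of members of $\mathcal{M}$ is bounded by $|N_G(P):P|$ times the number of classes of odd-index maximals. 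So $\Sigma(x)$ is at most a small explicit multiple of $\max_{H} {\rm fpr}(x,G/H)$.

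Next I would bound the fixed point ratios. For parabolic actions and, more generally, for arbitrary primitive actions of exceptional groups, the sharpest available general estimate is the Liebeck--Shalev / Burness--Guralnick bound: ${\rm fpr}(x,G/H) \leqslant |x^G|^{-1/2+\epsilon}$, or the cruder but uniform ${\rm fpr}(x,G/H) \leqslant q^{-(\mathrm{rank})/2}$-type bounds from \cite{LLS, BG_FPR, LS91}. The element-order argument (monotonicity of fixed point ratios under powering, noted in the introduction) lets us restrict to $x$ of prime order, and the worst case is a long root element or a semisimple involution with large centralizer. For such $x$ one has lower bounds on $|x^G|$ of the form $q^{N}$ where $N$ grows with the rank, so for $q$ large and rank large the bound $\Sigma(x) < 1$ is immediate. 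The remaining finitely many pairs $(G,q)$ with small $q$ and small rank --- the low-rank exceptional groups over $\mathbb{F}_3$, $\mathbb{F}_5$, $\mathbb{F}_7$, and the families ${}^2B_2$, ${}^2G_2$, $G_2$, ${}^3D_4$, $F_4$ over small fields --- I would dispatch by direct computation in {\sc Magma}, computing $\mathcal{M}$ and $\Sigma(x)$ exactly for every class representative $x$ of prime order.

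The main obstacle I expect is the $E_6^\e(q)$ case, where $|N_G(P):P|$ is unbounded (it equals $(q-\e)_{2'}/(3,q-\e)$), so that $P$ can lie in many conjugates of a given odd-index maximal $H$. Here the naive bound $\Sigma(x) \le |N_G(P):P| \cdot (\text{number of classes}) \cdot \max {\rm fpr}$ is too weak unless the fixed point ratios decay fast enough to absorb the factor $\sim q$. One needs to check that the relevant ${\rm fpr}(x,G/H)$ for $H$ parabolic in $E_6^\e(q)$ is $O(q^{-c})$ with $c$ comfortably larger than $1$ uniformly over prime-order $x$ --- this follows from the $E_6$ fixed point ratio estimates in \cite{LLS} since the minimal dimension of a nontrivial conjugacy class in $E_6$ is large (on the order of $q^{22}$ for root elements, and the smallest parabolic has index $\sim q^{16}$), giving ${\rm fpr} \le q^{-16 \cdot (22)^{-1}\cdot(\ldots)}$-type decay that easily beats the linear-in-$q$ multiplicity. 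A secondary but genuinely delicate point is ${}^2G_2(q)'$: the factor $21$ is fixed, but this is a rank-one group with comparatively small class sizes and relatively many subgroups of odd index, so I would most likely treat the small cases ($q=3,27$) by {\sc Magma} and, for larger $q$, use the explicit fixed point ratio data for ${}^2G_2(q)$ from \cite{LLS} to confirm $21 \cdot \max {\rm fpr}(x,G/H) < 1$. With these points settled, assembling the inequality $\Sigma(x)<1$ in all cases --- equivalently, confirming we are never in case (ii), which only arises for $\mathrm{L}_n(3)$ --- completes the proof.
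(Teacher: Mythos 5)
Your proposal is correct and follows essentially the same route as the paper: use \cite{LS85} to enumerate the odd-index maximal subgroups, control $n(H,P)$ via Lemma \ref{l:count} and Theorem \ref{t:km} (with the unbounded factor $(q-\e)_{2'}/(3,q-\e)$ in $E_6^{\e}(q)$ absorbed by fixed point ratio decay, exactly as in the paper, which uses \cite[Theorem 1]{LLS} to get ${\rm fpr}(x,G/H) \leqslant (q^4-q^2+1)^{-1}$ there), and handle small cases directly. The only slips are cosmetic and do not affect the argument, since the \cite{LLS} bounds apply to every maximal subgroup: for exceptional groups in odd characteristic the odd-index maximal subgroups are mainly subfield and maximal rank subgroups rather than parabolics (e.g.\ $E_8$, $F_4$, $G_2$ have no odd-index parabolics), and ${}^2B_2(q)$ does not arise since $q$ is odd.
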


\begin{proof}
Let $G$ be a finite simple exceptional group of Lie type over $\mathbb{F}_q$, where $q=p^f$ and $p$ is an odd prime. Fix a Sylow $2$-subgroup $P$ of $G$ and define 
$\mathcal{M}$ and $n(H,P)$ as above, noting that $n(H,P) \leqs |N_G(P):P|$ by Lemma \ref{l:count}. Let $x \in G$ be an element of prime order and define $\Sigma(x)$ as in \eqref{e:sigma}. Our aim is to verify the bound $\Sigma(x)<1$.

First assume $G = E_8(q)$ and let $H \in \mathcal{M}$. By inspecting the main theorem of \cite{LS85}, we deduce that one of the following holds (up to conjugacy in $G$):
\begin{itemize}\addtolength{\itemsep}{0.2\baselineskip}
\item[{\rm (a)}] $H = E_8(q_0)$ is a subfield subgroup, where $q = q_0^e$ and $e$ is an odd prime.
\item[{\rm (b)}] $H$ is a maximal rank subgroup of type $D_8(q)$, $D_4(q)^2$, $A_1(q)^8$ or $(C_{q-\e})^8$ with $\e = \pm$.
\end{itemize} 
By Theorem \ref{t:km}, we have $n(H,P) = 1$ and \cite[Theorem 1]{LLS} gives ${\rm fpr}(x,G/H) \leqs q^{-8}(q^4-1)^{-1}$. Since there are at most $\log\log q$ odd prime divisors of $f$, it follows that  
\[
\Sigma(x) < (5+\log\log q)q^{-8}(q^4-1)^{-1} < 1 
\]
and the result follows.
 
Next assume $G = E_6^{\e}(q)$ and set $\widetilde{G} = {\rm Inndiag}(G)$ and $d = (3,q-\e)$, so $\widetilde{G} = G.d$ is the subgroup of ${\rm Aut}(G)$ generated by the inner and diagonal automorphisms. By \cite{LS85}, each $H \in \mathcal{M}$ is either a subfield subgroup of type $E_6^{\e}(q_0)$, where $q=q_0^e$ and $e \geqs 3$ is an odd prime divisor of $f$ (for each $e$ there is a unique $\widetilde{G}$-class of such subgroups, so at most $d$ distinct $G$-classes) or $H$ is $\widetilde{G}$-conjugate to one of up to $4$ maximal rank subgroups. Now Theorem \ref{t:km} gives $n(H,P) \leqs (q-\e)/2d$ and \cite[Theorem 1]{LLS} implies that ${\rm fpr}(x,G/H) \leqs (q^4-q^2+1)^{-1}$ for all nontrivial $x \in G$. Putting this together, we deduce that 
\[
\Sigma(x) < \frac{1}{2}(q+1)(4+\log\log q)(q^4-q^2+1)^{-1} < 1
\]
for all $q \geqs 3$.

The remaining cases are entirely similar and we omit the details since no special difficulties arise. Note that if $G = {}^2G_2(3)'$ then $\mathcal{M} = \{H\}$, where $H = N_G(R) = 2^3{:}7$, and we compute ${\rm fpr}(x,G/H) \leqs 2/9$ for all nontrivial $x \in G$.
\end{proof}

\section{Classical groups}\label{s:class}

Here we complete the proof of Theorem \ref{t:main} by handling the classical groups. We adopt the same notation as in the previous section and we begin by recalling the following theorem of Kondrat'ev \cite{Kon}. Given a positive integer $n$, let $t(n)$ be the number of nonzero digits in the binary expansion of $n$. In part (ii), note that ${\rm PSp}_2(q) = {\rm L}_2(q)$.

\begin{thm}\label{t:kon}
Let $G$ be a finite simple classical group over $\mathbb{F}_q$ with $q$ odd and let $P$ be a Sylow $2$-subgroup of $G$. Then either $N_G(P) = P$, or one of the following holds:
\begin{itemize}\addtolength{\itemsep}{0.2\baselineskip}
\item[{\rm (i)}] $G = {\rm L}_n^{\e}(q)$, $n \geqs 3$, $t(n) \geqs 2$ and $|N_G(P):P| = ((q-\e)_{2'})^{t(n)-1}/(n,q-\e)_{2'}$.
\item[{\rm (ii)}] $G = {\rm PSp}_{n}(q)$, $n \geqs 2$, $q \equiv \pm 3 \imod{8}$ and $|N_G(P):P| = 3^{t(n)}$.
\end{itemize}
\end{thm}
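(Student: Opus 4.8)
The plan is to pull the problem back to the relevant quasisimple matrix group. Write $G = \hat{G}/Z$, where $\hat{G}$ is the corresponding group ${\rm SL}_n^{\e}(q)$, ${\rm Sp}_n(q)$, $\Omega_n^{\pm}(q)$ or $\Omega_n(q)$ and $Z = Z(\hat{G})$, let $\hat{P}$ be a Sylow $2$-subgroup of $\hat{G}$, and let $P = \hat{P}Z/Z$. A Frattini argument gives $N_G(P) = N_{\hat{G}}(\hat{P})Z/Z$, so it suffices to compute $N_{\hat{G}}(\hat{P})$ --- together with its image under the determinant, respectively the spinor norm --- and then factor out $Z$; since $q$ is odd it is also harmless to enlarge $\hat{G}$ to ${\rm GL}_n^{\e}(q)$ when convenient. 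The first input we need is the classical structure theory of Sylow $2$-subgroups of classical groups, due to Weir and Carter--Fong: $\hat{P}$ is a direct product of ``block'' subgroups indexed by the nonzero binary digits of $n$ (with a small parity adjustment for the orthogonal groups), and the block attached to a digit $2^a$ is an iterated (possibly twisted) wreath product with $a$ successive copies of $C_2$ wrapped around a fixed ``atom''. For ${\rm GL}/{\rm GU}$ the atom is a maximal cyclic $2$-subgroup of a $1$- or $2$-dimensional torus, and which case occurs is governed by $q \bmod 4$; for ${\rm Sp}$ the atom is a Sylow $2$-subgroup $Q_{2^k}$ of ${\rm SL}_2(q)$, with $k = 3$ exactly when $q \equiv \pm 3 \pmod{8}$.

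In the ``generic'' regime --- the linear and unitary groups, and the orthogonal groups --- the atom, and hence $\hat{P}$, is a Sylow $2$-subgroup of the normalizer $M = \hat{T}{:}W$ of a suitable maximal torus $\hat{T}$ (a product of Singer-type tori chosen according to $q \bmod 4$). The crux is to prove $N_{\hat{G}}(\hat{P}) = N_M(\hat{P})$. I would do this by pinning down a characteristic subgroup $B \leqs \hat{P}$ --- for instance the unique abelian subgroup of $\hat{P}$ of maximal order, which for these types recovers the ``diagonal'' part $\hat{P} \cap \hat{T}$ --- and showing that its $\hat{G}$-normalizer is forced into $M$; this is the step where the module structure of $\hat{P}$ must be rigid enough, and it is, once the atom is sufficiently large. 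Granting it, $N_M(\hat{P})$ can be read off: its odd part comes only from the $\hat{P}$-fixed points on $\hat{T}$ (which collapse to a single scalar torus on each block) together with $N_W(\hat{P} \cap W)$, and since a Sylow $2$-subgroup of a classical Weyl group is self-normalizing, $N_M(\hat{P})/\hat{P}$ is a direct product of $t(n)$ cyclic groups --- one per binary digit --- each of order essentially $(q-\e)_{2'}$, up to refinements imposed by the form. In the remaining case $G = {\rm PSp}_n(q)$ the atom $Q_{2^k}$ lies in no torus normalizer, so the mechanism is different: $N_{{\rm SL}_2(q)}(Q_{2^k})/Q_{2^k}$ has odd order and embeds in $\Out(Q_{2^k})$, which is a $2$-group for $k \geqs 4$ but contains $C_3$ when $k = 3$. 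Hence for $q \equiv \pm 1 \pmod{8}$ every atom is self-normalizing and nothing extra appears, whereas for $q \equiv \pm 3 \pmod{8}$ each of the $t(n)$ direct factors of $\hat{P}$ picks up a normalizing element of order $3$ (only the diagonal one survives the wreathing within a factor), so $N_{{\rm Sp}_n(q)}(\hat{P})/\hat{P}$ gains a factor $3^{t(n)}$ and nothing else, as symplectic transformations have trivial determinant.

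It then remains to descend to $G$ and extract the index. For $G = {\rm L}_n^{\e}(q)$ with $n \geqs 3$, the scalar subgroup $Z({\rm GL}_n^{\e}(q)) = C_{q-\e}$ absorbs all but one of the $t(n)$ block-diagonal $(q-\e)_{2'}$-factors --- the one surviving constraint being that the block-scalars multiply to a global scalar --- leaving $(q-\e)_{2'}^{t(n)-1}$ in ${\rm PGL}_n^{\e}(q)$; and passing from ${\rm PGL}_n^{\e}(q)$ down to ${\rm PSL}_n^{\e}(q)$ removes the odd part of the diagonal-automorphism group $C_{(n,q-\e)}$, because $N_{{\rm GL}_n^{\e}(q)}(\hat{P})$ already surjects onto ${\rm GL}_n^{\e}(q)/{\rm SL}_n^{\e}(q) \cong C_{q-\e}$ via the determinant. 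This yields $|N_G(P):P| = (q-\e)_{2'}^{t(n)-1}/(n,q-\e)_{2'}$, which equals $1$ exactly when $t(n) = 1$, i.e.\ when $n$ is a power of $2$. For $G = {\rm PSp}_n(q)$ the centre has order $2$ and so is absorbed into $\hat{P}$ --- this is the familiar collapse $Q_8 \rightsquigarrow {\rm V}_4$ in ${\rm PSL}_2(q)$ --- leaving exactly $3^{t(n)}$ when $q \equiv \pm 3 \pmod{8}$; when $q \equiv \pm 1 \pmod{8}$ the Sylow $2$-subgroups of ${\rm PSL}_2(q)$ are dihedral of order at least $8$ and self-normalizing, a property preserved by the wreathing, so $N_G(P) = P$. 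For the orthogonal groups one checks that no atom has an odd outer automorphism realized in $\hat{G}$ and that the diagonal-torus contribution dies on passing first to $\Omega_n^{\pm}(q)$ and then to ${\rm P}\Omega_n^{\pm}(q)$, so that $N_G(P) = P$ in every case.

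I expect the main obstacle to be the identification $N_{\hat{G}}(\hat{P}) = N_M(\hat{P})$ in the generic regime: exhibiting the right characteristic subgroup of $\hat{P}$ and controlling its $\hat{G}$-normalizer, uniformly across all classical types rather than laboriously case by case. Closely related is the detailed bookkeeping for the unitary and orthogonal groups, where the form, the type and discriminant data, and the $q \bmod 8$ dichotomy all interact with the binary-digit decomposition of $n$; and the final descent, which demands an exact count of how much of the diagonal torus becomes scalar and of how $N_{\hat{G}}(\hat{P})$ meets ${\rm SL}_n^{\e}(q)$, respectively $\Omega_n^{\pm}(q)$.
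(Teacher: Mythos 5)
The first thing to say is that the paper does not prove this statement at all: Theorem \ref{t:kon} is quoted verbatim as a known theorem of Kondrat'ev \cite{Kon}, just as Theorem \ref{t:km} is quoted from \cite{KM}, so there is no internal proof to compare yours against. Judged on its own terms, your outline follows the route one would expect and which underlies the literature: pass to the quasisimple group (your central-quotient/Frattini reduction is correct), invoke the Weir/Carter--Fong description of a Sylow $2$-subgroup as a direct product, over the nonzero binary digits of $n$, of iterated wreath products over a small ``atom'' whose isomorphism type is governed by $q \bmod 4$ (resp.\ $q \bmod 8$ in the symplectic case), control $N_{\hat{G}}(\hat{P})$ by a torus normalizer, and then do the descent; and the arithmetic you extract --- $((q-\epsilon)_{2'})^{t(n)-1}/(n,q-\epsilon)_{2'}$ for ${\rm L}_n^{\epsilon}(q)$, $3^{t(n)}$ for ${\rm PSp}_n(q)$ with $q\equiv\pm 3 \pmod 8$, triviality otherwise --- agrees with the theorem, including the $\Out(Q_8)\cong S_3$ versus $\Out(Q_{2^k})$ ($k\geqslant 4$) dichotomy that produces the $q\equiv\pm3\pmod 8$ condition.

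As a proof, however, the proposal has genuine gaps, and they sit exactly where you locate the difficulty. The central step $N_{\hat{G}}(\hat{P})=N_M(\hat{P})$ is only asserted, and the mechanism you offer --- that $\hat{P}\cap\hat{T}$ is \emph{the unique} abelian subgroup of $\hat{P}$ of maximal order, hence characteristic --- fails precisely in the small-atom cases where care is needed: when the torus part is elementary abelian (the orthogonal groups) a block such as $C_2\wr C_2\cong D_8$ has three abelian subgroups of maximal order, so no such characteristic subgroup exists and the normalizer has to be pinned down by other means. Relatedly, the orthogonal groups are dismissed in one sentence, yet they are the bulk of the work and the place where the conclusion is most delicate: the theorem asserts $N_G(P)=P$ there even for $q\equiv\pm3\pmod 8$, while the isomorphic low-dimensional cousins do pick up $3$'s (e.g.\ $\Omega_5(q)\cong{\rm PSp}_4(q)$ has $|N_G(P):P|=3$ but $\Omega_7(q)$ has $N_G(P)=P$), so ``one checks that no atom has an odd outer automorphism realized in $\hat{G}$'' is exactly the statement that needs a proof, with the $\Omega$ versus ${\rm SO}$ versus ${\rm O}$ indices, spinor norm and discriminant tracked through the binary decomposition. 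Finally, the descent for ${\rm L}_n^{\epsilon}(q)$ needs more than surjectivity of the determinant on $N_{{\rm GL}_n^{\epsilon}(q)}(\hat{P}_{{\rm GL}})$: since the Sylow $2$-subgroup $\hat{P}$ of ${\rm SL}_n^{\epsilon}(q)$ is a proper subgroup of $\hat{P}_{{\rm GL}}$, you must still identify $N_{{\rm SL}_n^{\epsilon}(q)}(\hat{P})$ with the appropriate intersection and rule out additional odd normalizing elements before the division by $(n,q-\epsilon)_{2'}$ in the central quotient is legitimate. So your plan is a reasonable reconstruction of how such a result is established, but turning it into a proof requires carrying out these steps case by case --- which is why the paper simply cites \cite{Kon} rather than reproving the theorem.
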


\begin{rem}\label{r:matrix}
For certain low rank classical groups defined over small fields, we can compute $\Sigma(x)$ precisely with the aid of {\sc Magma} \cite{Magma} (version V2.26-6). To do this, we typically work in the quasisimple version of the group, namely $L = {\rm SL}_n^{\e}(q)$, ${\rm Sp}_n(q)$ or $\O_n^{\e}(q)$, and we use the functions \texttt{ClassicalClasses} and \texttt{ClassicalMaximals} to construct representatives of the conjugacy classes of elements and maximal subgroups of $L$. It is then straightforward to calculate the relevant fixed point ratios and we compute $\max\{\Sigma(x) \,:\, 1 \ne x \in G\}$ by running over a set of representatives of the conjugacy classes of non-central elements in $L$. 
\end{rem}

\begin{ex}\label{e:magma}
Suppose $G = {\rm P\O}_8^{-}(5)$. We can use the following {\sc Magma} code to show that $\Sigma(x) \leqs 17629/203763$ for all $1 \ne x \in G$. Here we are using the fact that $n(H,P) = 1$ for all $H \in \mathcal{M}$ (see Theorem \ref{t:kon}). We also use  the function \texttt{FixedPointRatio} defined in \cite[Section 1.2.1]{BH}.

{\small
\begin{verbatim}
G:=OmegaMinus(8,5);
cl:=ClassicalClasses(G);
M:=ClassicalMaximals("O-",8,5); 
S:=SylowSubgroup(G,2); 
A:=[i : i in [1..#M] | #M[i] mod #S eq 0];
B:=[];
a:=[i : i in [1..#cl] | cl[i][2] gt 1];
for i in a do
  z:=0; 
  x:=cl[i][3];
  for j in A do 
    z:=z+FixedPointRatio(G,M[j],x); 
  end for;
  Append(~B,z); 
end for;
Maximum(B);
\end{verbatim}}
\end{ex}

We divide the proof of Theorem \ref{t:main2} for classical groups into several subsections and we begin by handling the orthogonal groups in Sections \ref{s:o_odd} and \ref{s:o_even}.

\subsection{Odd dimensional orthogonal groups}\label{s:o_odd}

Let $G = \O_n(q)$, where $n \geqs 7$ and $q \geqs 3$ are odd. Let $P$ be a Sylow $2$-subgroup of $G$ and let $H \in \mathcal{M}$ be a maximal overgroup of $P$ in $G$. By Theorem \ref{t:kon} we have $n(H,P) = 1$ and by appealing to \cite{LS85,Mas} we deduce that one of the following holds:
\begin{itemize}\addtolength{\itemsep}{0.2\baselineskip}
\item[{\rm (a)}] $H$ is a subfield subgroup of type ${\rm O}_{n}(q_0)$, where $q=q_0^e$ for some odd prime $e$.
\item[{\rm (b)}] $H$ is the stabilizer of an orthogonal decomposition $V = V_1 \perp \cdots \perp V_n$, where the $V_i$ are isometric $1$-spaces and $q=p \equiv \pm 3 \imod{8}$.
\item[{\rm (c)}] $n=7$, $q=p \equiv \pm 3 \imod{8}$ and $H = {\rm Sp}_6(2)$.
\item[{\rm (d)}] $H = G_U$ is the stabilizer in $G$ of an even-dimensional  nondegenerate subspace $U$ of $V$ with square discriminant in $\mathbb{F}_q$.
\end{itemize} 
As a consequence, it will be convenient to write $\mathcal{M} = \mathcal{M}_1 \cup \mathcal{M}_2$, where $\mathcal{M}_1$ comprises the subgroups in cases (a), (b) and (c). In turn, we will write $\Sigma(x) = \Sigma_1(x) + \Sigma_2(x)$ for all nontrivial $x \in G$, where $\Sigma(x)$ is defined as in \eqref{e:sigma} and $\Sigma_1(x)$ denotes the contribution from the subgroups in $\mathcal{M}_1$.

Using {\sc Magma}, we can handle the cases $(n,q) = (7,3)$, $(7,5)$ and $(9,3)$ directly (see Remark \ref{r:matrix} and Example \ref{e:magma}). Indeed, we compute $\Sigma(x) \leqs \a$, where $\a$ is defined as follows:
\[
\begin{array}{c|ccc}
(n,q) & (7,3) & (7,5) & (9,3) \\ \hline 
\a & 322/351 & 298/1125 & 1090/3321 
\end{array}
\]
So for the remainder, we will assume that
\begin{equation}\label{e:list1}
(n,q) \not\in \{(7,3), (7,5), (9,3)\}.
\end{equation}

\begin{lem}\label{l:fpr1}
If $H \in \mathcal{M}_1$ then ${\rm fpr}(x,G/H) < q^{(4-n)/2}$ for all nontrivial $x \in G$. 
\end{lem}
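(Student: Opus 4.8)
The plan is to bound the fixed point ratio separately for each of the three types of subgroups in $\mathcal{M}_1$, namely subfield subgroups, imprimitive subgroups stabilizing an orthogonal decomposition into $1$-spaces, and the single subgroup ${\rm Sp}_6(2) < \O_7(q)$ in case (c). Since $\mathcal{M}_1$ is a finite union of such subgroups and the claimed bound $q^{(4-n)/2}$ is fixed, it suffices to establish it for each type, and in most cases the existing literature gives an even stronger estimate once we recall that $n \geqslant 7$ and that the small cases $(n,q) \in \{(7,3),(7,5),(9,3)\}$ have been excluded by \eqref{e:list1}.

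First I would handle the subfield subgroups of type ${\rm O}_n(q_0)$ with $q = q_0^e$, $e$ an odd prime. Here $|H|$ is roughly $q^{\dim G / e} = q^{(\dim G)/e}$ with $\dim G = \binom{n}{2}$, so the index $|G:H|$ is of order $q^{(\dim G)(1-1/e)} \geqslant q^{(\dim G)/3}$. A crude bound ${\rm fpr}(x,G/H) \leqslant |H|/|x^G|^{1/2}$-type argument, or better, the fixed point ratio bounds for subfield subgroups in \cite{Bur1,Bur2,Bur3,Bur4} (or the general bound $|x^G \cap H|/|x^G| \leqslant |C_G(x)|/|C_H(x)| \cdot |H|/|G|$ manipulations), comfortably give something much smaller than $q^{(4-n)/2}$ once $n \geqslant 7$; indeed even ${\rm fpr}(x,G/H) \leqslant |H| / |G|^{1/2}$ is already $O(q^{-(\dim G)/6})$, far below $q^{(4-n)/2}$. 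So this case is routine.

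Next, for the imprimitive subgroup $H$ in case (b), the stabilizer of $V = V_1 \perp \cdots \perp V_n$ with the $V_i$ isometric $1$-spaces, we have $H$ of shape roughly $2^{n-1}.S_n$ inside $\O_n(q)$ (a torus-normalizer-type subgroup), so $|G:H|$ is of order $q^{\binom{n}{2} - (n-1)}/|S_n| \approx q^{\binom{n}{2}}/(2^{n}\, n!)$. Again $|x^G| \geqslant |G|/|C_G(x)|$ is much larger than $|H|$ for $x$ of prime order, since the largest centralizers of prime-order elements in $\O_n(q)$ have order roughly $q^{\binom{n}{2} - (n-1)}$ or so (e.g. a reflection, or $(-I_{n-1},I_1)$), and comparing $|C_G(x)|\cdot|H|/|G|$ against $q^{(4-n)/2}|x^G|$ reduces to an elementary inequality in $n$ and $q$ that holds for all $n \geqslant 7$. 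Alternatively one can cite the explicit fixed point ratio estimates for imprimitive (non-subspace) actions in \cite{Bur1} or \cite{GK} directly. Case (c) is a single group ${\rm Sp}_6(2)$ with $|H| = 1451520$ and $q = p \equiv \pm 3 \imod 8$ with $q \geqslant 7$ (since $(7,3)$ and $(7,5)$ are excluded), so $|G:H| = |\O_7(q)|/1451520 \geqslant |\O_7(7)|/1451520$, and since $q^{(4-7)/2} = q^{-3/2}$ we just need $|x^G \cap H|/|x^G| < q^{-3/2}$; this follows immediately from $|x^G \cap H| \leqslant |H|$ and $|x^G| \geqslant |G|/|C_G(x)|$ with $|C_G(x)| \leqslant |{\rm SO}_6(q)| \cdot 2$ or similar, a trivial computation once $q \geqslant 7$.

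The main obstacle is a uniformity issue rather than a deep one: one must make sure the elementary centralizer-order bounds are applied to the genuinely worst-case prime-order element (a reflection, a transvection-like element, or $(-I_{n-1},I_1)$), whose $G$-class is smallest, and check that even in that case the inequality $|C_G(x)|\,|H| < q^{(4-n)/2}|G|$ survives for every $n \geqslant 7$ and every relevant $q$ — the tightest instances being precisely the small $(n,q)$ just above the excluded list, e.g. $(n,q) = (7,7)$, $(9,5)$, $(11,3)$. I expect these to be checked either by the clean general bounds of \cite{Bur1} (which already incorporate the correct worst cases) or, if need be, by a direct {\sc Magma} verification in the handful of borderline groups, exactly as in Remark \ref{r:matrix}.
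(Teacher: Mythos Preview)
Your case-by-case strategy is reasonable in outline, but the elementary counting bounds you actually invoke do not work. The inequalities you write down --- ``${\rm fpr}(x,G/H) \leqslant |H|/|x^G|^{1/2}$'' and ``${\rm fpr}(x,G/H) \leqslant |H|/|G|^{1/2}$'' --- are not valid fixed point ratio bounds, and the only trivially correct crude estimate, $|x^G\cap H|/|x^G|\leqslant |H|/|x^G|$, is far too weak here. For instance, take $n=11$, $q=3$ and $H$ the imprimitive subgroup of type ${\rm O}_1(3)\wr S_{11}$: then $|H|$ has order of magnitude $2^{10}\cdot 11!$ while the smallest nontrivial class (the involution $(-I_{10},I_1)$) has $|x^G|=\tfrac{1}{2}\cdot 3^5(3^5-1)=29403$, so $|H|/|x^G|\gg 1$, nowhere near $3^{-7/2}$. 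The same failure occurs in case~(c) at $q=11$, where $|{\rm Sp}_6(2)|=1451520$ again exceeds the minimal $|x^G|\approx 8.85\times 10^5$; your suggested check there does not go through.

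The paper's proof avoids all case analysis with a single uniform stroke. Every $H\in\mathcal{M}_1$ is a \emph{non-subspace} maximal subgroup, so the main theorem of \cite{Bur1} gives
\[
{\rm fpr}(x,G/H) < |x^G|^{-\frac{1}{2}+\frac{1}{n}}
\]
for every element $x$ of prime order (using only that $(n,q)\ne(7,3)$). Combining this with the lower bound $|x^G|\geqslant\tfrac{1}{2}q^{(n-1)/2}(q^{(n-1)/2}-1)$, attained by the involution $(-I_{n-1},I_1)$ with minus-type $(-1)$-eigenspace, yields the desired inequality directly. You gesture towards \cite{Bur1} several times as a fallback; in fact it is the entire argument, and no splitting into types (a), (b), (c) --- nor any {\sc Magma} verification at the borderline --- is needed.
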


\begin{proof}
We may assume $x$ has prime order and thus the main theorem of \cite{Bur1} (which is proved in \cite{Bur2,Bur3,Bur4}) yields
\begin{equation}\label{e:fpr1}
{\rm fpr}(x,G/H) < |x^G|^{-\frac{1}{2}+\frac{1}{n}}
\end{equation}
since $(n,q) \ne (7,3)$. The conjugacy classes of elements of prime order in $G$ are described in \cite[Section 3.5]{BG_book} and it is straightforward to show that 
\[
|x^G| \geqs \frac{1}{2}q^{\frac{1}{2}(n-1)}(q^{\frac{1}{2}(n-1)}-1),
\]
with equality if $x = (-I_{n-1},I_1)$ is an involution with a minus-type $(-1)$-eigenspace on the natural module. The result now follows by combining this with the upper bound in \eqref{e:fpr1}.
\end{proof}

\begin{prop}\label{p:1}
We have $\Sigma_1(x) < 1/12$ for all nontrivial $x \in G$.
\end{prop}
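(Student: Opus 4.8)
The plan is to bound $\Sigma_1(x)$ by summing the fixed point ratio estimate from Lemma \ref{l:fpr1} over all subgroups in $\mathcal{M}_1$, so the first step is to count the conjugacy classes of such subgroups. Since $n(H,P)=1$ for every $H\in\mathcal{M}$ by Theorem \ref{t:kon}, it suffices to count $G$-classes: there are at most $\log\log q$ subfield subgroups of type (a) (one for each odd prime divisor $e$ of $f$, where $q=p^f$), exactly one class of type (b), and at most one class of type (c) (only when $n=7$). Hence $|\mathcal{M}_1|\leqslant 2+\log\log q$, and at most $1+\log\log q$ if $n\geqslant 9$.

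Next I would feed this into Lemma \ref{l:fpr1}. For each $H\in\mathcal{M}_1$ we have ${\rm fpr}(x,G/H)<q^{(4-n)/2}$, so
\[
\Sigma_1(x) < (2+\log\log q)\,q^{(4-n)/2}.
\]
The bound $q^{(4-n)/2}$ is largest when $n$ is smallest; given the exclusions in \eqref{e:list1}, the relevant small cases are $(n,q)=(7,q)$ with $q\geqslant 7$, $(n,q)=(9,q)$ with $q\geqslant 5$, and $n\geqslant 11$. For $n=7$, $q\geqslant 7$ one checks $(2+\log\log q)q^{-3/2} < 1/12$ directly (the left side is decreasing in $q$ and already small at $q=7$). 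For $n=9$, the factor is at most $1+\log\log q$ and $q\geqslant 5$, so we need $(1+\log\log q)q^{-5/2}<1/12$, again clear. For $n\geqslant 11$ the exponent $(4-n)/2\leqslant -7/2$ makes the bound decay even faster in $q$ and the number of subgroups grows only like $\log\log q$, so the inequality holds comfortably for all $q\geqslant 3$. A short monotonicity argument in $n$ and $q$ then covers every remaining pair, and the Magma computations already dispatch the three excluded cases $(7,3),(7,5),(9,3)$.

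The only mild obstacle is making the case division clean: one must be careful that the subfield count is truly at most $\log\log q$ (which follows since a product of distinct odd primes dividing $f$ is at most $f\leqslant\log q$, giving at most $\log\log q$ such primes), and that the $n=7$ case genuinely needs the exclusion of $q=3,5$ — at $q=3$ the raw bound $(2+\log\log 3)\cdot 3^{-3/2}$ would not even be defined in the intended sense and in any case the Magma value $322/351$ is far above $1/12$, which is exactly why those cases are handled separately. Beyond that bookkeeping, nothing subtle arises: the estimate $q^{(4-n)/2}$ from Lemma \ref{l:fpr1} is strong enough that $\Sigma_1(x)<1/12$ falls out with substantial room to spare, and the bulk of $\Sigma(x)$ will instead come from $\Sigma_2(x)$, the contribution of the geometric subspace stabilizers in $\mathcal{M}_2$, to be analyzed separately.
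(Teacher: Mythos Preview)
Your approach has the right shape but contains a genuine numerical gap and a miscount.

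First, the count of type (c) subgroups is wrong: when $n=7$ and $q=p\equiv\pm 3\pmod 8$, there are \emph{two} $G$-classes of maximal subgroups $H\cong{\rm Sp}_6(2)$ (see \cite[Table 8.40]{BHR}), not one. So your bound $|\mathcal{M}_1|\leqslant 2+\log\log q$ should at best be $3+\log\log q$ for $n=7$.

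Second, and more seriously, your assertion that $(2+\log\log q)q^{-3/2}<1/12$ for $q\geqslant 7$ is false. At $q=7$ one has $\log\log 7\approx 1.49$ and $7^{-3/2}\approx 0.054$, giving $(2+1.49)\cdot 0.054\approx 0.19$, well above $1/12\approx 0.083$. The bound also fails at $q=9$ and $q=11$ (where it gives roughly $0.14$ and $0.10$ respectively). So the inequality you claim to ``check directly'' simply does not hold in the range you need.

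The paper repairs this by exploiting a dichotomy you overlooked: subfield subgroups of type (a) require $q=q_0^e$ with $e\geqslant 3$ an odd prime, so $q$ is not prime; on the other hand, types (b) and (c) require $q=p$ prime. Hence these cases are mutually exclusive, and one never needs to add the $\log\log q$ term to the $(b)$/$(c)$ contribution. In the non-prime case $\Sigma_1(x)<(\log\log q)q^{(4-n)/2}$; in the prime case (where, by the exclusions in \eqref{e:list1}, $q\geqslant 11$ when $n=7$) one gets $\Sigma_1(x)<3q^{(4-n)/2}$, and $3\cdot 11^{-3/2}\approx 0.082<1/12$ just squeezes through. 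Without this case split the bound is too coarse to close the argument at $n=7$.
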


\begin{proof}
Fix a nontrivial element $x \in G$ and first assume $H \in \mathcal{M}$ is a subfield subgroup of type ${\rm O}_{n}(q_0)$, where $q=q_0^e$ and $e$ is an odd prime. There are at most $\log\log q$ possibilities for $e$ and for each choice there is a unique $G$-class of subgroups by \cite[Proposition 4.5.8]{KL}. Since there are no subgroups in $\mathcal{M}$ of type (b) or (d), by applying Lemma \ref{l:fpr1} we deduce that 
\[
\Sigma_1(x) < q^{\frac{1}{2}(4-n)}\log\log q < \frac{1}{12}
\]
as required. 

For the remainder, we may assume $q=p \equiv \pm 3 \imod{8}$, where $q \geqs 11$ if $n=7$ (see \eqref{e:list1}). First let $H \in \mathcal{M}_1$ be a maximal subgroup of type ${\rm O}_{1}(q) \wr S_n$. By \cite[Proposition 4.2.15]{KL}, there is a unique $G$-class of such subgroups and Lemma \ref{l:fpr1} implies that ${\rm fpr}(x,G/H) < q^{(4-n)/2}$. If $n \geqs 9$ then $\Sigma_1(x) = {\rm fpr}(x,G/H)$ and the result follows. Finally suppose $n=7$, $q \geqs 11$ and $H = {\rm Sp}_6(2)$. Here there are two $G$-classes of these subgroups (see \cite[Table 8.40]{BHR}, for example) and thus $\Sigma_1(x) < 3q^{(4-n)/2}<1/12$.
\end{proof}

\begin{prop}\label{p:2}
We have $\Sigma_2(x) < 11/12$ for all nontrivial $x \in G$.
\end{prop}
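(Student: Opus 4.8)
The plan is to bound the contribution $\Sigma_2(x)$ coming from the subgroups in $\mathcal{M}_2$, i.e. the stabilizers $G_U$ of nondegenerate even-dimensional subspaces $U$ of the natural module $V$ with square discriminant. Write $\dim U = m$, so $m$ is even with $2 \leqs m \leqs n-1$ (since $n$ is odd, $U^\perp$ is odd-dimensional and nonzero). For each such $m$ there are boundedly many $G$-classes of subspace stabilizers of type $\mathrm{O}_m^{\pm} \perp \mathrm{O}_{n-m}$ satisfying the square-discriminant condition — in fact a constant number (at most $2$ or so), which can be read off from \cite{KL} or \cite{BHR}. So $\Sigma_2(x)$ is at most a constant times $\sum_{m} {\rm fpr}(x, G/G_U)$, summed over even $m$ with $2 \leqs m \leqs n-1$, and by symmetry (replacing $U$ by $U^\perp$ reduces to $m \leqs (n-1)/2$) it suffices to control $\sum_{m \text{ even}, \, 2 \leqs m \leqs (n-1)/2} {\rm fpr}(x, G/G_U)$.

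The key input is a fixed point ratio estimate for subspace actions of orthogonal groups. I would use the bounds of Guralnick–Kantor type (as in \cite{GK}, cf. also \cite{Bur1,Bur2,Bur3,Bur4}): for the action of $G$ on the set of nondegenerate $m$-subspaces of a given type, ${\rm fpr}(x, G/G_U) < q^{-m} \cdot c$ for an absolute constant $c$ (roughly $2$), uniformly over nontrivial $x$, when $m \leqs (n-1)/2$. Granting this, $\Sigma_2(x)$ is bounded by a constant multiple of $\sum_{m \geqs 2} q^{-m} = q^{-2}/(1-q^{-1})$, which is already small for $q \geqs 5$ but \emph{not} obviously below $11/12$ for $q = 3$ with the crude constants — this is the main obstacle. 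To push through $q=3$, I would treat the smallest values of $m$ (say $m = 2$ and perhaps $m=4$) separately using the sharper element-by-element estimates: the dominant term is $m=2$, where $G_U$ has type $\mathrm{O}_2^{\pm}(q) \perp \mathrm{O}_{n-2}(q)$, and for this one uses \eqref{e:fpr1} together with the lower bound $|x^G| \geqs \frac12 q^{(n-1)/2}(q^{(n-1)/2}-1)$ from the proof of Lemma \ref{l:fpr1} to get a genuinely small contribution once $n \geqs 9$ (recall the cases $(n,q) \in \{(7,3),(7,5),(9,3)\}$ are already excluded by \eqref{e:list1}, and $n=7$ with $q \geqs 11$ is in good shape because then $q$ is large).

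Concretely, the steps in order: (1) enumerate the types of subspace stabilizer arising in case (d), record that for each even $m$ there are at most a bounded number $\kappa$ of $G$-classes, and reduce to $m \leqs (n-1)/2$ by the $U \leftrightarrow U^\perp$ duality; (2) quote the uniform subspace fixed point ratio bound ${\rm fpr}(x,G/G_U) < 2q^{-m}$ for $m \leqs (n-1)/2$ from \cite{GK} (and \cite{Bur1}); (3) sum the geometric series to obtain $\Sigma_2(x) < 2\kappa \sum_{m=2}^{(n-1)/2} q^{-m} < 2\kappa \, q^{-2}(1-q^{-1})^{-1}$, and observe this is $< 11/12$ whenever $q$ is large enough (say $q \geqs 7$, with the constant $\kappa$ made explicit); (4) for the finitely many residual pairs — essentially $q \in \{3,5\}$ with $n$ small, and $q=3$ with $n$ not too large — either refine using \eqref{e:fpr1} and the lower bound on $|x^G|$ to beat $11/12$ term by term, or fall back on the direct {\sc Magma} computation described in Remark \ref{r:matrix}. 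The only real subtlety is bookkeeping the number of classes $\kappa$ correctly (the square-discriminant restriction genuinely cuts the count, which is what makes the constant small enough) and checking that the $q=3$ tail does not accumulate past $11/12$ once the excluded small cases in \eqref{e:list1} are removed.
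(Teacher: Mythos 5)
Your overall architecture (one class of stabilizers per admissible $m$, sum fixed point ratios over $m$, geometric series, special care for small $q$) matches the paper, but the key analytic input you quote is not available and your proposed patch for the dominant terms is invalid, so as written the argument has a genuine gap. First, the uniform bound ${\rm fpr}(x,G/G_U) < 2q^{-m}$ for the action on nondegenerate $m$-spaces is not what \cite{GK} provides for orthogonal groups: \cite[Proposition 3.1]{GK} concerns linear groups acting on arbitrary $m$-subspaces, while the relevant orthogonal result is \cite[Proposition 3.16]{GK}, whose bound carries the additional terms $(2q+1)q^{\frac{1}{2}(1-n)}$ and $q^{\frac{1}{2}(m+1-n)}$ (resp. $q^{\frac{1}{2}(2-m)}+q^{m-n}$ for $m$ close to $n$). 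These extra terms dominate $q^{-m}$ once $m$ is of order $n/2$, so your clean geometric series in $q^{-m}$ is not justified; the paper's proof consists precisely in summing the Proposition 3.16 bounds over all even $m$ and checking the total is below $11/12$ for every $(n,q)$ outside \eqref{e:list1}, including $q=3$. Second, your fallback for the small-$m$ terms via \eqref{e:fpr1} cannot work: that bound comes from \cite{Bur1}, which excludes subspace actions, and it genuinely fails here. For example, with $H$ of type ${\rm O}_2 \perp {\rm O}_{n-2}$ and $x = (-I_{n-1},I_1)$ (the element realizing your lower bound $|x^G| \geqs \frac{1}{2}q^{\frac{1}{2}(n-1)}(q^{\frac{1}{2}(n-1)}-1)$), the fixed point ratio is roughly $q^{-2}$ (essentially the proportion of nondegenerate $2$-spaces lying in the $(n-1)$-dimensional eigenspace), whereas $|x^G|^{-\frac{1}{2}+\frac{1}{n}}$ is about $q^{-(n-3)/2}$, which is far smaller for $n \geqs 9$. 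This is exactly why the paper states that \eqref{e:fpr1} cannot be invoked for the subgroups in $\mathcal{M}_2$.

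Two smaller points. Your class-count bookkeeping is fine but can be sharpened: the square-discriminant condition determines the type of $U$, so by \cite[Proposition 4.1.6]{KL} there is exactly one $G$-class for each even $m$ (so $\kappa = 1$, at most $(n-1)/2$ subgroups in total); also note the paper does not reduce to $m \leqs (n-1)/2$ by duality but instead uses the two forms of the Proposition 3.16 bound for $m \leqs n-5$ and $m \in \{n-3,n-1\}$. Finally, your step (4) speaks of "finitely many residual pairs" at $q=3$, but $n$ is unbounded, so a {\sc Magma} fallback cannot cover the remaining cases; the correct fix is to carry out the explicit summation of the Proposition 3.16 bounds (as the paper does), which handles all $(n,q)$ outside \eqref{e:list1} with only $(n,q)=(10,3),(12,3)$-style refinements needed in the even-dimensional analogue, and none here.
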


\begin{proof}
Let $H = G_U$ be the stabilizer of a nondegenerate $m$-space $U$, where $m$ is even and the restriction of the defining quadratic form on $V$ has square discriminant. The latter condition uniquely determines the Witt index of $U$ and by applying \cite[Proposition 4.1.6]{KL} we deduce that there is a unique $G$-class of subgroups for each $m$, hence a total of at most $(n-1)/2$ subgroups in $\mathcal{M}_2$.

Since $H$ is a subspace subgroup, we cannot appeal to the upper bound in \eqref{e:fpr1}. In its place, we work with the bound in \cite[Proposition 3.16]{GK}, which gives
\[
{\rm fpr}(x,G/H) < (2q+1)q^{\frac{1}{2}(1-n)}+q^{\frac{1}{2}(m+1-n)}+q^{-m}
\] 
if $2 \leqs m \leqs n-5$ and
\[
{\rm fpr}(x,G/H) < (2q+1)q^{\frac{1}{2}(1-n)}+q^{\frac{1}{2}(2-m)}+q^{m-n}
\]
for $m \in \{n-3,n-1\}$.
Putting these bounds together, we deduce that
\[
\Sigma_2(x) < \left(\frac{1}{2}(n-1)(2q+1)+q(q+1)\right)q^{\frac{1}{2}(1-n)}+q^{-3} +a+b,
\]
where
\[
a = q^{-1}\sum_{i=0}^{\infty}q^{-i} < \frac{1}{q-1},\;\; b = q^{-2}\sum_{i=0}^{\infty}q^{-2i} < \frac{1}{q^2-1}.
\]
Since we may assume $(n,q) \ne (7,3), (9,3)$ (see \eqref{e:list1}), it is straightforward to check that $\Sigma_2(x)<11/12$ as required. 
\end{proof}

By combining Propositions \ref{p:1} and \ref{p:2}, we obtain the following result.

\begin{thm}\label{t:o_odd}
The conclusion to Theorem \ref{t:main2} holds if $G = \O_n(q)$, where $n \geqs 7$ is odd. 
\end{thm}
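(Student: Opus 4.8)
The plan is to assemble the pieces already in place. Recall from the discussion following \eqref{e:sigma} that it suffices to verify $\Sigma(x) < 1$ for all $x \in G$ of prime order, and that for $G = \O_n(q)$ with $n \geqs 7$ odd (and $q$ odd) we have $n(H,P) = 1$ for every $H \in \mathcal{M}$: indeed, such a $G$ is neither an ${\rm L}_n^{\e}(q)$ nor a ${\rm PSp}_n(q)$, so Theorem \ref{t:kon} gives $N_G(P) = P$, whence $N_H(P) = H \cap N_G(P) = P$ and Lemma \ref{l:count} applies. Thus $\Sigma(x)$ is genuinely the sum of the fixed point ratios ${\rm fpr}(x,G/H)$ over a set of conjugacy class representatives $H \in \mathcal{M}$, each counted once, and the decomposition $\mathcal{M} = \mathcal{M}_1 \cup \mathcal{M}_2$ yields $\Sigma(x) = \Sigma_1(x) + \Sigma_2(x)$ for all nontrivial $x$.

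First I would dispose of the three small cases $(n,q) \in \{(7,3),(7,5),(9,3)\}$ via the {\sc Magma} computations recorded before \eqref{e:list1}: in each of these, $\Sigma(x) \leqs \a < 1$ for all nontrivial $x$, so Theorem \ref{t:main2}(i) holds. For every other pair $(n,q)$ with $n \geqs 7$ odd and $q \geqs 3$ odd I would simply combine Proposition \ref{p:1}, which gives $\Sigma_1(x) < 1/12$, with Proposition \ref{p:2}, which gives $\Sigma_2(x) < 11/12$; adding these bounds yields $\Sigma(x) < 1$ for all nontrivial $x \in G$. Finally, since an orthogonal group $\O_n(q)$ with $n$ odd is never isomorphic to ${\rm L}_n(3)$, case (ii) of Theorem \ref{t:main2} cannot occur, so what we have shown is precisely conclusion (i), which is what is required.

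At the level of this theorem there is no remaining obstacle: the entire argument reduces to the one-line estimate $\Sigma(x) = \Sigma_1(x) + \Sigma_2(x) < 1/12 + 11/12 = 1$ together with a small amount of bookkeeping (the three {\sc Magma} cases and the observation that case (ii) is vacuous here). The real work has already been carried out in Propositions \ref{p:1} and \ref{p:2}, and ultimately in Lemma \ref{l:fpr1} and the fixed point ratio bounds of \cite{Bur1} and \cite{GK} for subspace and non-subspace actions respectively.
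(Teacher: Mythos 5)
Your proposal is correct and follows the paper's own route exactly: the three pairs $(n,q) \in \{(7,3),(7,5),(9,3)\}$ are settled by the {\sc Magma} computations, and in all remaining cases the theorem is just the sum of Proposition \ref{p:1} and Proposition \ref{p:2}, giving $\Sigma(x) < 1/12 + 11/12 = 1$. Your extra observations (that Theorem \ref{t:kon} forces $N_G(P)=P$, hence $n(H,P)=1$ via Lemma \ref{l:count}, and that case (ii) of Theorem \ref{t:main2} is vacuous here) are accurate and consistent with the paper's setup.
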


\subsection{Even dimensional orthogonal groups}\label{s:o_even}

In this section we assume $G = {\rm P\O}_n^{\e}(q)$, where $n \geqs 8$ is even and $q \geqs 3$ is odd. As before, let $P$ be a Sylow $2$-subgroup of $G$ and let $H \in \mathcal{M}$ be a maximal overgroup of $P$ in $G$. By Theorem \ref{t:kon} we have $n(H,P) = 1$ and using \cite{LS85,Mas} we deduce that one of the following holds:
\begin{itemize}\addtolength{\itemsep}{0.2\baselineskip}
\item[{\rm (a)}] $H$ is a subfield subgroup of type ${\rm O}_{n}^{\e}(q_0)$, where $q=q_0^e$ for some odd prime $e$.
\item[{\rm (b)}] $H$ is the stabilizer of an orthogonal decomposition $V = V_1 \perp \cdots \perp V_b$, where the $V_i$ are isometric nondegenerate $a$-spaces with $a = 2^k$, $k \geqs 0$.
\item[{\rm (c)}] $G = {\rm P\O}_{8}^{+}(q)$, $q=p \equiv \pm 3 \imod{8}$ and $H = \O_8^{+}(2)$.
\item[{\rm (d)}] $H = G_U$ is the stabilizer in $G$ of a nondegenerate subspace $U$ of $V$.
\end{itemize} 
Write $\mathcal{M} = \mathcal{M}_1 \cup \mathcal{M}_2$, where $\mathcal{M}_1$ comprises the subgroups of type (a), (b) and (c). Similarly, write $\Sigma(x) = \Sigma_1(x)+\Sigma_2(x)$ for all nontrivial $x \in G$.

Using {\sc Magma}, we compute $\Sigma(x) \leqs 986/3731$ when $(n,q) = (8,3)$ and $\Sigma(x) \leqs 17629/203763$ for $(n,q) = (8,5)$. Therefore, for the remainder we will assume
\begin{equation}\label{e:list0}
(n,q) \not\in \{(8,3),(8,5)\}.
\end{equation}

\begin{lem}\label{l:fpr2}
If $H \in \mathcal{M}_1$ then ${\rm fpr}(x,G/H) < q^{5-n}$ for all nontrivial $x \in G$. 
\end{lem}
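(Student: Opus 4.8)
The plan is to mimic the proof of Lemma~\ref{l:fpr1}. Since ${\rm fpr}(x,G/H) \leqs {\rm fpr}(x^m,G/H)$ for every integer $m$ (as noted in Section~\ref{s:intro}), it suffices to prove the bound when $x$ has prime order. Now if $H \in \mathcal{M}_1$, then $H$ is a subfield subgroup of type ${\rm O}_n^{\e}(q_0)$, the stabiliser of an orthogonal decomposition of $V$ into isometric $2^k$-spaces, or (when $n=8$ and $\e=+$) the subgroup $\O_8^+(2)$ of $G$; in each of these cases $H$ is not a subspace subgroup. Since we may assume $(n,q) \notin \{(8,3),(8,5)\}$ by \eqref{e:list0}, the main theorem of \cite{Bur1} (which is proved in \cite{Bur2,Bur3,Bur4}) then gives
\[
{\rm fpr}(x,G/H) < |x^G|^{-\frac{1}{2}+\frac{1}{n}}.
\]

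It remains to bound $|x^G|$ from below over all $x \in G$ of prime order, which I would do by running through the classes described in \cite[Section~3.5]{BG_book}. The minimum of $|x^G|$ is attained when $x$ is a long root element (an element of order $p$ with Jordan type $J_2^2 J_1^{n-4}$ on $V$), in which case
\[
|x^G| = \frac{(q^{n/2}-\e)(q^{n-2}-1)(q^{n-4}-1)}{(q^2-1)(q^{n/2-2}-\e)} > q^{2n-6},
\]
the final inequality being a routine computation (write each factor as $q^i$ times a quantity close to $1$ and check that the resulting rational expression exceeds $1$). Since $t \mapsto t^{-1/2+1/n}$ is decreasing, combining the two displays yields
\[
{\rm fpr}(x,G/H) < \big(q^{2n-6}\big)^{-\frac{1}{2}+\frac{1}{n}} = q^{5-n-6/n} < q^{5-n},
\]
which is the required bound.

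I expect the two places needing care to be the following. First, one must verify that, for even $n \geqs 8$ and odd $q$, the bound ${\rm fpr}(x,G/H) < |x^G|^{-1/2+1/n}$ (for $x$ of prime order and $H$ a non-subspace maximal subgroup) can fail only when $(n,q) \in \{(8,3),(8,5)\}$, which are exactly the pairs excluded in \eqref{e:list0} and dealt with computationally via Remark~\ref{r:matrix} and Example~\ref{e:magma}. Second, one must check against \cite[Section~3.5]{BG_book} that the long root class is genuinely the smallest; the closest competitors are the involutions of type $(-I_2,I_{n-2})$ and the semisimple elements of prime order supported on a nondegenerate $2$-space, but each of these has class size of order $q^{2n-4}$, comfortably larger than $q^{2n-6}$. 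Neither step is hard, and the spare factor $q^{-6/n}$ in the display above leaves enough room to absorb any imprecision in the lower bound for $|x^G|$; so I do not anticipate a serious obstacle.
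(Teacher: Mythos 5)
Your proposal follows the paper's own argument essentially verbatim: reduce to prime order, invoke the bound \eqref{e:fpr1} from \cite{Bur1} (valid here because the pairs excluded in \eqref{e:list0} cover the exceptional case), and bound $|x^G|$ below by the size of the long root element class, which is what the paper does (its stated lower bound $\frac{(q^{n-2}-1)(q^{n/2}+1)(q^{n/2-2}-1)}{q^2-1}$ agrees with your formula after simplifying $(q^{n-4}-1)/(q^{n/2-2}-\e)=q^{n/2-2}+\e$, and it indeed exceeds $q^{2n-6}$, giving $q^{5-n-6/n}<q^{5-n}$). The proof is correct and takes the same route as the paper.
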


\begin{proof}
Suppose $x \in G$ has prime order and note that \eqref{e:fpr1} holds via the main theorem of \cite{Bur1} (here we are using the fact that $(\e,n,q) \ne (+,8,3)$). By inspecting \cite[Section 3.5]{BG_book}, it is straightforward to show that
\[
|x^G| \geqs \frac{(q^{n-2}-1)(q^{n/2}+1)(q^{n/2-2}-1)}{q^2-1},
\]
with equality if and only if $\e=-$ and $x$ is a long root element (that is, $x$ is a unipotent element with Jordan form $(J_2^2,J_1^{n-4})$ on $V$). The result now follows by combining these two bounds. 
\end{proof}

\begin{prop}\label{p:21}
We have $\Sigma_1(x) < 1/22$ for all nontrivial $x \in G$.
\end{prop}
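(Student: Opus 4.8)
The plan is to mirror the structure of the odd-dimensional case (Proposition~\ref{p:1}), treating the three families in $\mathcal{M}_1$ separately and using Lemma~\ref{l:fpr2} together with the classification of conjugacy classes of such subgroups. Fix a nontrivial $x \in G$, which we may take to have prime order, and bound $\mathrm{fpr}(x,G/H) < q^{5-n}$ for every $H \in \mathcal{M}_1$ by Lemma~\ref{l:fpr2}. It then suffices to show that $|\mathcal{M}_1|$, counted with the appropriate number of $G$-classes (recall $n(H,P)=1$ by Theorem~\ref{t:kon}, so each $G$-class contributes exactly one subgroup containing $P$), times $q^{5-n}$ stays below $1/22$. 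Since $n \geqslant 8$, the factor $q^{5-n} \leqslant q^{-3} \leqslant 3^{-3}$, so we have a good deal of room.

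First I would handle the subfield subgroups of type (a): by \cite[Proposition~4.5.8]{KL} there is a bounded number of $G$-classes for each odd prime $e \mid f$ (at most a small constant, say at most $3$, depending on $\e$ and whether $n/2$ is even), and there are at most $\log\log q$ such primes $e$, contributing less than $3(\log\log q)\,q^{5-n}$. Next, for the imprimitive subgroups of type (b), stabilizers of decompositions into $b = n/a$ isometric nondegenerate $a$-spaces with $a = 2^k$: the number of choices of $a$ is at most $\log n$, and for each admissible $(a,b,\text{type})$ there are at most a bounded number (again a small constant, readable from \cite{KL,BHR}) of $G$-classes; this contributes less than $c_1(\log n)\,q^{5-n}$ for an explicit constant $c_1$. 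Finally, case (c) occurs only for $G = \mathrm{P\Omega}_8^+(q)$ with $q = p \equiv \pm 3 \pmod 8$, where there are a bounded number of $G$-classes of $\Omega_8^+(2)$ subgroups (at most three, by triality and \cite{BHR}), contributing less than $3q^{-3}$; but since $(n,q) \notin \{(8,3),(8,5)\}$ by \eqref{e:list0}, here $q \geqslant 11$, so this term is tiny.

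Summing the three contributions, $\Sigma_1(x) < \big(c_2 \log n + c_3 \log\log q\big) q^{5-n}$ for explicit small constants $c_2, c_3$, and since $n \geqslant 8$ and $q \geqslant 3$ (with $q \geqslant 11$ when $n = 8$ by \eqref{e:list0}) a direct check — monotonicity in $n$ for fixed $q$, and the worst case being small $n$ with small $q$ — gives $\Sigma_1(x) < 1/22$. The main obstacle, as in Proposition~\ref{p:1}, is purely bookkeeping: one must be careful about the exact number of $G$-classes of imprimitive subgroups in case (b), since for orthogonal groups the decomposition stabilizers can split into several classes according to the isometry types of the summands and the discriminant/sign conditions, and one must confirm the list of admissible $a = 2^k$ from Maslova's description \cite{Mas} of odd-index maximal subgroups (in particular that only $2$-power block sizes arise). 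None of this presents a genuine difficulty — the crude bounds above are wasteful by a wide margin — so I would simply cite \cite{KL,BHR,Mas} for the class counts and verify the final numerical inequality, noting that the small cases already excluded in \eqref{e:list0} are exactly those where the estimate would otherwise be tight.
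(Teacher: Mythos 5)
Your proposal takes essentially the same route as the paper's proof: split $\mathcal{M}_1$ into types (a)--(c), count the $G$-classes of each type via \cite{KL,BHR} (using $n(H,P)=1$ from Theorem \ref{t:kon}), apply the uniform bound ${\rm fpr}(x,G/H)<q^{5-n}$ of Lemma \ref{l:fpr2}, and verify the resulting numerical inequality, the paper's precise counts being a unique class for each subfield prime $e$, at most $2\log n$ subgroups of type (b), and $4$ classes of $\O_8^{+}(2)$ in ${\rm P\O}_8^{+}(q)$ (not $3$ as you suggest), which yields $\Sigma_1(x)<q^{5-n}(\delta\log\log q+2\log n+4)\leqslant 1/22$. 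The small discrepancies in your placeholder constants are immaterial, since the estimate has ample slack exactly as you anticipate.
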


\begin{proof}
This is very similar to the proof of Proposition \ref{p:1}. First assume $H$ is a subfield subgroup of type ${\rm O}_{n}^{\e}(q_0)$, where $q=q_0^e$ and $e$ is an odd prime. There are at most $\log\log q$ possibilities for $e$, with a unique $G$-class of subgroups for each $e$ (see \cite[Proposition 4.5.8]{KL}). Next suppose $H$ is the stabilizer of an orthogonal decomposition as in (b) above. Now $G$ has two classes of subgroups of type ${\rm O}_1(q) \wr S_n$ (see \cite[Proposition 4.2.15]{KL}) and at most one class of subgroups of type ${\rm O}_{a}^{\e'}(q) \wr S_{n/a}$ for each divisor $a = 2^k$ of $n$ and each choice of sign $\e' = \pm$, where $k \geqs 1$. Since there are at most $\log n - 1$ possibilities for $a$, it follows that $\mathcal{M}_1$ contains  at most $2\log n$ subgroups of type (b). Finally, if $q=p \equiv \pm 3 \imod{8}$ then \cite[Table 8.50]{BHR} indicates that ${\rm P\O}_8^{+}(q)$ has $4$ classes of maximal subgroups isomorphic to $\O_{8}^{+}(2)$. 

In view of Lemma \ref{l:fpr2} and \eqref{e:list0}, we conclude that 
\[
\Sigma_1(x) < q^{5-n}(\delta\log\log q + 2\log n + 4) \leqs \frac{1}{22}
\]
for all nontrivial $x \in G$, where $\delta=1$ if $q \geqs 27$, otherwise $\delta=0$.
\end{proof}

We will need the following lemma in order to obtain an effective upper bound on $\Sigma_2(x)$.

\begin{lem}\label{l:fpr3}
Let $H$ be the stabilizer in $G$ of a $1$-dimensional nondegenerate subspace of the natural module. Then ${\rm fpr}(x,G/H) \leqs (q^2-1)^{-1}$ for all nontrivial $x \in G$. 
\end{lem}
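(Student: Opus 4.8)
The plan is to bound the fixed point ratio $\mathrm{fpr}(x, G/H)$ for $H = G_U$ the stabilizer of a nondegenerate $1$-space $U$, which is a point stabilizer in the natural subspace action of $G$ on $1$-dimensional nondegenerate subspaces of $V$. Since $\mathrm{fpr}(x, G/H) \leqs \mathrm{fpr}(x^m, G/H)$ for all $m$, we may assume $x$ has prime order, so it suffices to run through the conjugacy classes of elements of prime order in $G = \mathrm{P\O}_n^{\e}(q)$ as described in \cite[Section 3.5]{BG_book}.

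First I would set up the combinatorial interpretation: the orbit $G/H$ is identified with the set of $1$-dimensional nondegenerate subspaces $\la v \ra$ of $V$ (of a fixed discriminant type), and $\mathrm{fpr}(x, G/H)$ is the proportion of such points fixed by $x$. A point $\la v \ra$ is fixed by $x$ precisely when $v$ is an eigenvector of (a preimage of) $x$, so the fixed points lie in the eigenspaces of $x$. This reduces the problem to counting nondegenerate $1$-spaces inside the eigenspaces of $x$ and comparing with the total number of such $1$-spaces in $V$, which is of order roughly $q^{n-1}$. The worst cases will be elements with a large eigenspace — namely involutions of the form $(-I_{n-1}, I_1)$ or similar, and long root elements / transvection-like unipotent elements — and I would treat these extremal classes by direct counting, bounding the contribution of a $d$-dimensional eigenspace by something like $q^{d-1}/q^{n-2}$ up to bounded multiplicative constants coming from the discriminant constraint and the index $|{\rm O}_n^\e : \O_n^\e|$.

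The key steps, in order: (1) reduce to prime order $x$ and recall the classification of such classes from \cite{BG_book}; (2) express the number of fixed points of $x$ on $G/H$ as a sum over eigenspaces $V_\lambda$ of $x$ of the number of nondegenerate $1$-spaces of the correct type contained in $V_\lambda$ (for semisimple $x$) together with an analysis of the $1$-spaces fixed by a unipotent element, where $\la v \ra$ is fixed iff $(x-1)v \in \la v \ra$, i.e. $v$ lies in a $2$-dimensional $x$-invariant subspace or in $\ker(x-1)$; (3) bound each such count by $c\, q^{d-1}$ where $d$ is the relevant eigenspace dimension and $c$ absorbs small constants; (4) divide by $|G/H| \geqs c' q^{n-2}(q^{n/2\pm 1} \text{-ish factors})$ and check that in every case the ratio is at most $(q^2-1)^{-1}$, treating the handful of borderline classes (the reflection-type involution and the long root element) by a sharper explicit count. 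For small $n$ and $q$ one can fall back on the {\sc Magma} computation described in Remark \ref{r:matrix} to confirm the bound, consistent with the convention \eqref{e:list0} excluding $(n,q) \in \{(8,3),(8,5)\}$.

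The main obstacle I expect is the extremal involution $t = (-I_{n-1}, I_1)$ (or its close relatives with a hyperplane $(-1)$-eigenspace): here the $(-1)$-eigenspace has dimension $n-1$, so naively the number of fixed nondegenerate $1$-spaces is on the order of $q^{n-2}$, giving a fixed point ratio on the order of $q^{-1}$, which is \emph{not} as small as $(q^2-1)^{-1}$. The resolution must come from the discriminant/type restriction on $U$: only $1$-spaces of one specified discriminant type appear in $G/H$, and one must check whether the large eigenspace actually contains $1$-spaces of that type in the requisite proportion — or, more likely, this particular involution simply is not the worst case for \emph{this} specific subspace action and a more careful bookkeeping of which involution classes maximize the count is needed. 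I would handle this by carefully splitting into cases according to the isometry type of the eigenspaces and, if necessary, verifying the genuinely tight cases computationally for small rank and by an exact formula for the number of nondegenerate $1$-spaces of a given type in an orthogonal space for general rank.
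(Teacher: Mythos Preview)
Your plan is essentially the paper's: identify $\Omega=G/H$ with the set of $1$-spaces $\langle v\rangle$ for which $Q(v)$ is a square (so $|\Omega|=\tfrac{1}{2}q^{(n-2)/2}(q^{n/2}-\e)$), reduce to prime order, and for involutions count fixed $1$-spaces directly inside the eigenspaces. For unipotent and odd-order semisimple elements the paper mostly appeals to the bounds in \cite[Lemma~5.30]{BG_FPR}, doing an explicit $|x^G\cap H|/|x^G|$ calculation only for the class $(J_3,J_1^{n-3})$, whereas you propose to count by hand throughout; either route works.

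The one genuine confusion is your ``main obstacle''. In this section $n$ is even, so an element of the shape $(-I_{n-1},I_1)$ has determinant $-1$ and hence does not lie in $\mathrm{SO}_n^\e(q)$, let alone in $\O_n^\e(q)$; no element of $G$ has a hyperplane eigenspace. Every honest involution in $\O_n^\e(q)$ has both eigenspaces of even dimension (and the involutions in $G$ arising from elements $\hat{x}$ with $\hat{x}^2=-I$ fix no nondegenerate $1$-spaces at all, since their $\mathbb{F}_q$-eigenspaces, if any, are totally singular). Thus the largest possible eigenspace has dimension $n-2$. The paper identifies the extremal class as $x=(-I_2,I_{n-2})$ with both eigenspaces of minus type and computes
\[
|C_\Omega(x)|\leqs \tfrac{1}{2}(q+1)+\tfrac{1}{2}q^{(n-4)/2}\bigl(q^{(n-2)/2}+1\bigr),
\]
after which division by $|\Omega|$ gives the bound $(q^2-1)^{-1}$ directly. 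So the resolution of your obstacle is not a discriminant argument but simply that the offending element is not in $G$; once you restrict attention to involutions with an $(n-2)$-dimensional eigenspace, your counting scheme goes through with no further subtlety.
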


\begin{proof}
Let $V$ be the natural module for $G$ and set $\O = G/H$, where $H$ is an almost simple group with socle $\O_{n-1}(q)$. Then  
\[
|\O| = \frac{1}{2}q^{\frac{1}{2}(n-2)}(q^{\frac{1}{2}n}-\e)
\] 
and we may identify $\O$ with the set of $1$-dimensional subspaces $\la v \ra$ such that $Q(v)$ is a square in $\mathbb{F}_q$, where $Q$ is the defining quadratic form on $V$. It suffices to verify the bound for elements of prime order, so let us assume $x \in G$ has prime order $r$.

First assume $r=2$. Here $|C_{\O}(x)|$ is maximal when $\e=+$ and $x$ is an involution of the form $(-I_2,I_{n-2})$, where both eigenspaces are nondegenerate minus-type spaces. By calculating the number of appropriate $1$-spaces in each eigenspace we deduce that
\[
|C_{\O}(x)| \leqs \frac{1}{2}(q+1)+\frac{1}{2}q^{\frac{1}{2}(n-4)}(q^{\frac{1}{2}(n-2)}+1)
\]
and the result follows. Next assume $r=p$, so $x$ is unipotent. If $x$ does not have Jordan form $(J_3,J_1^{n-3})$ on $V$, then one can check that the bounds on $|C_{\O}(x)|$ given in the proof of \cite[Lemma 5.30]{BG_FPR} are sufficient. Now assume $x = (J_3,J_1^{n-3})$. There are two conjugacy classes of this form in $G$, and there are also two classes of elements in $H$ with Jordan form $(J_3,J_1^{n-4})$ on the natural module for $H$. The two $H$-classes are not fused in $G$, so 
\[
|x^G \cap H| = |x^H| \leqs \frac{|{\rm SO}_{n-1}(q)|}{2q^{n-3}|{\rm SO}_{n-4}^{+}(q)|},\;\; |x^G| = \frac{|{\rm SO}_n^{\e}(q)|}{2q^{n-2}|{\rm SO}_{n-3}(q)|}
\]
and thus
\[
{\rm fpr}(x,G/H) \leqs \frac{q^{(n-4)/2}+1}{q^{n/2}-\e} \leqs \frac{1}{q^2-1}.
\]
Finally, if $r \ne p$ is odd then it is easy to check that the bounds presented in the proof of \cite[Lemma 5.30]{BG_FPR} are sufficient.
\end{proof}

\begin{prop}\label{p:22}
We have $\Sigma_2(x) < 21/22$ for all nontrivial $x \in G$.
\end{prop}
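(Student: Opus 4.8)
The plan is to decompose $\Sigma_2(x)$ according to the dimension $m$ of the nondegenerate subspace $U$ stabilized by $H = G_U \in \mathcal{M}_2$, handle the bulk of the terms with the general subspace fixed point ratio bounds from \cite[Proposition 3.16]{GK} (exactly as in the proof of Proposition \ref{p:2}), and isolate the dominant $m=1$ contribution, which needs the sharper estimate recorded in Lemma \ref{l:fpr3}. First I would recall from \cite[Proposition 4.1.6]{KL} the number of $G$-classes of subspace stabilizers $G_U$ of each type: for each even $m$ with $2 \leqs m \leqs n-2$ there is a bounded number of classes (one for each Witt index of $U$, so at most two), and for each odd $m$ there is a unique class; summing these gives that $\mathcal{M}_2$ contains $O(n)$ subgroups, and more precisely one should track the small constants since we need the fairly tight bound $21/22$. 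I would also note that the cases $(n,q)=(8,3),(8,5)$ are excluded by \eqref{e:list0}, so we may assume $n \geqs 10$ or $q \geqs 7$ (and in fact $n \geqs 8$ with $q \geqs 7$, or $n \geqs 10$).

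Next I would split $\Sigma_2(x) = {\rm fpr}(x,G/H_1) + \Sigma_2'(x)$, where $H_1$ is the stabilizer of a nondegenerate $1$-space and $\Sigma_2'(x)$ collects the contributions from all $G_U$ with $\dim U \geqs 2$. For the $m=1$ term, Lemma \ref{l:fpr3} gives ${\rm fpr}(x,G/H_1) \leqs (q^2-1)^{-1}$ (noting there may be two classes of such stabilizers when $n$ is even — one for each square class of $Q(v)$ — so this term contributes at most $2(q^2-1)^{-1}$, and I would check whether Lemma \ref{l:fpr3} already accounts for both or whether a factor $2$ is needed). For the remaining terms with $2 \leqs m \leqs n-1$, I would feed the two regimes of \cite[Proposition 3.16]{GK} — the bound $(2q+1)q^{(1-n)/2} + q^{(m+1-n)/2} + q^{-m}$ for $2 \leqs m \leqs n-5$ and $(2q+1)q^{(1-n)/2} + q^{(2-m)/2} + q^{m-n}$ for $m \in \{n-3,n-1\}$ (and the analogous $m=n-2$, $m=n-1$ adjustments needed since $n$ is now even rather than odd as in Proposition \ref{p:2}) — and sum geometric series of the shape $\sum q^{-m}$, $\sum q^{(m+1-n)/2}$, $\sum q^{(2-m)/2}$, $\sum q^{m-n}$, each bounded by a constant multiple of $(q-1)^{-1}$ or $(q^{1/2}-1)^{-1}$, together with the $O(n)\cdot(2q+1)q^{(1-n)/2}$ term. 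Combining everything yields a bound of the form
\[
\Sigma_2(x) < c_1 n q^{(3-n)/2} + c_2 (q-1)^{-1} + c_3 (q^{1/2}-1)^{-1} + \cdots
\]
and one checks this is below $21/22$ for all admissible $(n,q)$, with the finitely many borderline pairs of small $n$ and small $q$ (e.g. $(8,7)$, $(10,3)$, $(8,9)$, $(8,11)$) verified either by the same estimates or directly with {\sc Magma} as in Remark \ref{r:matrix}.

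The main obstacle is the arithmetic tightness: unlike the comfortable margins in the exceptional-group arguments, here the target constant $21/22$ is close to $1$ because the odd-index subspace stabilizers are numerous and their fixed point ratios decay only polynomially in $q$. So the real work is bookkeeping — getting the constants in the class-number count of $\mathcal{M}_2$ exactly right, being careful about the parity of $m$ relative to $n$ (the $m=n-2$ and $m=n-1$ cases behave differently than in the odd-dimensional case of Proposition \ref{p:2}), and confirming that the handful of smallest $(n,q)$ not already excluded by \eqref{e:list0} still satisfy the bound, falling back on a direct {\sc Magma} computation for any that the crude estimates miss. The choice of the split value $21/22$ in this proposition (paired with $1/22$ in Proposition \ref{p:21}) is exactly what makes $\Sigma(x) = \Sigma_1(x) + \Sigma_2(x) < 1$, so no slack can be wasted.
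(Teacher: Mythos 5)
Your proposal follows essentially the same route as the paper: split off the $1$-space stabilizers (Lemma \ref{l:fpr3}), bound the remaining subspace stabilizers via \cite[Proposition 3.16]{GK}, count classes via \cite[Proposition 4.1.6]{KL}, sum geometric series, and treat the few tight $(n,q)$ separately. Two points of divergence are worth flagging. First, your class count has the parities reversed: in the even-dimensional orthogonal case it is the stabilizers of \emph{odd}-dimensional nondegenerate subspaces that come in two $G$-classes for each dimension (and the paper in fact uses a factor $2$ for the even-dimensional stabilizers as well), so the constants you would need to ``track carefully'' are slightly larger than in your sketch; since the true constants still give $\Sigma_2(x)<21/22$, this is a bookkeeping slip rather than a fatal error, but with a target this close to $1$ it must be fixed. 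Second, the borderline cases that survive the generic estimate are exactly $(n,q)=(10,3)$ and $(12,3)$ (the latter is missing from your list), and the paper does not resort to {\sc Magma} there: it invokes Maslova's precise classification \cite{Mas} of odd-index maximal subgroups to show that for these parameters $\mathcal{M}_2$ contains only a handful of specific subspace stabilizers (e.g.\ only type ${\rm O}_1(3)\perp{\rm O}_9(3)$ when $(n,q,\e)=(10,3,+)$), after which Lemma \ref{l:fpr3} and \cite[Proposition 3.16]{GK} finish easily. Your fallback of a direct computation for groups as large as ${\rm P\O}_{12}^{\pm}(3)$ is not obviously practical, so identifying this use of \cite{Mas} is the one substantive ingredient your outline does not supply; with those two corrections the argument matches the paper's.
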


\begin{proof}
Let $H$ be the stabilizer of a nondegenerate $m$-space and let $x \in G$ be nontrivial. First assume $m = 2\ell+1$ is odd, where $1 \leqs \ell \leqs \lfloor (n-2)/4 \rfloor$. By \cite[Proposition 4.1.6]{KL}, there are two conjugacy classes of subgroups of this type and \cite[Proposition 3.16]{GK} gives
\[
{\rm fpr}(x,G/H) < (2q+1)q^{\frac{1}{2}(2-n)}+q^{\frac{1}{2}(2\ell+2-n)}+q^{-2\ell-1}.
\]
Now Lemma \ref{l:fpr3} implies that ${\rm fpr}(x,G/H) \leqs (q^2-1)^{-1}$ if $m=1$ or $n-1$, so the contribution to $\Sigma_2(x)$ from stabilizers of odd-dimensional spaces is less than
\[
\a_1 = \sum_{\ell=1}^{\lfloor (n-2)/4 \rfloor}2\left((2q+1)q^{\frac{1}{2}(2-n)}+q^{\frac{1}{2}(2\ell+2-n)}+q^{-2\ell-1}\right) + 2(q^2-1)^{-1}.
\]
Similarly, the contribution from the stabilizers of even-dimensional spaces is less than
\[
\a_2 = \sum_{\ell=1}^{\lfloor n/4 \rfloor}2\left((2q+1)q^{\frac{1}{2}(2-n)}+q^{\frac{1}{2}(2\ell+2-n)}+q^{-2\ell}\right)
\]
and we deduce that
\begin{align*}
\Sigma_2(x) < & \; (n-1)(2q+1)q^{\frac{1}{2}(2-n)}+2q^{-2}a + 2q^{\frac{1}{2}(2\lfloor n/4\rfloor+2-n)}a \\
& +2q^{\frac{1}{2}(2\lfloor (n-2)/4\rfloor+2-n)}a+2(q^2-1)^{-1}
\end{align*}
where 
\[
a = \sum_{i=0}^{\infty}q^{-i} = \frac{q}{q-1}.
\]
In view of \eqref{e:list0}, one can check that this upper bound is sufficient unless $(n,q) = (10,3)$ or $(12,3)$. 

Suppose $(n,q) = (10,3)$. If $\e=+$ then the main theorem of \cite{Mas} implies that each $\mathcal{M}_2$ is of type ${\rm O}_1(3) \perp {\rm O}_9(3)$ and thus Lemma \ref{l:fpr3} yields $\Sigma_2(x) \leqs 2/(3^2-1)=1/4$. Similarly, if $\e=-$ then $\Sigma_2(x) = {\rm fpr}(x,G/H)$ with $H$ of type ${\rm O}_{2}^{-}(3) \perp {\rm O}_{8}^{+}(3)$ and the bound in \cite[Proposition 3.16]{GK} is sufficient. Finally, suppose $(n,q) = (12,3)$. If $\e=+$ then $\mathcal{M}_2 = \{H\}$ with $H$ of type ${\rm O}_{4}^{+}(3) \perp {\rm O}_{8}^{+}(3)$ and the result quickly follows. On the other hand, if $\e=-$ then there are $6$ subgroups in $\mathcal{M}_2$: two each of type ${\rm O}_m(3) \perp {\rm O}_{12-m}(3)$ with $m \in \{1,3\}$, together with subgroups of type ${\rm O}_{m}^{-}(3) \perp {\rm O}_{12-m}^{+}(3)$ for $m \in \{2,4\}$. A routine calculation working with the bounds in Lemma \ref{l:fpr3} and \cite[Proposition 3.16]{GK} now establishes the result.
\end{proof}

\begin{thm}\label{t:o_even}
The conclusion to Theorem \ref{t:main2} holds if $G = {\rm P\O}_n^{\e}(q)$, where $n \geqs 8$ is even. 
\end{thm}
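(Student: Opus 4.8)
The statement is essentially an assembly of the two preceding propositions, so the plan is short once Propositions \ref{p:21} and \ref{p:22} are available. Recall from the discussion after \eqref{e:sigma} that, because ${\rm fpr}(x,G/H) \leqs {\rm fpr}(x^m,G/H)$ for all $m$, it suffices to prove $\Sigma(x) < 1$ for all $x \in G$ of prime order; and since $n$ is even here, $G$ is never of the form ${\rm L}_n(3)$ with $n \geqs 7$ odd, so alternative (ii) of Theorem \ref{t:main2} does not arise and we only have to establish alternative (i).

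First I would clear the two pairs $(n,q) \in \{(8,3),(8,5)\}$ excluded in \eqref{e:list0}. These are handled by the direct {\sc Magma} computation described in Remark \ref{r:matrix} and Example \ref{e:magma}, which gives $\Sigma(x) \leqs 986/3731$ for $(n,q)=(8,3)$ and $\Sigma(x) \leqs 17629/203763$ for $(n,q)=(8,5)$, both well below $1$. For the remaining pairs I would invoke the partition $\mathcal{M} = \mathcal{M}_1 \cup \mathcal{M}_2$ and the splitting $\Sigma(x) = \Sigma_1(x) + \Sigma_2(x)$ fixed above. The structural point that makes this work is Theorem \ref{t:kon}: since $G = {\rm P\O}_n^{\e}(q)$ is neither a linear, unitary nor symplectic group, we have $N_G(P) = P$, so Lemma \ref{l:count} gives $n(H,P) = |N_G(P):N_H(P)| = 1$ for every $H \in \mathcal{M}$ --- each maximal overgroup of $P$ therefore contributes exactly one fixed point ratio term to $\Sigma(x)$, with no multiplicity. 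Combining Proposition \ref{p:21} ($\Sigma_1(x) < 1/22$) with Proposition \ref{p:22} ($\Sigma_2(x) < 21/22$) then yields $\Sigma(x) < 1$ for all nontrivial $x \in G$, which is alternative (i).

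The real work lives in the two propositions, not in this assembly. For $\mathcal{M}_1$ (subfield subgroups, stabilizers of orthogonal decompositions into isometric $2^k$-spaces, and the $\O_8^+(2)$ subgroups when $G = {\rm P\O}_8^+(q)$), the task is to count the conjugacy classes of each type correctly via \cite[Propositions 4.5.8, 4.2.15]{KL}, \cite{Mas} and \cite[Table 8.50]{BHR}, and then to absorb the resulting $O(\log n + \log\log q)$ factor against the very strong estimate ${\rm fpr}(x,G/H) < q^{5-n}$ of Lemma \ref{l:fpr2} --- which itself rests on the generic bound \eqref{e:fpr1} from \cite{Bur1,Bur2,Bur3,Bur4} together with the minimal class-size lower bound attained by a long root element. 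For $\mathcal{M}_2$ (stabilizers of nondegenerate subspaces, where \eqref{e:fpr1} is unavailable because these are subspace subgroups) the delicate step --- and what I expect to be the main obstacle --- is summing the sharper bounds of \cite[Proposition 3.16]{GK}, together with the $1$-space refinement in Lemma \ref{l:fpr3}, over all admissible subspace dimensions and checking that the resulting geometric-series estimate stays below $21/22$. This crude bound fails for $(n,q) = (10,3)$ and $(12,3)$, so those two cases must be treated by hand using the precise list of admissible subspace types from \cite{Mas}.
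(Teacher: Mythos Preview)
Your proposal is correct and follows exactly the paper's approach: the theorem has no separate proof in the paper because it is simply the assembly of the small-case {\sc Magma} computations for $(n,q)\in\{(8,3),(8,5)\}$ with Propositions \ref{p:21} and \ref{p:22} giving $\Sigma(x)=\Sigma_1(x)+\Sigma_2(x)<1/22+21/22=1$. Your summary of what lies inside those propositions is also accurate.
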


\subsection{Symplectic groups}\label{s:symp}

Let $G = {\rm PSp}_n(q)$, where $n \geqs 4$ and $q \geqs 3$ is odd. As usual, let $P$ be a Sylow $2$-subgroup of $G$ and let $H \in \mathcal{M}$ be a maximal overgroup of $P$ in $G$. By inspecting \cite{LS85,Mas} we see that one of the following holds:
\begin{itemize}\addtolength{\itemsep}{0.2\baselineskip}
\item[{\rm (a)}] $H$ is a subfield subgroup of type ${\rm Sp}_{n}(q_0)$, where $q=q_0^e$ for some odd prime $e$.
\item[{\rm (b)}] $H$ is the stabilizer of an orthogonal decomposition $V = V_1 \perp \cdots \perp V_b$, where the $V_i$ are nondegenerate $a$-spaces with $a = 2^k$, $k \geqs 1$.  
\item[{\rm (c)}] $H = G_U$ is the stabilizer in $G$ of a nondegenerate subspace $U$ of $V$ with $\dim U < n/2$.
\item[{\rm (d)}] $n=4$, $q=p \equiv \pm 3 \imod{8}$ and $H = 2^4.A_5$. 
\end{itemize} 

By Theorem \ref{t:kon} we have $n(H,P) \leqs |N_G(P):P| = 3^t$, where either $t=0$, or $q \equiv \pm 3 \imod{8}$ and $t$ is the number of nonzero digits in the binary expansion of $n$. In fact, we have the following result.

\begin{lem}\label{l:sp1}
Let $H \in \mathcal{M}$ be a maximal overgroup of $P$. Then $n(H,P)=1$.
\end{lem}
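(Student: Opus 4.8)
For $G = \mathrm{PSp}_n(q)$ with $n \geq 4$, $q$ odd, $P$ a Sylow $2$-subgroup, and $H \in \mathcal{M}$ a maximal overgroup of $P$, we have $n(H,P) = 1$.

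By Lemma \ref{l:count} we have $n(H,P) = |N_G(P):N_H(P)|$ with $N_H(P) = H \cap N_G(P)$, so it suffices to prove $N_G(P) \leqs H$. If $N_G(P) = P$ this is immediate since $P \leqs H$, so by Theorem \ref{t:kon}(ii) we may assume $q \equiv \pm 3 \imod{8}$ and $|N_G(P):P| = 3^{t(n)}$. Since $P \leqs H$ and $P \in {\rm Syl}_2(G)$ we have $P \in {\rm Syl}_2(H)$, and as $N_H(P) \leqs N_G(P)$ it is enough to show $|N_H(P):P| \geqs 3^{t(n)}$. Note also that $n(H,P)$ depends only on the $G$-conjugacy class of $H$, so in each case we are free to replace $H$ by a convenient conjugate and to choose a convenient Sylow $2$-subgroup contained in it. To fix notation, write $n = 2^{a_1}+\cdots+2^{a_t}$ in binary (so $t=t(n)$), fix an orthogonal decomposition $V = V_1 \perp \cdots \perp V_t$ of the natural module into nondegenerate subspaces with $\dim V_j = 2^{a_j}$, and take $P = P_1 \times \cdots \times P_t$ with $P_j \in {\rm Syl}_2({\rm Sp}(V_j))$; using $q \equiv \pm 3 \imod 8$ one checks via a $2$-adic valuation count (indeed $v_2(|{\rm Sp}_n(q)|) = 2n - t(n)$) and the standard structure of Sylow $2$-subgroups of symplectic groups (see \cite{Kon}) that the image of $P$ in $G$ is a Sylow $2$-subgroup. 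Applying Theorem \ref{t:kon} to ${\rm PSp}_{2^{a_j}}(q)$, choose $k_j \in N_{{\rm Sp}(V_j)}(P_j)$ of order $3$ and extend it to $V$ by the identity on $V_l$ for $l \neq j$; the $k_j$ pairwise commute, normalize $P$, and generate a subgroup $\cong C_3^t$ meeting $P$ trivially, so $N_G(P) = \la P, k_1, \ldots, k_t \ra$.

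I would then run through the four possibilities for $H$ described before Lemma \ref{l:sp1}. If $H$ is a subfield subgroup with $\soc(H) = {\rm PSp}_n(q_0)$, $q = q_0^e$, $e$ an odd prime, then $q_0 \equiv q \equiv \pm 3 \imod 8$, so the same valuation formula gives $|{\rm PSp}_n(q_0)|_2 = |G|_2$; hence $P \leqs {\rm PSp}_n(q_0)$ is a Sylow $2$-subgroup of it and Theorem \ref{t:kon} yields $|N_{{\rm PSp}_n(q_0)}(P):P| = 3^{t(n)}$, as required. If $H = {\rm Sp}_a(q) \wr S_b$ is an orthogonal-decomposition stabilizer with $a = 2^k$ and $b = n/a$ (so $t(b) = t(n)$), I would realize $P$ inside $H$ as $D \wr {\rm Syl}_2(S_b)$ with $D \in {\rm Syl}_2({\rm Sp}_a(q))$; writing $b = 2^{c_1}+\cdots+2^{c_t}$ and ${\rm Syl}_2(S_b) = {\rm Syl}_2(S_{2^{c_1}}) \times \cdots \times {\rm Syl}_2(S_{2^{c_t}})$ and fixing an order-$3$ element $d \in N_{{\rm Sp}_a(q)}(D)$, the base-group element $\delta_l$ equal to $d$ on the $2^{c_l}$ coordinates of the $l$-th block and to the identity elsewhere centralizes ${\rm Syl}_2(S_b)$ and normalizes $D^b$, hence normalizes $P$; the $\delta_l$ commute and have order $3$, giving $|N_H(P):P| \geqs 3^t$. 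If $H = G_U$ with $\dim U = m < n/2$, then oddness of $[G:H]$ forces the binary digits of $m$ and $n-m$ to be disjoint, so $m = \sum_{j \in S} 2^{a_j}$ for some $S$; replacing $H$ by a $G$-conjugate we may assume $U = \bigoplus_{j \in S} V_j$, and then $P \leqs H$ and each $k_j$ lies in ${\rm Sp}(V_j) \leqs {\rm Sp}(U) \times {\rm Sp}(U^\perp) = G_U = H$, so $N_G(P) = \la P, k_1, \ldots, k_t\ra \leqs H$. Finally, if $n=4$ and $H = 2^4.A_5$, then $O_2(H) \leqs P$ and $P/O_2(H) \in {\rm Syl}_2(A_5)$; since $H/O_2(H) \cong A_5$, a Frattini-type argument (for $N \normeq H$ a $2$-group one has $|N_H(P):P| = |N_{H/N}(P/N):P/N|$) gives $|N_H(P):P| = |N_{A_5}({\rm Syl}_2(A_5)):{\rm Syl}_2(A_5)| = 3 = |N_G(P):P|$. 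In every case $|N_H(P):P| \geqs 3^{t(n)}$, hence $N_H(P) = N_G(P) \leqs H$ and $n(H,P) = 1$.

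The obstacle here is bookkeeping rather than anything conceptual. The main points requiring care are: verifying in each case that the chosen $P$ really is a Sylow $2$-subgroup of $G$ and not merely of $H$ (this rests on the identity $v_2(|{\rm Sp}_n(q)|) = 2n - t(n)$, valid for $q \equiv \pm 3 \imod 8$, together with the classical description of Sylow $2$-subgroups of symplectic groups); the short check in the wreath-product case that the block-diagonal $3$-element normalizes the chosen Sylow subgroup of ${\rm Sp}_a(q) \wr S_b$; and isolating the precise structure of each $H \in \mathcal{M}$ used above — in particular $\soc(H)$ for subfield subgroups and $O_2(H)$ for the $\mathcal{C}_6$-subgroup — which is read off from \cite{LS85,Mas} and \cite{BHR,KL}.
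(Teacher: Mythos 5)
Your proof is correct, and its overall strategy is the same as the paper's: reduce via Lemma \ref{l:count} to showing $N_G(P) \leqs H$, assume $q \equiv \pm 3 \imod{8}$ by Theorem \ref{t:kon}, and then treat the four types of subgroups separately, producing explicit order-$3$ elements of $H$ that normalize $P$. The differences are local. For the subspace stabilizers (case (c)) the paper argues more cheaply and without touching $N_G(P)$ at all: as a $P$-module, $V$ is a direct sum of irreducible summands of pairwise distinct $2$-power dimensions, so $P$ fixes at most one subspace of any given dimension and hence lies in at most one conjugate of $G_U$; your route instead needs the binary-digit analysis of $m$ forced by oddness of $|G:G_U|$ and the containment $k_j \in G_U$, which is fine but longer. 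In the wreath-product case your blockwise elements $\delta_l$ are essentially the paper's construction made explicit (the paper first reduces to $b$ a $2$-power and then exhibits one order-$3$ element $z$ in the base group centralized by a Sylow $2$-subgroup of $S_b$), and in the $2^4.A_5$ case you supply a clean justification of what the paper merely asserts ($N_G(P) = P{:}3 < H$). Two small points to tidy: when a block has dimension $2$ and $q = 3$ you invoke Theorem \ref{t:kon} for ${\rm PSp}_2(3) \cong {\rm L}_2(3)$, which is not simple and so not covered by that theorem — but the required order-$3$ element normalizing $Q_8$ inside ${\rm SL}_2(3)$ exists trivially; and the passage between ${\rm Sp}_n(q)$ and ${\rm PSp}_n(q)$ (identifying $|N(P):P|$ upstairs and downstairs) deserves a sentence, though the paper is equally brisk about it.
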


\begin{proof}
In view of Theorem \ref{t:kon}, we may assume $q \equiv \pm 3 \imod{8}$. 
In (a) we have $H = {\rm PSp}_n(q_0)$, so Theorem \ref{t:kon} implies that $|N_G(P):P| = |N_H(P):P|$ and thus $n(H,P) = 1$. Similarly, $|P|=2^6$ and $N_G(P) = P{:}3 < H$ in case (d), whence $n(H,P)=1$. 

Next consider (c). As a $P$-module we have $V = V_1 \oplus \cdots \oplus V_t$, where each $V_i$ is an irreducible submodule of dimension $2^{a_i}$ with $a_1>a_2 > \cdots >a_t \geqs 0$. It follows that $P$ fixes at most one subspace of any given dimension and thus $n(H,P)=1$. 

Finally, let us assume $n = ab$ and $H$ is the stabilizer of an orthogonal decomposition $V = V_1 \perp \cdots \perp V_b$, where each $V_i$ is a nondegenerate $a$-space and $a=2^k$ with $k \geqs 1$. Here it is convenient to work in the quasisimple group $G={\rm Sp}_n(q)$, in which case $H = {\rm Sp}_a(q) \wr S_{b}$ and it suffices to show that $N_G(P) \leqs H$. If we write $b = 2^{b_1}+\cdots+2^{b_t}$ with $b_1>b_2> \cdots > b_t \geqs 0$ then $P = P_1 \times \cdots \times P_t$, where $P_i$ is a Sylow $2$-subgroup of $H_i = {\rm Sp}_{a}(q) \wr S_{2^{b_i}}$ and $P_i$ acts transitively on the set of $2^{b_i}$ $a$-spaces in the corresponding orthogonal decomposition preserved by $H_i$. Therefore, we may assume $b$ is a $2$-power and $P$ acts transitively on $\{V_1, \ldots, V_b\}$. Set $P_0 = P \cap {\rm Sp}_a(q)^b$ and observe that there exists an element $z \in N_G(P_0) \cap {\rm Sp}_a(q)^b$ of order $3$, which is centralized by a Sylow $2$-subgroup of $S_b$. Therefore, $z \in N_G(P) \cap H$ and thus $N_G(P) = P{:}\la z \ra \leqs H$ as required.
\end{proof}

For now we will assume $n \geqs 6$, postponing the analysis of the case $n=4$ to the end of the section. Write $\mathcal{M} = \mathcal{M}_1 \cup \mathcal{M}_2$ and $\Sigma(x) = \Sigma_1(x) + \Sigma_2(x)$, where $\mathcal{M}_1$ comprises the subgroups of type (a) and (b) as above.

With the aid of {\sc Magma}, we compute $\Sigma(x) \leqs \a$ in the following cases:
\[
\begin{array}{c|ccccc}
(n,q) & (6,3) & (6,5) & (8,3) & (8,5) & (10,3) \\ \hline 
\a & 2/13 & 10/217 & 6/205 & 14/4069 & 826/7381
\end{array}
\]
Therefore, for the remainder we will assume
\begin{equation}\label{e:list2}
(n,q) \not\in \{(6,3),(6,5),(8,3),(8,5),(10,3)\}.
\end{equation}

\begin{lem}\label{l:fpr4}
Suppose $n \geqs 6$, $H \in \mathcal{M}_1$ and $x \in G$ is nontrivial and not a transvection. Then 
\[
{\rm fpr}(x,G/H) < q^{\ell-n},
\]
where $\ell =6$ if $H$ is of type ${\rm Sp}_{n/2}(q) \wr S_2$, otherwise $\ell=4$.
\end{lem}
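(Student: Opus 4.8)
The plan is to reduce to the generic fixed point ratio bound from \cite{Bur1} and then supply a good lower bound on $|x^G|$, tailored to the two shapes of $H \in \mathcal{M}_1$. As in the proofs of Lemmas \ref{l:fpr1} and \ref{l:fpr2}, we may assume $x$ has prime order. Since $H$ is not a subspace subgroup (the subgroups in $\mathcal{M}_1$ are subfield subgroups of type $\mathrm{Sp}_n(q_0)$ or imprimitive subgroups stabilizing an orthogonal decomposition into $2^k$-spaces), the main theorem of \cite{Bur1} applies and gives
\[
{\rm fpr}(x,G/H) < |x^G|^{-\frac12 + \frac1n}
\]
provided $x$ is not a transvection and provided we are outside the short list of exceptional small cases in \cite{Bur1}; the restriction \eqref{e:list2} together with $n \geqs 6$ is designed precisely to avoid those. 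So the task becomes: show $|x^G|^{-\frac12 + \frac1n} < q^{\ell - n}$, i.e. $|x^G| > q^{(n-\ell)\cdot\frac{2n}{n-2}}$, with $\ell = 6$ when $H$ is of type $\mathrm{Sp}_{n/2}(q)\wr S_2$ and $\ell = 4$ otherwise.

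The key step is to identify the smallest class $|x^G|$ among elements of prime order that are not transvections, using the description of such classes in \cite[Section 3.5]{BG_book}. For symplectic groups in odd characteristic the minimal non-transvection class is a unipotent class with Jordan form $(J_2^2, J_1^{n-4})$ (a ``long root'' type element, i.e.\ the product-of-two-transvections class), whose centralizer order is readily computed; this yields a bound of the shape $|x^G| \geqs c\, q^{2n-4}$ for an explicit small constant $c$, with equality attained by that class. One then checks the resulting inequality $c\, q^{2n-4} > q^{(n-\ell)\cdot\frac{2n}{n-2}}$ for all odd $q \geqs 3$ and all relevant $n$. When $H$ is of type $\mathrm{Sp}_{n/2}(q)\wr S_2$ one is allowed the weaker conclusion with $\ell = 6$, which is what makes the estimate go through for small $n$ (notably $n=6$, where $\frac{2n}{n-2} = 3$ and one needs $|x^G| > q^{3(n-\ell)}$); for all other types in $\mathcal{M}_1$ the factor $q^{-n}$ in the target is not needed in full strength, so $\ell = 4$ suffices. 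The cases left out by $n \geqs 6$ and \eqref{e:list2} have already been dispatched by direct computation in {\sc Magma}.

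The main obstacle is the bookkeeping at the boundary: verifying that, for the smallest admissible $n$ and $q$ (after removing the cases in \eqref{e:list2}), the numerical inequality relating the minimal class size to $q^{\ell - n}$ genuinely holds, and confirming that the relevant class really is the $(J_2^2, J_1^{n-4})$ class rather than, say, a semisimple class of prime order $r \mid q\pm1$ or an element of odd order $r \ne p$ — one must check that all of these have larger (or at worst comparable, but still sufficient) centralizer-index. This is routine given \cite[Section 3.5]{BG_book} but needs care, exactly as in Lemmas \ref{l:fpr1} and \ref{l:fpr2}; no genuinely new difficulty arises, and the transvection case is explicitly excluded from the statement because transvections have anomalously small classes and are treated separately.
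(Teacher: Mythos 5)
Your overall strategy is the paper's: combine the fixed point ratio bound from \cite{Bur1} with a lower bound of order $q^{2n-4}$ on the size of the smallest non-transvection class of prime order. But two specific claims are wrong as written. First, the extremal class is not the unipotent class $(J_2^2,J_1^{n-4})$: in ${\rm Sp}_n(q)$ that class has dimension $2n-2$, so size roughly $q^{2n-2}$, whereas the semisimple involution $x=(-I_2,I_{n-2})$, with centralizer ${\rm Sp}_2(q)\times {\rm Sp}_{n-2}(q)$, has class size $q^{n-2}(q^n-1)/(q^2-1)>q^{2n-4}$, and this is the class attaining the minimum (and the one the paper uses). Your bound $|x^G|\geq c\,q^{2n-4}$ is of the correct shape, so the numerics survive, but your assertion that equality holds for the unipotent class is false, and the true minimizer is precisely one of the semisimple classes you relegate to a routine check.

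Second, and more seriously, your explanation of the case $\ell=6$ is not right. The subgroup ${\rm Sp}_{n/2}(q)\wr S_2$ exists only when $n/2$ is even, so $n\geq 8$; the case $n=6$ you invoke does not arise, and the weaker exponent is not there to rescue small $n$. The real reason is that the main theorem of \cite{Bur1} does not give the exponent $-\frac{1}{2}+\frac{1}{n}$ for this subgroup type: it is one of the listed exceptional cases, and only ${\rm fpr}(x,G/H)<|x^G|^{-\frac{1}{2}+\frac{2}{n}}$ is available there. Your blanket application of $|x^G|^{-\frac{1}{2}+\frac{1}{n}}$ to every $H\in\mathcal{M}_1$ would, if legitimate, prove $\ell=4$ in all cases, which is stronger than the lemma and not supported by the citation. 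With the correct exponent, the same lower bound $|x^G|>q^{2n-4}$ gives ${\rm fpr}(x,G/H)<q^{(2n-4)(-\frac{1}{2}+\frac{2}{n})}=q^{6-n-8/n}<q^{6-n}$, which is exactly how the paper handles the ${\rm Sp}_{n/2}(q)\wr S_2$ case; the non-wreath cases go through with the exponent $-\frac{1}{2}+\frac{1}{n}$ as you describe.
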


\begin{proof}
First assume $H$ is not of type ${\rm Sp}_{n/2}(q) \wr S_2$. Here \eqref{e:fpr1} holds by the main theorem of \cite{Bur1} and the result follows since 
\[
|x^G| \geqs \frac{|{\rm Sp}_n(q)|}{|{\rm Sp}_{n-2}(q)||{\rm Sp}_{2}(q)|} = \frac{q^{n-2}(q^n-1)}{q^2-1},
\]
with equality if $x = (-I_2,I_{n-2})$. Similarly, if $H$ is of type ${\rm Sp}_{n/2}(q) \wr S_2$ then $n \geqs 8$, 
\[
{\rm fpr}(x,G/H) < |x^G|^{-\frac{1}{2}+\frac{2}{n}}
\]
by \cite{Bur1} and the result follows once again by applying the above lower bound on $|x^G|$.
\end{proof} 

\begin{prop}\label{p:31}
If $n \geqs 6$ then $\Sigma_1(x) < 1/43$ for all nontrivial $x \in G$.
\end{prop}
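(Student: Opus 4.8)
The plan is to combine Lemma~\ref{l:sp1}, which gives $n(H,P)=1$ for every $H\in\mathcal{M}$, with the fixed point ratio bound of Lemma~\ref{l:fpr4} and a separate elementary treatment of transvections. Since $n(H,P)=1$, the quantity $\Sigma_1(x)$ is just the sum of ${\rm fpr}(x,G/H)$ over one representative of each $G$-class of subgroups in $\mathcal{M}_1$, so the first step is to count these classes. The subgroups of type (a) are subfield subgroups ${\rm Sp}_n(q_0)$ with $q=q_0^e$ and $e$ an odd prime divisor of $f$; there is a unique $G$-class for each such $e$ by \cite[Proposition 4.5.8]{KL}, hence at most $\log\log q$ of them, and they occur only when $f$ has an odd prime divisor, which forces $q\geqs 27$. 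The subgroups of type (b) have the form ${\rm Sp}_a(q)\wr S_{n/a}$ with $a=2^k$, $k\geqs 1$; for each divisor $a=2^k\geqs 2$ of $n$ there is a single $G$-class by \cite{KL}, and there are fewer than $\log n$ such values of $a$. The distinguished type ${\rm Sp}_{n/2}(q)\wr S_2$ appearing in Lemma~\ref{l:fpr4} arises only when $n/2$ is a power of $2$ (so $n\geqs 8$), and then there is exactly one such class.

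Next I would dispose of the case where $x$ is not a transvection. Here Lemma~\ref{l:fpr4} gives ${\rm fpr}(x,G/H)<q^{4-n}$ for every $H\in\mathcal{M}_1$, the only possible exception being a subgroup of type ${\rm Sp}_{n/2}(q)\wr S_2$, for which ${\rm fpr}(x,G/H)<q^{6-n}$. Summing over the classes above yields a bound of the shape
\[
\Sigma_1(x) < \big(\delta\log\log q + \log n\big)\,q^{4-n} + \varepsilon\, q^{6-n},
\]
where $\delta\in\{0,1\}$ is $1$ only if subfield subgroups are present (so $q\geqs 27$ then) and $\varepsilon\in\{0,1\}$ is $1$ only if $n/2$ is a power of $2$. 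Invoking the exclusions in \eqref{e:list2} (so $q\geqs 7$ when $n\in\{6,8\}$, $q\geqs 5$ when $n=10$, and $q\geqs 3$ when $n\geqs 12$), a short case analysis shows the right-hand side is less than $1/43$; the extremal instance is $(n,q)=(8,7)$, where $\delta=0$, there are exactly two type (b) classes (of types ${\rm Sp}_2(q)\wr S_4$ and ${\rm Sp}_4(q)\wr S_2$), and the bound becomes $q^{-4}+q^{-2}=50/2401<1/43$.

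It remains to handle a transvection $x$ (Jordan form $(J_2,J_1^{n-2})$ on the natural module), where Lemma~\ref{l:fpr4} does not apply and I would compute the relevant fixed point ratios by hand. One has $|x^G|=(q^n-1)/2$. If $H={\rm Sp}_n(q_0)$ is a subfield subgroup then, since $e$ is odd, a nonsquare of $\mathbb{F}_{q_0}$ stays a nonsquare in $\mathbb{F}_q$, so the two $H$-classes of transvections are not fused in $G$; hence $|x^G\cap H|=(q_0^n-1)/2$ and ${\rm fpr}(x,G/H)<q_0^n/q^n\leqs q^{-2n/3}$. If $H$ is of type ${\rm Sp}_a(q)\wr S_{n/a}$ then, because $p$ is odd and $a\geqs 2$, any transvection in $H$ has trivial image in $S_{n/a}$ (a nontrivial image would be a $p$-element of $S_{n/a}$, forcing $\dim[V,x]\geqs (p-1)a>1$), so it lies in one of the $n/a$ symplectic factors; the same fusion argument then gives $|x^G\cap H|=(n/a)(q^a-1)/2$ and ${\rm fpr}(x,G/H)<(n/a)q^{a-n}$. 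Summing these contributions, and noting that the largest value of $a$ that occurs is the $2$-part of $n$ (at most $n/3$ unless $n$ is a power of $2$, and at most $n/2$ in general), shows $\Sigma_1(x)$ is comfortably below $1/43$ for every $(n,q)$ permitted by \eqref{e:list2}; the transvection case is much less tight than the generic one.

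The main obstacle is bookkeeping at the boundary rather than anything conceptual: the generic bound is genuinely tight at $(n,q)=(8,7)$, so the argument really does rely on knowing that there is exactly one $G$-class of subgroups of type ${\rm Sp}_4(q)\wr S_2$ (a second class would push $\Sigma_1(x)$ past $1/43$), and on the {\sc Magma} computations behind \eqref{e:list2} that remove $(8,3)$ and $(8,5)$. Beyond that, the proof is a direct combination of the class counts above with fixed point ratio bounds already established.
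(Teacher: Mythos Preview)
Your proposal is correct and follows essentially the same route as the paper: count the $G$-classes in $\mathcal{M}_1$, use $n(H,P)=1$ from Lemma~\ref{l:sp1}, apply Lemma~\ref{l:fpr4} for non-transvections, and compute the transvection fixed point ratios directly. The paper organizes the bookkeeping slightly differently---splitting $\Sigma_1(x)=\a(x)+\b(x)$ into subfield and imprimitive contributions and aiming for $\a(x)<1/200$, $\b(x)<1/45$ separately---but the ingredients and the tight case $(n,q)=(8,7)$ are the same; one cosmetic slip is that the relevant reference for the unique subfield class in the symplectic case is \cite[Proposition~4.5.4]{KL} rather than 4.5.8.
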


\begin{proof}
Let $x \in G$ be nontrivial and write $\Sigma_1(x) = \a(x)+\b(x)$, where $\a(x)$ is the contribution to $\Sigma_1(x)$ from subgroups of type (a). We will show that $\a(x)<1/200$ and $\b(x)<1/45$, which implies that $\Sigma_1(x) < 1/43$ as required.

First assume $H$ is a subfield subgroup of type ${\rm Sp}_n(q_0)$, where $q=q_0^e$ and $e \geqs 3$ is a prime. Here there are at most $\log\log q$ possibilities for $e$ and a unique $G$-class for each $e$ (see \cite[Proposition 4.5.4]{KL}). In addition, $n(H,P)=1$ by Lemma \ref{l:sp1}.   

Suppose $x$ is a transvection. There are two conjugacy classes of transvections in both $G$ and $H$, and the two $H$-classes are not fused in $G$ (see the proof of \cite[Proposition 5.5.8]{BG_book}, for example). Therefore,
\[
{\rm fpr}(x,G/H) = \frac{|x^G \cap H|}{|x^G|} = \frac{q_0^n-1}{q^n-1} < 2q^{-\frac{2}{3}n}
\]
and thus $\a(x) < 2q^{-2n/3}\log\log q < 1/200$. On the other hand, if $x$ is not a transvection then Lemma \ref{l:fpr4} yields $\a(x) < q^{4-n}\log\log q < 1/200$. 

For the remainder, let us assume $H \in \mathcal{M}_1$ is a subgroup of type ${\rm Sp}_{a}(q) \wr S_{n/a}$, where $a = 2^k$ and $k \geqs 1$. By \cite[Proposition 4.2.10]{KL}, there is a unique class of such subgroups for each $k$ and Lemma \ref{l:sp1} gives $n(H,P)=1$. Also note that there are at most $\log(n/2)$ possibilities for $a$. 

If $x$ is a transvection then $|x^G| = (q^n-1)/2$ and $H$ contains $n(q^a-1)/a$ transvections, whence
\[
{\rm fpr}(x,G/H) \leqs \frac{2n}{a}\left(\frac{q^a-1}{q^n-1}\right)
\]
and in view of \eqref{e:list2} we deduce that
\[
\b(x) < \left(\frac{4}{q^{n/2}+1}\right)\log(n/2)< \frac{1}{45}.
\]
Now assume $x$ is not a transvection. If $n \equiv 2 \imod{4}$ then $a=2$ is the only possibility and thus Lemma \ref{l:fpr4} implies that $\b(x) < q^{4-n} < 1/45$. On the other hand, if $n \equiv 0 \imod{4}$ then $\b(x) < q^{4-n}(\log(n/2)-1)+q^{6-n}$ and once again this bound is sufficient.
\end{proof}

\begin{prop}\label{p:33}
If $n \geqs 6$ then $\Sigma_2(x) < 42/43$ for all nontrivial $x \in G$.
\end{prop}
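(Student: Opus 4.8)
The plan is to argue exactly as in the proofs of Propositions \ref{p:2} and \ref{p:22}. Every $H \in \mathcal{M}_2$ is of type ${\rm Sp}_m(q) \perp {\rm Sp}_{n-m}(q)$ for some even integer $m$ with $2 \leqs m < n/2$, and by \cite[Proposition 4.1.3]{KL} there is a unique $G$-class of subgroups for each such $m$; together with Lemma \ref{l:sp1} (which gives $n(H,P) = 1$) this shows $\Sigma_2(x) = \sum_m {\rm fpr}(x,G/H_m)$, where $H_m$ runs over the $\lfloor (n-2)/4 \rfloor$ subgroups just described. (If Maslova's theorem \cite{Mas} shows that fewer values of $m$ actually give odd-index maximal subgroups, so much the better.) Since ${\rm fpr}(x,G/H) \leqs {\rm fpr}(x^k,G/H)$ for all $k$, it is enough to bound $\Sigma_2(x)$ for $x$ of prime order.

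First I would bound ${\rm fpr}(x,G/H_m)$ for $x$ of prime order using the subspace estimate of \cite[Proposition 3.16]{GK}: since $m < n/2$ we are always in the ``small subspace'' regime, so the bound has leading term $q^{-m}$ together with error terms decaying geometrically in $q$ (of the shape $(2q+1)q^{(1-n)/2}$ and $q^{(m+1-n)/2}$, in analogy with the orthogonal cases). Summing over the admissible $m$: the $q^{-m}$ terms contribute at most $\sum_{i \geqs 1} q^{-2i} < (q^2-1)^{-1} \leqs \tfrac{1}{8}$; the terms $q^{(m+1-n)/2}$ form a geometric progression dominated by its $m = 2$ member; and the term $(2q+1)q^{(1-n)/2}$, repeated $\lfloor (n-2)/4 \rfloor$ times, contributes $O(n\,q^{(3-n)/2})$. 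For $n \geqs 6$ and the values of $q$ permitted by \eqref{e:list2} these add to strictly less than $42/43$, with $n = 6$ (where $m = 2$ is the only value and the estimate is tightest) the critical case. If transvections are not already covered by the cited bound, they are dealt with by a direct count: a symplectic transvection lies in $G_U$ precisely when its centre lies in $U$ or in $U^\perp$, so $|x^G \cap H_m| \leqs (q^m-1)+(q^{n-m}-1)$ against $|x^G| = (q^n-1)/2$, and the resulting contribution to $\Sigma_2(x)$ is again bounded by a convergent geometric series comfortably below $42/43$.

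The part needing the most care is the small cases: when $n$ and $q$ are both small the error terms in the GK bound are not negligible and the crude estimate above can approach $42/43$. For $n = 6$ and $n = 8$ there is only the $m = 2$ subgroup, so these follow from the single GK bound once $(n,q)$ avoids \eqref{e:list2}; the genuine work, if any, lies with a handful of pairs such as $(n,q) = (12,3)$ or $(14,3)$. For each of these I would invoke Maslova's classification \cite{Mas} to pin down exactly which stabilizers $G_U$ are odd-index maximal (frequently this eliminates most of the $H_m$), falling back on a {\sc Magma} computation of $\Sigma_2(x)$ as in Remark \ref{r:matrix}, or on a sharper element-by-element fixed point ratio estimate in the spirit of Lemma \ref{l:fpr3} (the cases to check being involutions, the low unipotent classes, and semisimple elements of odd order, where the centraliser is largest), in any residual case. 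Throughout, the $m = 2$ contribution is the dominant one, so that is where the estimates must be kept sharp.
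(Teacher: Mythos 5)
Your overall strategy is exactly the paper's: list the subgroups in $\mathcal{M}_2$ as stabilizers of nondegenerate even-dimensional $m$-spaces with $m<n/2$, use \cite[Proposition 4.1.3]{KL} for uniqueness of classes and Lemma \ref{l:sp1} for $n(H,P)=1$, and then sum the subspace bound of \cite[Proposition 3.16]{GK} over $m$. However, there is one concrete problem: you have guessed the shape of the GK bound by analogy with the orthogonal case, and the guess is too strong. For $G={\rm PSp}_n(q)$ and $H$ the stabilizer of a nondegenerate $2\ell$-space, the bound actually available (and the one the paper uses) is
\[
{\rm fpr}(x,G/H) < 2q^{\frac{1}{2}(4-n)}+q^{-\frac{1}{2}n}+q^{-\ell}+q^{2\ell-n},
\]
whose dominant term is $q^{-\ell}=q^{-m/2}$, not $q^{-m}$. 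Consequently your claim that the main contribution sums to at most $(q^2-1)^{-1}\leqs 1/8$ is not justified; with the correct bound the corresponding geometric sum is $\sum_{\ell\geqs 1}q^{-\ell}<1/(q-1)$, which is $1/2$ when $q=3$. The conclusion survives this correction comfortably: summing the true bound over $1\leqs\ell\leqs\lfloor n/4\rfloor$ gives roughly $\frac{1}{4}n(2q^2+1)q^{-n/2}+\frac{1}{q-1}+q^{2\lfloor n/4\rfloor-n}\cdot\frac{q^2}{q^2-1}$, which is below $42/43$ for every $(n,q)$ outside the excluded list \eqref{e:list2} -- including $(12,3)$ and $(14,3)$, so the extra case analysis, {\sc Magma} fallback and separate transvection count you sketch are unnecessary here (the GK bound applies to all elements of prime order). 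So the route is the paper's, but the key quantitative input must be quoted correctly and the numerics redone accordingly.
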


\begin{proof}
Let $H = G_U$ be the stabilizer of a nondegenerate $m$-space $U$, where $m<n/2$ is even. By \cite[Proposition 4.1.3]{KL}, there is a unique $G$-class of subgroups for each $m$. In addition, Lemma \ref{l:sp1} gives $n(H,P)=1$.

Set $m = 2\ell$. Then \cite[Proposition 3.16]{GK} gives
\[
{\rm fpr}(x,G/H) < 2q^{\frac{1}{2}(4-n)}+q^{-\frac{1}{2}n}+q^{-\ell}+q^{2\ell-n}
\]
and thus
\begin{align*}
\Sigma_2(x) & <  \sum_{\ell=1}^{\lfloor n/4 \rfloor} \left(2q^{\frac{1}{2}(4-n)}+q^{-\frac{1}{2}n}+q^{-\ell}+q^{2\ell-n}\right) \\
& < \frac{1}{4}n(2q^2+1)q^{-\frac{1}{2}n}+q^{-1}\sum_{i=0}^{\infty} q^{-i} + q^{2\lfloor n/4 \rfloor-n}\sum_{i=0}^{\infty} q^{-2i} \\
& < \frac{1}{4}n(2q^2+1)q^{-\frac{1}{2}n}+\frac{1}{q-1}+q^{2\lfloor n/4 \rfloor-n}\left(\frac{q^2}{q^2-1}\right).
\end{align*}
One can check that this bound is sufficient.
\end{proof}

By combining Propositions \ref{p:31} and \ref{p:33}, we obtain the following result.

\begin{thm}\label{t:symp6}
The conclusion to Theorem \ref{t:main2} holds if $G = {\rm PSp}_n(q)$ and $n \geqs 6$. 
\end{thm}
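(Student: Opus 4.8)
The plan is to read Theorem \ref{t:symp6} off from Propositions \ref{p:31} and \ref{p:33}, after first clearing away the small cases collected in \eqref{e:list2}. Since ${\rm PSp}_n(q)$ never occurs in part (ii) of Theorem \ref{t:main2}, what has to be shown is conclusion (i): $\Sigma(x) < 1$ for every $x \in G$ of prime order. As noted in Section \ref{s:intro}, the monotonicity ${\rm fpr}(x,G/H) \leqs {\rm fpr}(x^m,G/H)$ reduces matters to prime order, though in the symplectic case the two propositions are in fact valid for all nontrivial $x$.

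First I would dispose of the five pairs $(n,q)$ appearing in \eqref{e:list2}: for each of these, the {\sc Magma} computation described in Remark \ref{r:matrix} already yields $\Sigma(x) \leqs \a < 1$ for all nontrivial $x \in G$, with $\a$ taken from the displayed table. For every other $(n,q)$ with $n \geqs 6$, the decomposition $\mathcal{M} = \mathcal{M}_1 \cup \mathcal{M}_2$ gives $\Sigma(x) = \Sigma_1(x) + \Sigma_2(x)$, and Propositions \ref{p:31} and \ref{p:33} combine to give
\[
\Sigma(x) < \frac{1}{43} + \frac{42}{43} = 1
\]
for all nontrivial $x \in G$. This is exactly conclusion (i) of Theorem \ref{t:main2} for $G = {\rm PSp}_n(q)$ with $n \geqs 6$.

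I do not expect any real obstacle at this stage, since the substantive work is already done: the classification of the odd-index maximal overgroups of $P$ via \cite{LS85,Mas} (the four types (a)--(d)), the verification that $n(H,P) = 1$ for every $H \in \mathcal{M}$ via Lemma \ref{l:sp1} and Theorem \ref{t:kon}, and the fixed point ratio inputs behind the two propositions --- the bound \eqref{e:fpr1} from \cite{Bur1} for the non-subspace subgroups in $\mathcal{M}_1$, and the subspace estimate of \cite[Proposition 3.16]{GK} for the subgroups in $\mathcal{M}_2$. The one point that needs care is bookkeeping: one must confirm that the constants $1/43$ and $42/43$ were chosen to be genuinely complementary (so that the transvection contributions and the several $G$-classes of wreath-product subgroups in $\mathcal{M}_1$ are entirely absorbed into $\Sigma_1$), and that the hypothesis $n \geqs 6$ is precisely what rules out the type (d) subgroup $2^4.A_5$, which arises only for $n = 4$ and is treated separately at the end of the section.
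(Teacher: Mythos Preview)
Your proposal is correct and matches the paper's approach exactly: the paper simply states that Theorem \ref{t:symp6} follows by combining Propositions \ref{p:31} and \ref{p:33}, with the small cases in \eqref{e:list2} already disposed of by the {\sc Magma} computations recorded just before those propositions. Your additional remarks on bookkeeping and the exclusion of the type (d) subgroup for $n \geqs 6$ are accurate observations about why the decomposition $\Sigma(x) = \Sigma_1(x) + \Sigma_2(x)$ is exhaustive, but no further argument is needed.
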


Finally, we handle the $4$-dimensional symplectic groups. 

\begin{thm}\label{t:symp4}
The conclusion to Theorem \ref{t:main2} holds if $G = {\rm PSp}_4(q)$.
\end{thm}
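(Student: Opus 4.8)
The plan is to mirror the strategy used for the higher-dimensional symplectic groups but to handle the small rank and the small fields with more care, since the generic fixed point ratio bounds are too weak when $n=4$. First I would dispose of the smallest fields using {\sc Magma} as in Remark \ref{r:matrix}: for $q \in \{3,5,7,9,11,13\}$ (say) one computes $\max\{\Sigma(x) : 1 \ne x \in G\}$ directly by constructing the conjugacy classes of ${\rm Sp}_4(q)$ and the list of maximal subgroups of odd index, and checks that this maximum is less than $1$. Note that for $q=3$ one has the exceptional isomorphism ${\rm PSp}_4(3) \cong {\rm U}_4(2)$, and in any case the bound $|N_G(P):P| = 3$ when $q \equiv \pm3 \imod 8$ must be taken into account via Lemma \ref{l:count}; for the remaining congruence classes $N_G(P)=P$ by Theorem \ref{t:kon}, so $n(H,P)=1$.

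For $q$ larger than the computational threshold, I would enumerate $\mathcal{M}$ using \cite{LS85,Mas}: the odd-index maximal subgroups of ${\rm PSp}_4(q)$ are (a) subfield subgroups ${\rm PSp}_4(q_0)$ with $q=q_0^e$, $e$ an odd prime; (b) the stabilizer of an orthogonal decomposition into two nondegenerate $2$-spaces, i.e. of type ${\rm Sp}_2(q) \wr S_2$; (c) when $q=p\equiv\pm3\imod 8$, the subgroup $2^4.A_5$ (which only occurs for a handful of small $p$ already covered computationally, so can be ignored here); and possibly the ${\rm O}_4^{\pm}(q)$-type subgroups coming from the exceptional embedding of ${\rm Sp}_4$ in ${\rm SO}_5$ / the graph automorphism — these need to be tracked carefully from Maslova's tables. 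Crucially there is \emph{no} subspace subgroup of type (c) from the generic list (a nondegenerate $m$-space with $m<n/2$ forces $m=1$, which is not of odd index here in general), so unlike the $n\geq 6$ case the $\Sigma_2$ term is absent or tiny. For each surviving type I would record the number of $G$-classes (at most $\log\log q$ for subfield subgroups, a bounded constant for the rest, via \cite{KL,BHR}) and use Lemma \ref{l:sp1}-type reasoning together with Theorem \ref{t:kon} to show $n(H,P)=1$, except possibly picking up a factor of $3$ when $q\equiv\pm3\imod 8$, which is harmless since $q$ is then large.

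The fixed point ratio input is the main obstacle, because for $n=4$ the exponent $-\tfrac12+\tfrac1n = -\tfrac14$ in \eqref{e:fpr1} is weak and one cannot afford to be lossy across $O(\log\log q)$ classes of subfield subgroups plus a few reducible-type subgroups. I would instead combine: (i) the transvection bound, where ${\rm fpr}(x,G/H) = (q_0^4-1)/(q^4-1)$ for a subfield subgroup is already tiny, and the $\wr S_2$-stabilizer contributes at most $O(q^{-2})$; (ii) for non-transvections, the sharper estimates of \cite{Bur1,GK} — in particular \cite[Proposition 3.16]{GK} for the reducible subgroups and the refined bound ${\rm fpr}(x,G/H) < |x^G|^{-1/2+2/n}$ of \cite{Bur1} for the imprimitive subgroup — using the smallest class size $|x^G| \geq (q^4-1)/(q^2-1) = q^2+1$ attained by $x=(-I_2,I_2)$ (and its ${\rm O}_4$-analogue). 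Summing, the dominant contribution is the $\wr S_2$-stabilizer at roughly $q^{-2}$ or so, everything else is $O(q^{-2}\log\log q)$ or smaller, and the total is comfortably below $1$ once $q$ exceeds the computational threshold. The bookkeeping — matching Maslova's precise list against the congruence of $q \bmod 8$ and keeping the constants small enough — is the delicate part, but no genuinely new difficulty arises.
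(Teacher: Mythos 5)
Your overall architecture (a {\sc Magma} check for small $q$, then an enumeration of the odd-index maximal overgroups of $P$ and a fixed point ratio estimate summed over $\mathcal{M}$) is the same as the paper's, but two of your specific steps are wrong as stated. First, you dismiss the subgroup $2^4.A_5$ on the grounds that it ``only occurs for a handful of small $p$'': in fact it is a maximal odd-index subgroup of ${\rm PSp}_4(p)$ for \emph{every} prime $p \equiv \pm 3 \pmod 8$, so it persists for infinitely many $q$ beyond any computational threshold and must be carried in the sum (its contribution is tiny, but it cannot be ignored; note also that Lemma \ref{l:sp1} gives $n(H,P)=1$ here, so no factor of $3$ is needed anywhere). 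Second, your fixed point ratio input breaks down precisely because $n=4$: the ``refined'' exponent $-\tfrac12+\tfrac2n$ you propose for the imprimitive subgroup ${\rm Sp}_2(q)\wr S_2$ equals $0$ when $n=4$, so that bound says nothing, and the paper only ever invokes the bounds of \cite{Bur1} for $n\geqs 6$ (Lemmas \ref{l:fpr1}, \ref{l:fpr2}, \ref{l:fpr4}), since the low-dimensional cases are exactly where those estimates have exceptions. Your claimed minimal class size is also off: for $x=(-I_2,I_2)$ one has $|x^G| = q^2(q^4-1)/(q^2-1) = q^2(q^2+1)$, not $(q^4-1)/(q^2-1)$, and your assertion that the $\wr S_2$-stabilizer contributes $O(q^{-2})$ fails for non-transvections (for $x=(-I_2,I_2)$ the swap-type involutions in $H$ already give ${\rm fpr}(x,G/H)$ of order $q^{-1}$). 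So as written the estimate for general elements at the imprimitive subgroup has no valid justification.

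The repair is exactly what the paper does, and it is simpler than your plan: for $q\geqs 5$ apply the main theorem of \cite{LS91}, which gives the uniform bound ${\rm fpr}(x,G/H) \leqs 4/3q$ for every $H \in \mathcal{M}$ and every nontrivial $x$, and combine it with Lemma \ref{l:sp1} and \cite[Table 8.12]{BHR} to see that $|\mathcal{M}| \leqs 2+\log\log q$ (the subfield subgroups, the type ${\rm Sp}_2(q)\wr S_2$ subgroup and, when $q=p\equiv \pm 3 \pmod 8$, the subgroup $2^4.A_5$; there are no nondegenerate subspace stabilizers since nondegenerate subspaces are even-dimensional, and no ${\rm O}_4^{\pm}(q)$-type subgroups arise in odd characteristic). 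This yields $\Sigma(x) \leqs (2+\log\log q)\cdot 4/3q < 1$ for all $q \geqs 5$, leaving only $q=3$ to {\sc Magma}, where one computes $\Sigma(x) \leqs 7/15$.
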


\begin{proof}
If $q=3$ then using {\sc Magma} \cite{Magma} we compute $\Sigma(x) \leqs 7/15$, so for the remainder we may assume $q \geqs 5$. By the main theorem of \cite{LS91} we have ${\rm fpr}(x,G/H) \leqs 4/3q$ for all $H \in \mathcal{M}$ and by applying Lemma \ref{l:sp1} and inspecting \cite[Table 8.12]{BHR} we observe that there are at most $2+\log\log q$ subgroups in $\mathcal{M}$. Therefore,
\[
\Sigma(x) \leqs (2+\log\log q) \cdot \frac{4}{3q} <1  
\]
and the result follows.
\end{proof}

\subsection{Linear and unitary groups}\label{s:lu}

In this final section we complete the proof of Theorem \ref{t:main} by handling the linear and unitary groups. The low dimensional groups $G = {\rm L}_n^{\e}(q)$ with $n \leqs 5$ require special attention and they are treated separately at the end of the section.

\begin{thm}\label{t:psun}
The conclusion to Theorem \ref{t:main2} holds if $G = {\rm U}_n(q)$ and $n \geqs 6$.
\end{thm}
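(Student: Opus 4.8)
The plan is to follow the same strategy as in the symplectic case treated in Section~\ref{s:symp}. Write $V = \mathbb{F}_{q^2}^{n}$ for the natural module for $G$, equipped with its nondegenerate Hermitian form, and fix a Sylow $2$-subgroup $P$ of $G$. The first step is to determine the members of $\mathcal{M}$. Since each $H \in \mathcal{M}$ has odd index in $G$, the possibilities are listed in \cite{LS85} and pinned down precisely by Maslova \cite{Mas}; as for the orthogonal and symplectic groups, they comprise a family of \emph{subspace subgroups} --- the stabilisers $G_U$ in $G$ of a nondegenerate $m$-subspace $U$ with $m < n/2$ --- together with a short list of non-subspace subgroups, namely subfield subgroups of type ${\rm GU}_n(q_0)$ with $q = q_0^{e}$ and $e$ an odd prime, imprimitive subgroups of type ${\rm GU}_a(q) \wr S_b$ with $ab = n$, field-extension subgroups of type ${\rm GU}_{n/b}(q^{b}).b$, and finitely many sporadic exceptions occurring for small $n$. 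I would write $\mathcal{M} = \mathcal{M}_1 \cup \mathcal{M}_2$ with $\mathcal{M}_2$ the set of subspace subgroups and $\mathcal{M}_1$ the remainder, and correspondingly $\Sigma(x) = \Sigma_1(x) + \Sigma_2(x)$. A handful of cases in which $n$ and $q$ are both small (such as $n = 6$ with $q \in \{3,5\}$, and a few others) can be handled directly in {\sc Magma}, as in Remark~\ref{r:matrix}, and then excluded from the general argument.

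The heart of the proof is to control $n(H,P) = |N_G(P):N_H(P)|$ (Lemma~\ref{l:count}) for each $H \in \mathcal{M}$. By Theorem~\ref{t:kon} we have $|N_G(P):P| = ((q+1)_{2'})^{t(n)-1}/(n,q+1)_{2'}$, and --- in contrast with the symplectic groups, where this index is bounded independently of $q$ --- here it grows with $q$, so the bound on $n(H,P)$ genuinely matters. The key observation is that, relative to a suitable Sylow $2$-subgroup, $V$ decomposes as an orthogonal direct sum $V = V_1 \perp \cdots \perp V_t$ of nondegenerate subspaces on which $P$ acts irreducibly, with $\dim V_i = 2^{a_i}$, where $n = 2^{a_1} + \cdots + 2^{a_t}$ is the binary expansion of $n$ and $t = t(n)$. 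Since the $V_i$ have distinct dimensions, Maschke's theorem shows that every $P$-invariant subspace of $V$ is a sum of some of the $V_i$, so $P$ stabilises a unique nondegenerate subspace of each admissible dimension; it follows at once that $n(H,P) = 1$ for every $H \in \mathcal{M}_2$. For the imprimitive and field-extension subgroups in $\mathcal{M}_1$ I would prove $n(H,P) = 1$ by an argument parallel to the proof of Lemma~\ref{l:sp1} --- producing an odd-order element of $N_G(P_0)$, with $P_0 = P \cap {\rm GU}_a(q)^{b}$, that together with $P_0$ normalises $H$ --- while for subfield subgroups $n(H,P) = 1$ follows from Theorem~\ref{t:kon} exactly as in the symplectic case, and the sporadic small-$n$ subgroups are dealt with one at a time.

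With $n(H,P) = 1$ established, the fixed point ratio estimates are routine. For $H \in \mathcal{M}_1$ and $x \in G$ of prime order which is not a transvection, the main theorem of \cite{Bur1} gives ${\rm fpr}(x,G/H) < |x^G|^{-\frac{1}{2}+\frac{1}{n}}$ (with $\frac{1}{n}$ replaced by $\frac{2}{n}$ when $H$ is imprimitive with two blocks), and from \cite[Section~3.5]{BG_book} one reads off a lower bound on $|x^G|$ --- the minimal class being a transvection or an involution of type $(-I_2,I_{n-2})$ --- which, combined with the counts of $G$-classes of each type of subgroup available from \cite{BHR,KL}, yields $\Sigma_1(x) < \delta$ for a small constant $\delta$. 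Transvections must be treated separately, since \eqref{e:fpr1} is not available for them, but for these $|x^G \cap H|$ can be computed by hand (it is $0$ when $H$ is of type ${\rm GU}_1(q) \wr S_n$, and small otherwise), so their contribution is also at most $\delta$. For the subspace subgroups $H = G_U$ in $\mathcal{M}_2$ I would use the bounds for unitary subspace actions in \cite{GK}, which are roughly of the shape ${\rm fpr}(x,G/G_U) \ll q^{-m}$ for $\dim U = m$, sum the resulting series over the admissible dimensions $m$ to obtain $\Sigma_2(x) < 1 - \delta$, and conclude that $\Sigma(x) = \Sigma_1(x) + \Sigma_2(x) < 1$ for all nontrivial $x \in G$. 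The main obstacle is the combination of the classification input needed to describe $\mathcal{M}$ precisely (including the sporadic cases and the number of $G$-classes of each type) with the verification that $n(H,P) = 1$ for the imprimitive and field-extension subgroups; once these are in place, no essentially new difficulty arises beyond what was handled for the orthogonal and symplectic groups.
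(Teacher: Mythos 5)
Your reduction to the estimate $\Sigma(x)=\Sigma_1(x)+\Sigma_2(x)<1$, the identification of $\mathcal{M}_2$ with stabilizers of nondegenerate $m$-spaces ($m<n/2$) and the observation that $n(H,P)=1$ for these (since the $P$-irreducible summands of $V$ have distinct $2$-power dimensions) all agree with the paper. But there is a genuine gap at the step you yourself identify as the heart of the matter: the claim that $n(H,P)=1$ for the subfield subgroups ``follows from Theorem \ref{t:kon} exactly as in the symplectic case'' is false. The symplectic argument in Lemma \ref{l:sp1} works because $|N_G(P):P|=3^{t(n)}$ there is independent of the field, so for $H$ of type ${\rm Sp}_n(q_0)$ one gets $|N_H(P):P|=|N_G(P):P|$ and hence $N_H(P)=N_G(P)$. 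For $G={\rm U}_n(q)$ with $n$ not a $2$-power, $|N_G(P):P|=((q+1)_{2'})^{t(n)-1}/(n,q+1)_{2'}$ grows with $q$: the odd part of $N_G(P)$ is realized by elements acting as scalars of order dividing $(q+1)_{2'}$ on the $P$-irreducible summands, and such elements do not in general lie in a subgroup of type ${\rm GU}_n(q_0)$ with $q=q_0^e$ (the corresponding quantity for $H$ only involves $(q_0+1)_{2'}$). So $N_G(P)\not\leqs H$ and $n(H,P)=|N_G(P):N_H(P)|>1$ in general for subfield subgroups; since your bound $\Sigma_1(x)<\delta$ is predicated on $n(H,P)=1$ throughout $\mathcal{M}_1$, the key estimate is not justified as written.

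The gap is reparable, and the paper's route shows how: for every $H\in\mathcal{M}_1$ it simply uses the crude bound $n(H,P)\leqs |N_G(P):P|\leqs \left(\frac{1}{2}(q+1)\right)^{\log n-1}$ coming from Lemma \ref{l:count} and Theorem \ref{t:kon}, and compensates with the strong non-subspace bound from \eqref{e:fpr1}: since $|x^G|>(q^{n-1}-1)(q^n-1)/(q+1)$ for all nontrivial $x$, one gets ${\rm fpr}(x,G/H)<q^{7/2-n}$, which absorbs the polynomial-in-$q$ factor and the class counts for all $n\geqs 6$ (with $(n,q)=(6,3)$ done by computer). In particular no separate treatment of transvections is needed in the unitary case, unlike Lemma \ref{l:fpr4}. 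Two smaller points: the odd-index maximal subgroups here are only the subfield subgroups, the stabilizers of orthogonal decompositions into nondegenerate $2^k$-spaces, and the nondegenerate subspace stabilizers; field-extension subgroups of type ${\rm GU}_{n/b}(q^b).b$ do not have odd index, so listing them is harmless for an upper bound but inaccurate. Your claim that $n(H,P)=1$ for the imprimitive subgroups is plausibly correct (each block of the decomposition lies in a single $P$-irreducible summand, on which the odd part of $N_G(P)$ acts by scalars), but the paper does not need it, and in any case it cannot rescue the subfield case.
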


\begin{proof}
Let $H \in \mathcal{M}$. By applying \cite{LS85,Mas} we deduce that one of the following holds:
\begin{itemize}\addtolength{\itemsep}{0.2\baselineskip}
\item[{\rm (a)}] $H$ is a subfield subgroup of type ${\rm GU}_n(q_0)$, where $q=q_0^e$ and $e \geqs 3$ is a prime.
\item[{\rm (b)}] $H$ is the stabilizer of an orthogonal decomposition $V = V_1 \perp \cdots \perp V_b$, where the $V_i$ are nondegenerate $a$-spaces with $a = 2^k$, $k \geqs 0$.  
\item[{\rm (c)}] $H$ is the stabilizer of a nondegenerate $m$-space with $m<n/2$.
\end{itemize}
Let $x \in G$ be nontrivial and write $\mathcal{M} = \mathcal{M}_1 \cup \mathcal{M}_2$ and $\Sigma(x) = \Sigma_1(x) + \Sigma_2(x)$, where $\mathcal{M}_1$ comprises the subgroups in (a) and (b). If $(n,q) = (6,3)$ then a {\sc Magma} computation yields $\Sigma(x) \leqs 5/351$, so for the remainder we may assume $(n,q) \ne (6,3)$. We claim that $\Sigma_i(x) <1/2$ for $i=1,2$ and thus $\Sigma(x)<1$ as required.

First assume $H \in \mathcal{M}_1$ and note that \eqref{e:fpr1} holds (see \cite{Bur1}). Then 
\[
|x^G| > \frac{(q^{n-1}-1)(q^n-1)}{q+1}
\]
and by applying \eqref{e:fpr1} we deduce that ${\rm fpr}(x,G/H)<q^{7/2-n}$. In addition, Theorem \ref{t:kon} yields 
\[
n(H,P) \leqs |G:N_G(P)| = \frac{((q+1)_{2'})^{t-1}}{(n,q+1)_{2'}} \leqs \left(\frac{1}{2}(q+1)\right)^{\log n - 1},
\] 
where $t$ is the number of nonzero digits in the binary expansion of $n$. Note that $t<\log n$.

In (a), there are at most $\log\log q$ choices for $e$ and there are $(n,(q+1)/(q_0+1))$ distinct $G$-classes for each $e$ (see \cite[Proposition 4.5.3]{KL}). Similarly, there are at most $\log n$ choices for $a$ in (b) and by \cite[Proposition 4.2.9]{KL} we see that there is a unique conjugacy class of subgroups for each given $a$. Since $(n,(q+1)/(q_0+1)) \leqs (n,q+1)_{2'}$, it follows that
\[
\Sigma_1(x) < (\delta\log\log q + \log n)\left(\frac{1}{2}(q+1)\right)^{\log n - 1}q^{7/2-n},
\]
where $\delta=1$ if $q \geqs 27$, otherwise $\delta=0$. One can check that this yields $\Sigma_1(x)<1/2$.

Now assume $H \in \mathcal{M}_2$ is the stabilizer of a nondegenerate $m$-space, where $m<n/2$. Note that $n(H,P) = 1$ since $P$ fixes at most one subspace of $V$ of any given dimension. We divide the analysis into two cases according to the parity of $n$.

First assume $n \geqs 6$ is even. Here \cite[Proposition 3.16]{GK} gives 
\[
{\rm fpr}(x,G/H)<(2q^3+1)q^{1-n}+q^{m+d-n}+q^{-2m},
\]
where $d=1$ if $m=1$, otherwise $d=0$. Therefore,
\[
\Sigma_2(x)< (n/2-1)(2q^3+1)q^{1-n}+q^{2-n}+q^{-2}a+q^{-n/2-1}b,
\]
where 
\[
a = \sum_{i=0}^{\infty}q^{-2i} = \frac{q^2}{q^2-1},\;\; b = \sum_{i=0}^{\infty}q^{-i} = \frac{q}{q-1},
\]
and it is straightforward to check that $\Sigma_2(x) < 1/2$ as required.

Finally, suppose $n \geqs 7$ is odd. Here  
\[
{\rm fpr}(x,G/H)<(2q^3+1)q^{-n}+q^{m+d-n}+q^{-2m}
\]
by \cite[Proposition 3.16]{GK}, where $d=1$ if $m$ is even, otherwise $d=0$. Therefore,
\[
\Sigma_2(x)< \frac{1}{2}(n-1)(2q^3+1)q^{-n}+q^{1-n}+(q^{-2}+2q^{\frac{1}{2}(1-n)})a,
\]
where $a$ is defined as above. This yields $\Sigma_2(x)<1/2$ and the result follows.
\end{proof}

\begin{thm}\label{t:psln2}
The conclusion to Theorem \ref{t:main2} holds if $G = {\rm L}_n(q)$ and $n \geqs 6$.
\end{thm}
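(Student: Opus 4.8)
The first step is to dispose of the genuine exception: if $G = {\rm L}_n(3)$ with $n \geq 7$ odd, then conclusion (ii) of Theorem \ref{t:main2} holds and there is nothing to prove. So, by Remark \ref{r:main}(b), I may assume $q = p^f$ is odd and, moreover, that $q \geq 5$ or $n$ is even. As in the earlier subsections, I fix a Sylow $2$-subgroup $P$ of $G$, an element $x \in G$ of prime order, and define $\mathcal{M}$, $\Sigma(x)$ and $n(H,P)$ as before; the goal is to prove $\Sigma(x) < 1$. A handful of low-dimensional groups over $\mathbb{F}_3$ and $\mathbb{F}_5$ (for instance $(n,q) = (6,3)$) will be handled directly with {\sc Magma}, as in Remark \ref{r:matrix}.

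Next I would record the structure of $\mathcal{M}$ from \cite{LS85,Mas}: every $H \in \mathcal{M}$ is (a) a subfield subgroup of type ${\rm SL}_n(q_0)$ with $q = q_0^e$, $e$ an odd prime; (b) an imprimitive subgroup of type ${\rm GL}_a(q) \wr S_{n/a}$ with $a = 2^k$, $k \geq 0$; (c) a parabolic subgroup $P_m$, the stabilizer of an $m$-dimensional subspace of the natural module, for some $1 \leq m \leq n-1$ with $\binom{n}{m}$ odd; or (d) an orthogonal or symplectic subgroup, occurring only for certain residues of $q$ modulo $8$ and parities of $n$. The number of $G$-classes of each type can be read off from \cite{KL,BHR}. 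I then put $\mathcal{M} = \mathcal{M}_1 \cup \mathcal{M}_2$ with $\mathcal{M}_2$ the set of parabolic subgroups, write $\Sigma(x) = \Sigma_1(x) + \Sigma_2(x)$, and note the counting input: Lemma \ref{l:count} and Theorem \ref{t:kon} give $n(H,P) \leq |N_G(P):P| = ((q-1)_{2'})^{t(n)-1}/(n,q-1)_{2'}$, which equals $1$ whenever $q-1$ is a $2$-power (in particular when $q \in \{3,5\}$); and, arguing as in Lemma \ref{l:sp1}, $V$ is a direct sum of irreducible $P$-submodules of pairwise distinct $2$-power dimensions, so $P$ stabilizes a unique subspace of each dimension and hence $n(P_m,P) = 1$ for every parabolic $P_m$.

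For $\Sigma_1(x)$ I would argue as in Proposition \ref{p:31} and Theorem \ref{t:psun}. Since no $H \in \mathcal{M}_1$ is a subspace subgroup, the main theorem of \cite{Bur1} gives ${\rm fpr}(x,G/H) \leq |x^G|^{-1/2 + 1/n}$, and since the smallest noncentral conjugacy class of $G$ has size at least $(q^{n-1}-1)(q^n-1)/(q-1)$ (attained by a transvection), this yields ${\rm fpr}(x,G/H) < q^{7/2 - n}$; a direct computation with the number of transvections in $H$ confirms the same bound when $x$ is a transvection and $H$ is of type (a), (b) or (d). Multiplying by the number of $G$-classes of each type (at most $\log\log q$ in (a), at most $\log n$ in (b), $O(1)$ in (d)) and by $n(H,P) \leq ((q-1)_{2'})^{t(n)-1}$ --- which forces $q \geq 7$ whenever it exceeds $1$, so that the factor $q^{7/2-n}$ easily absorbs it --- gives $\Sigma_1(x) < 1/2$, and in fact $\Sigma_1(x)$ is negligible once $n$ is not small.

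The contribution $\Sigma_2(x)$ from the parabolic subgroups is where the argument is genuinely tight, and I expect this to be the main point. Here \cite[Proposition 3.1]{GK} gives ${\rm fpr}(x,G/P_m) < 2q^{-m}$ for $m < n/2$, and by passing to the dual module ${\rm fpr}(x,G/P_m) < 2q^{-(n-m)}$ for $m > n/2$; since $n(P_m,P) = 1$ and there is a single $G$-class of each $P_m$, we get $\Sigma_2(x) < 2\sum_m q^{-\min(m,n-m)}$, summed over $1 \leq m \leq n-1$ with $\binom{n}{m}$ odd. When $n$ is even, only even values of $m$ occur, so $\min(m,n-m) \geq 2$ and $\Sigma_2(x) < 4\sum_{i\geq 1}q^{-2i} = 4/(q^2-1) \leq 1/2$. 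When $n$ is odd, $m = 1$ and $m = n-1$ occur, contributing $2q^{-1}$ each, and the estimate becomes $\Sigma_2(x) < 4\sum_{i\geq 1}q^{-i} = 4/(q-1)$, which is less than $1$ precisely for $q \geq 5$ --- this is exactly why $q = 3$ with $n$ odd is excluded. The delicate feature is that for $q = 5$ and $n$ odd (notably $n$ a Mersenne prime, where every $P_m$ appears) this bound approaches $1$, leaving essentially no room for $\Sigma_1(x)$; the proof then relies on the facts that $n(H,P) = 1$ for $q = 5$ and that $\mathcal{M}_1$ is small (indeed often empty, e.g. when $n$ is prime), so that $\Sigma_1(x)$ is tiny. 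Combining the two estimates gives $\Sigma(x) = \Sigma_1(x) + \Sigma_2(x) < 1$ in all the remaining cases, after the finitely many borderline pairs $(n,q)$ over $\mathbb{F}_3$ and $\mathbb{F}_5$ have been checked with {\sc Magma}.
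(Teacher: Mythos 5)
Your overall route is the same as the paper's: dispose of the $q=3$, $n$ odd exception, compute $(6,3)$ with {\sc Magma}, split $\mathcal{M}$ into the parabolic subgroups and the rest, bound the non-subspace contribution via \eqref{e:fpr1} together with class counting and Theorem \ref{t:kon}, and bound the parabolic contribution via \cite[Proposition 3.1]{GK}, using the $P$-module decomposition of $V$ to get $n(P_m,P)=1$. However, there is a genuine gap at the critical case $n$ odd, $q=5$. Applying ${\rm fpr}(x,G/P_m)<2q^{-\min(m,n-m)}$ to $m=1$ and $m=n-1$ gives your estimate $\Sigma_2(x)<4\sum_{i\geqs 1}q^{-i}=4/(q-1)$, and at $q=5$ this equals $1$; it is not ``less than $1$ precisely for $q\geqs 5$'' as you assert, and it leaves no room at all for $\Sigma_1(x)$. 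Your fallback, that $\mathcal{M}_1$ is ``often empty, e.g.\ when $n$ is prime'' so that $\Sigma_1(x)$ is tiny, does not repair this: a tiny positive quantity added to a bound of $1$ is not less than $1$, and in any case $\mathcal{M}_1$ is not empty here, since for $q\equiv 1\pmod 4$ the imprimitive subgroup of type ${\rm GL}_1(q)\wr S_n$ (your case $a=1$, which imposes no divisibility condition on $n$) has odd index and so lies in $\mathcal{M}_1$ for $q=5$ and every odd $n\geqs 7$.

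There are two ways to close this. The paper's way is to treat $m\in\{1,n-1\}$ with the sharper estimate from \cite[Proposition 3.1]{GK}, roughly ${\rm fpr}(x,G/P_1)<q^{-1}+q^{1-n}$ rather than $2q^{-1}$, which gives $\Sigma_2(x)<\frac{4}{q(q-1)}+2\bigl(q^{-1}+q^{1-n}\bigr)<2/3$ for all $q\geqs 5$, comfortably compatible with $\Sigma_1(x)<1/3$. Alternatively, you could keep your cruder fixed point ratio bound but retain the finite geometric sum, $\Sigma_2(x)<4\sum_{i=1}^{(n-1)/2}q^{-i}=1-q^{-(n-1)/2}$, and then verify explicitly that $\Sigma_1(x)<q^{7/2-n}<q^{-(n-1)/2}$ for $n\geqs 7$ --- but this quantitative comparison is exactly the step your write-up omits, so as written the case $q=5$, $n$ odd is not proved. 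Two smaller points: orthogonal and symplectic subgroups do not occur in $\mathcal{M}$ for ${\rm L}_n(q)$ with $n\geqs 6$ (their index is even, since $q^j-1$ is even for odd $j\geqs 3$), so your case (d) is spurious, though harmless; and the claim that $\mathcal{M}_1$ can be empty when $n$ is prime is false for the same reason the $a=1$ type exists --- the list of odd-index maximal subgroups should be taken directly from \cite{LS85,Mas}, as in the paper.
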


\begin{proof}
We need to show that $\Sigma(x) < 1$ unless $n$ is odd and $q=3$. So for the remainder of the proof we will assume $n$ is even if $q=3$. 

Let $H \in \mathcal{M}$. By inspecting \cite{LS85,Mas} we see that one of the following holds:
\begin{itemize}\addtolength{\itemsep}{0.2\baselineskip}
\item[{\rm (a)}] $H$ is a subfield subgroup of type ${\rm GL}_n(q_0)$, where $q=q_0^e$ and $e \geqs 3$ is a prime.
\item[{\rm (b)}] $H$ is the stabilizer of a decomposition $V = V_1 \oplus \cdots \oplus V_b$, where the $V_i$ are $a$-spaces with $a = 2^k$, $k \geqs 0$.
\item[{\rm (c)}] $H$ is the stabilizer of an $m$-space with $1 \leqs m<n$.
\end{itemize}
As before, write $\mathcal{M} = \mathcal{M}_1 \cup \mathcal{M}_2$ and $\Sigma(x) = \Sigma_1(x) + \Sigma_2(x)$, where $\mathcal{M}_1$ comprises the subgroups in (a) and (b). If $(n,q) = (6,3)$ then a direct computation gives $\Sigma(x) \leqs 235/1001$, so for the remainder we may assume $(n,q) \ne (6,3)$. 

By repeating the argument in the proof of Theorem \ref{t:psun}, it is straightforward to show that 
\[
\Sigma_1(x) <(\delta\log\log q + \log n)\left(\frac{1}{2}(q-1)\right)^{\log n - 1}q^{3-n} < \frac{1}{3},
\]
where $\delta=1$ if $q \geqs 27$, otherwise $\delta=0$. Therefore, to complete the proof we just need to verify the bound $\Sigma_2(x) \leqs 2/3$. 

Write $n = 2^{a_1} + \cdots + 2^{a_t}$, where $a_1 > a_2 > \cdots > a_t \geqs 0$. Then $P$ fixes an $m$-space if and only if $m = b_12^{a_1} + \cdots + b_t2^{a_t}$ for some $b_i \in \{0,1\}$. Moreover, if $m$ is of this form then $P$ fixes a unique $m$-space and thus $n(H,P) = 1$ for all $H \in \mathcal{M}_2$. Suppose $n \geqs 6$ is even, so $m \ne 1, n-1$. If $m \leqs n/2$ then  \cite[Proposition 3.1]{GK} gives ${\rm fpr}(x,G/H)<2q^{-m}$ and thus
\[
\Sigma_2(x) < 4q^{-2}\sum_{i=1}^{\infty}q^{-i} = \frac{4}{q(q-1)} \leqs \frac{2}{3}.
\]
Similarly, if $n \geqs 7$ is odd then \cite[Proposition 3.1]{GK} yields
\[
\Sigma_2(x) < \frac{4}{q(q-1)} + 2\left(\frac{1}{q}+\frac{1}{q^{n-1}}\right)
\]
and we deduce that $\Sigma_2(x) < 2/3$ for all $q \geqs 5$. The result follows.
\end{proof}

To complete the proof of Theorem \ref{t:main} for the groups $G = {\rm L}_n^{\e}(q)$ with $n \geqs 6$ we may assume $\e=+$, $n$ is odd and $q=3$, in which case $G = {\rm SL}_n(3)$. The following lemma handles the case where $x \in G$ is semisimple and the proof relies on a theorem of Gow \cite{Gow}, which is proved using character theory.

\begin{lem}\label{l:psl1}
Let $G= {\rm L}_n(3)$, where $n \geqs 7$ is odd, and let $P$ be a Sylow $2$-subgroup of $G$. If $x \in G$ is non-central and semisimple, then $G = \la P, x^g \ra$ for some $g \in G$.
\end{lem}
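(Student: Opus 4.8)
The plan is to use Gow's theorem on products of conjugacy classes of semisimple elements in groups of Lie type, combined with the classification of odd-index maximal subgroups of $G={\rm L}_n(3)$ from \cite{LS85, Mas}. Since $n\geqs 7$ is odd and $q=3$, the relevant odd-index maximal subgroups are essentially the parabolic subgroups $P_m$ (stabilizers of $m$-dimensional subspaces) for $1\leqs m<n$, together with possible subfield and imprimitive subgroups; but for $q=3$ there are no subfield subgroups, and the key point will be that every $H\in\mathcal{M}$ is contained in (or controlled by) a proper subgroup that omits some regular semisimple class. The main idea is that a non-central semisimple element $x$, being conjugate into a maximal torus, has the property that $x^G$ meets a "large" part of $G$; Gow's theorem guarantees that if $s,t$ are regular semisimple then $s^G t^G$ covers $G$ (or at least all of $G\setminus Z(G)$), which can be leveraged to show $x$ has a conjugate in no member of $\mathcal{M}$.

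First I would reduce to the case that $x$ is regular semisimple: since ${\rm fpr}(x,G/H)\leqs{\rm fpr}(x^m,G/H)$ and powers of a non-central semisimple element remain semisimple, it suffices to treat those $x$ whose powers we cannot further simplify; more directly, one picks a specific convenient regular semisimple class (e.g. one generating an anisotropic torus or a torus of order close to $(3^n-1)/2$) and shows every other non-central semisimple $x$ either lies in such a class or can be reached from it. Second, I would show that no $H\in\mathcal{M}$ contains a representative of this chosen regular semisimple class $C$: for the parabolic subgroups $P_m$ this follows because an element acting irreducibly (or with an irreducible constituent of dimension $>n/2$) on the natural module fixes no proper nonzero subspace; for imprimitive subgroups of type ${\rm GL}_a(3)\wr S_{n/a}$ a suitable choice of eigenvalue structure rules out membership. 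Third — and this is where Gow enters — rather than the fixed-point-ratio sum $\Sigma(x)$, I would argue directly: because $P$ contains a regular unipotent-free... more precisely, because some element of $P$ together with an appropriate conjugate of $x$ lies in no common maximal subgroup, using that $\la P, x^g\ra$ is forced to be irreducible and primitive and hence (by the list in $\mathcal{M}$ being exhaustive) all of $G$. The cleanest route: show $\la P, C\ra$-type generation by exhibiting $y\in C$ with $\la P,y\ra$ not inside any $H\in\mathcal{M}$, which reduces to a counting statement handled by Gow's class-product result.

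The hard part will be the second and third steps together: proving that the chosen semisimple class avoids \emph{all} the odd-index maximal subgroups simultaneously, and then converting "$x^G$ is not covered by $\bigcup_{H\in\mathcal{M}}(x^G\cap H)$" into an actual generation statement without the fixed-point-ratio bound (which we know fails to give $\Sigma(x)<1$ here). I expect the resolution to use the full strength of Gow's theorem: for any non-central semisimple $x$, the set $\{g\in G : x^g\notin H \text{ for all } H\in\mathcal{M}\}$ is nonempty because the "bad" set is a union of $G$-conjugates of proper subgroups, and Gow's result on the density of regular semisimple classes (every element of $G$ is a product of two regular semisimple elements, or the relevant class-multiplication coefficients are positive) forces the complement to be nonempty. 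A subtlety is the element $x=(-I_{n-1},I_1)$-type involutions and small-order semisimple elements where the torus is small; these may need separate treatment, possibly by directly checking that the centralizer structure still permits escaping every $H\in\mathcal{M}$, or by noting $n\geqs 7$ gives enough room. I would carry out the parabolic case first (it is the essential one and where irreducibility is cleanest), then the imprimitive case, then assemble the Gow argument, flagging any residual small cases for a {\sc Magma} check as is done elsewhere in the paper.
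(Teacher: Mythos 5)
There is a genuine gap. Your argument ultimately rests on the claim that some conjugate of $x$ lies in no member of $\mathcal{M}$, and you propose that Gow's theorem, via a ``density'' of regular semisimple classes, ``forces the complement to be nonempty''. But Gow's theorem is a statement about products of conjugacy classes; it says nothing about the class $x^G$ avoiding the finitely many parabolic subgroups that contain $P$, and for precisely the elements you flag as a subtlety (for instance $x=(-I_{n-1},I_1)$) this avoidance statement is the whole difficulty: the union bound $\Sigma(x)$ is very close to $1$ (for $n=7$ it equals $1086/1093$), so no soft covering or density argument is available, and your proposal supplies no mechanism to replace it. The preliminary reduction ``to regular semisimple $x$'' is also not available: the inequality ${\rm fpr}(x,G/H)\leqs{\rm fpr}(x^m,G/H)$ reduces one to elements of prime order, and powering makes elements \emph{less} regular (an involution such as $(-I_{n-1},I_1)$ has no regular semisimple power), so there is no way to ``reach'' such $x$ from a chosen regular class without an explicit argument.

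The paper's use of Gow is different and avoids the avoidance claim altogether. Since every $H\in\mathcal{M}$ is a parabolic subgroup (by \cite{Mas}), any element $y$ of order $(3^n-1)/2$ acts irreducibly on the natural module and hence satisfies $G=\la P,y^g\ra$ for \emph{every} $g\in G$ — this is the only place the structure of $\mathcal{M}$ is needed, and it is your step two, which is fine. The decisive additional input is that $P$ itself contains a regular semisimple element $z$ (\cite[Lemma 7.5]{GLOST}); then the main theorems of \cite{Gow,GT} give $x\in y^Gz^G$, say $x=y^az^b$, whence $x^{b^{-1}}z^{-1}=y^{ab^{-1}}$ and so $\la P,x^{b^{-1}}\ra\supseteq\la P,y^{ab^{-1}}\ra=G$. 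Note that the conjugate $x^{b^{-1}}$ may perfectly well lie in members of $\mathcal{M}$; what matters is that, after multiplying by an element of $P$, it becomes a conjugate of the irreducible element $y$. Your half-sentence ``because $P$ contains a regular \dots'' is the germ of this, but you never assert that $P$ contains a regular semisimple element, never perform the product manipulation, and instead fall back on the unproved (and unprovable by these means) claim that $x^G$ escapes $\bigcup_{H\in\mathcal{M}}H$; without that missing step the proof does not close.
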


\begin{proof}
Let $y \in G$ be an element of order $(3^n-1)/2$ and note that each maximal overgroup of $P$ in $G$ is a parabolic subgroup (see \cite{Mas}). Since $y$ acts irreducibly on the natural module, it follows that $G = \la P, y^g \ra$ for all $g \in G$. By \cite[Lemma 7.5]{GLOST}, $P$ contains a regular semisisimple element $z$ and by combining the main theorems of \cite{Gow,GT} we deduce that $x \in y^Gz^G$, say $x = y^az^b$. Then $y^{ab^{-1}} = x^{b^{-1}}z^{-1}$ and the result now follows since $G = \la P, x^{b^{-1}}z^{-1}\ra = \la P,x^{b^{-1}}\ra$.
\end{proof}

\begin{rem}
The method used in the proof of Lemma \ref{l:psl1} can be applied more generally. For example, suppose $G = {\rm SL}_n(q)$, where $q$ is odd and $n \geqs 3$. Let $P$ be a Sylow $2$-subgroup of $G$ and let $y \in G$ be an element of order $(q^n-1)/(q-1)$. By \cite{Ber}, every maximal subgroup of $G$ containing $y$ is a field extension subgroup of type ${\rm GL}_{n/k}(q^k)$ for some prime divisor $k$ of $n$. Since none of these subgroups contain $P$, it follows that $G = \la P, y^g \ra$ for all $g \in G$ and so we can repeat the argument in the proof of Lemma \ref{l:psl1} to conclude that if $x \in G$ is non-central and semisimple, then $G = \la P, x^g \ra$ for some $g \in G$. But this still leaves us to deal with unipotent elements, which explains why we work with fixed point ratios in the proof of Theorem \ref{t:psln2}.
\end{rem}

Next we turn to unipotent elements. In the following lemma we use elementary linear algebra to handle transvections.

\begin{lem}\label{l:psl_trans}
Let $G= {\rm L}_n(3)$, where $n \geqs 7$ is odd, and let $P$ be a Sylow $2$-subgroup of $G$. If $x \in G$ is a transvection, then $G = \la P, x^g \ra$ for some $g \in G$.
\end{lem}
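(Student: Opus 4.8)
The plan is to reduce Lemma~\ref{l:psl_trans} to a statement about subspace geometry over $\mathbb{F}_3$. Here $G = {\rm L}_n(3) = {\rm SL}_n(3)$, since the centre of ${\rm SL}_n(3)$ is trivial when $n$ is odd. As in the proof of Lemma~\ref{l:psl1}, every maximal subgroup of $G$ containing $P$ is a parabolic subgroup, hence a maximal parabolic $P_m$, the stabiliser of an $m$-dimensional subspace of the natural module $V$ for some $1 \leqs m \leqs n-1$. Writing $n = 2^{a_1} + \cdots + 2^{a_t}$ with $a_1 > \cdots > a_t \geqs 0$, the analysis in the proof of Theorem~\ref{t:psln2} shows that $P$ fixes an $m$-space precisely when $m = \sum_{i \in I} 2^{a_i}$ for some $I \subseteq \{1, \dots, t\}$, and that the fixed $m$-space $W_m$ is then unique. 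Hence $\mathcal{M} = \{P_m : m \in S\}$, where $P_m$ denotes the stabiliser of $W_m$ and $S$ is the set of such sums $m$ with $1 \leqs m \leqs n-1$. Since all transvections of $G$ are conjugate (which holds for $n \geqs 3$: the group ${\rm SL}_n(3)$ is transitive on incident pairs $(L,H)$ with $L$ a line and $H$ a hyperplane of $V$, and for a fixed such pair a suitable diagonal matrix of determinant $1$ rescales a transvection by any element of $\mathbb{F}_3^*$), it suffices to exhibit a single transvection $t \in G$ lying in none of the $P_m$, $m \in S$; then $G = \la P, t \ra$ and $t = x^g$ for a suitable $g \in G$.

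The linear algebra input is the following elementary observation. If $t \in {\rm SL}(V)$ is a transvection, say $t(v) = v + \varphi(v)u$ for all $v \in V$ with $0 \neq u \in V$, $\varphi \in V^*$ and $\varphi(u) = 0$, then $t$ has image line $L = \la u \ra$ and fixed hyperplane $H = \ker\varphi$ (with $L \subseteq H$), and one checks directly that $t$ stabilises a subspace $W \leqs V$ if and only if $W \subseteq H$ or $L \subseteq W$. So $t$ lies in no member of $\mathcal{M}$ exactly when $W_m \not\subseteq H$ and $L \not\subseteq W_m$ for every $m \in S$, and I would construct such a $t$ in two steps. First choose the hyperplane $H$: the number of hyperplanes of $V$ containing a fixed $W_m$ is $(3^{n-m}-1)/2$, and a union bound over $m \in S$ (using that the integers $m$ are distinct and lie in $\{1,\dots,n-1\}$) gives a total strictly smaller than the number $(3^n-1)/2$ of all hyperplanes, so some $H$ avoids every $W_m$. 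Then, since $W_m \not\subseteq H$, the subspace $W_m \cap H$ has dimension $m-1$, so the number of lines of $H$ contained in some $W_m$ is at most $\sum_{m \in S}(3^{m-1}-1)/2$, which is again strictly smaller than the number $(3^{n-1}-1)/2$ of lines of $H$; hence there is a line $L \subseteq H$ with $L \not\subseteq W_m$ for all $m \in S$. Taking $0 \neq u \in L$ and $\varphi \in V^*$ with $\ker\varphi = H$ (so $\varphi(u) = 0$ automatically) yields the required transvection $t$.

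Essentially all of the content is in identifying $\mathcal{M}$, which is imported from \cite{Mas} and the proof of Theorem~\ref{t:psln2}; the remainder is the above observation on stabilised subspaces together with two easy counting estimates, which are comfortably satisfied because we work over $\mathbb{F}_3$ and the dimensions of the $P$-invariant subspaces are sparse. I do not anticipate a genuine obstacle here: the point is simply that a transvection whose centre and axis are in sufficiently general position relative to the few $P$-invariant subspaces of $V$ generates $G$ together with $P$.
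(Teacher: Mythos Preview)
Your argument is correct and follows the same high-level strategy as the paper: reduce to finding a single transvection that stabilises none of the proper $P$-invariant subspaces $W_m$, using the standard description of the subspaces stabilised by a transvection in terms of its centre $L$ and axis $H$. The implementation differs. The paper writes down an explicit transvection $y = I_n + N$, where $N$ is a rank-one matrix whose column space is the all-ones line and whose kernel is the hyperplane $\{v : \sum_k \alpha_k v_k = 0\}$ with every $\alpha_k$ nonzero; since each $W_m$ is a coordinate subspace with respect to the standard basis, this forces $W_m \not\subseteq \ker N$ and $\langle c_1 \rangle \not\subseteq W_m$ for every proper $m$, so $\langle P, y\rangle$ is irreducible. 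You instead establish the existence of a suitable pair $(L,H)$ by two easy union bounds over the (at most $n-1$) subspaces $W_m$. Your counting makes the ``general position'' intuition in your final paragraph precise and would work just as well over any $\mathbb{F}_q$; the paper's explicit matrix has the virtue of being completely concrete. Either route is entirely adequate here.
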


\begin{proof}
First observe that any two transvections in $G$ are conjugate and recall that each maximal overgroup of $P$ in $G$ acts reducibly on the natural module $V$. Therefore, it suffices to construct a specific transvection $y$ such that $\la P, y \ra$ acts irreducibly on $V$.  

The space $V$ decomposes as a direct sum $V = V_1 \oplus \cdots \oplus V_t$ of irreducible submodules for $P$, where $\dim V_i = 2^{a_i}$ and $a_1> a_2 > \cdots > a_t \geqs 0$. Without loss of generality, we may assume that each $V_i$ has a basis contained in the standard basis $\{e_1, \ldots, e_n\}$ for $V$, where we view $e_i$ as a column vector with $1$ in the $i$-th position and $0$ in all other positions.

Let $N$ be an $n \times n$ matrix with columns $c_1, \ldots, c_n$ such that every entry in $c_1$ is equal to $1$ and $c_i = \a_i c_1$ for $i \geqs 2$, where the nonzero scalars $\a_i \in \mathbb{F}_3$ are chosen so that $1 + \alpha_2 + \cdots + \alpha_n = 0$. Then $N$ is a rank one nilpotent matrix and thus $y = I_n + N$ is a transvection. Moreover, every non-diagonal entry of $y$ is nonzero and thus $\la P,y \ra$ acts irreducibly on $V$. As noted above, this implies that $G = \la P,y\ra$ and the proof is complete.
\end{proof}

\begin{thm}\label{t:psln3}
The conclusion to Theorem \ref{t:main} holds if $G = {\rm L}_n(3)$ and $n \geqs 7$ is odd.
\end{thm}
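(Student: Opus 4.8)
The plan is to assemble Theorem \ref{t:psln3} from the preceding lemmas by reducing to the two cases that have already been treated: semisimple elements and transvections. Recall the general set-up: we need to show that for every nontrivial $x \in G = {\rm L}_n(3)$ with $n \geqs 7$ odd, there exists $g \in G$ with $G = \la P, x^g \ra$. As observed in the introduction, ${\rm fpr}(x,G/H) \leqs {\rm fpr}(x^m,G/H)$ for all $m$, and more to the point, if $y$ is a power of $x$ and $G = \la P, y^g\ra$ then $y^g \in \la P, x^g\ra$ forces $G = \la P, x^g\ra$ as well; so it suffices to handle elements $x$ of prime order. An element of prime order in $G$ is either semisimple (order coprime to $3$) or unipotent (order $3$).

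First I would dispose of the semisimple case by quoting Lemma \ref{l:psl1} directly: any non-central semisimple $x$ has a conjugate generating $G$ together with $P$. So we may assume $x$ has order $3$, i.e. $x$ is a unipotent element with a single nontrivial Jordan block size — more precisely, since $x^3 = 1$, the Jordan form of $x$ on the natural module $V = \mathbb{F}_3^n$ consists of blocks of size at most $3$. If $x = (J_2, J_1^{n-2})$ then $x$ is a transvection and Lemma \ref{l:psl_trans} finishes the job. The remaining unipotent elements are those whose Jordan form contains at least one $J_3$ block, or at least two $J_2$ blocks; equivalently, $x$ is not a transvection. For these, the plan is to fall back on the fixed point ratio estimate: every maximal subgroup of $G$ containing $P$ is a parabolic subgroup $P_m$ stabilizing an $m$-dimensional subspace (this uses \cite{Mas}, as already recorded in the proof of Lemma \ref{l:psl1}), and $n(P_m,P) = 1$ since $P$ fixes at most one subspace of each dimension, so $\mathcal{M} = \{P_m : m = b_1 2^{a_1} + \cdots + b_t 2^{a_t},\ b_i \in \{0,1\}\}$ where $n = 2^{a_1} + \cdots + 2^{a_t}$. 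I would then bound $\sum_{H \in \mathcal{M}} {\rm fpr}(x, G/H)$ using \cite[Proposition 3.1]{GK}, which for non-transvection unipotent $x$ gives a power saving strong enough (something like ${\rm fpr}(x, G/P_m) < 2 \cdot 3^{-2m}$ or a comparable bound reflecting $\dim C_V(x) \leqs n - 2$) that the geometric sum over the at most $n$ relevant values of $m$ stays below $1$.

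The main obstacle I anticipate is precisely the borderline unipotent classes where the codimension of the fixed space is small relative to $m$ — the reason the introductory remark notes that the naive bound ${\rm fpr}(x,G/P_m) < 2 \cdot 3^{-m}$ does \emph{not} give $\Sigma(x) < 1$, and why the authors are not claiming $\Sigma(x) < 1$ in case (ii) of Theorem \ref{t:main2}. So the theorem is genuinely being proved here by the $\la P, x^g\ra$ formulation rather than via $\Sigma(x) < 1$. The cleanest route, and the one I expect the authors to take, is therefore to note that the only unipotent class of prime order not covered by a direct fixed-point-ratio argument with room to spare is the transvection class (handled by Lemma \ref{l:psl_trans} via an explicit irreducibility construction), while every other unipotent element of order $3$ satisfies $\dim C_V(x) \leqs n-2$ with enough slack that the sum $\sum_m {\rm fpr}(x,G/P_m) < 1$; combining this with Lemma \ref{l:psl1} for semisimple elements and reducing to prime order via the power trick completes the proof. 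I would also keep in mind that small $n$ (e.g. $n = 7, 9$) may need a {\sc Magma} check of the type already used elsewhere in the paper, in case the asymptotic estimate is not quite sharp enough there.
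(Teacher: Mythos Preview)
Your proposal is correct and matches the paper's proof: reduce to prime order, invoke Lemmas \ref{l:psl1} and \ref{l:psl_trans} for semisimple elements and transvections respectively, and then verify $\Sigma(x)<1$ for the remaining unipotent $x$ with $\dim C_V(x)\leqs n-2$ using the parabolic overgroups of $P$. The one point to sharpen is that no strengthened $3^{-2m}$-type bound from \cite{GK} is used (or needed); the paper applies the standard ${\rm fpr}(x,G/P_m)<2\cdot 3^{-m}$ estimate for $2\leqs m\leqs n-2$ (total contribution below $2/3$) and treats $m\in\{1,n-1\}$ by the direct observation that fixed $1$-spaces lie in $C_V(x)$, giving ${\rm fpr}(x,G/P_1)\leqs (3^{n-2}-1)/(3^n-1)$, so the argument is uniform in $n\geqs 7$ and no {\sc Magma} check is required.
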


\begin{proof}
Let $x \in G$ be nontrivial. In view of Lemmas \ref{l:psl1} and \ref{l:psl_trans}, we may assume $x$ is a unipotent element of order $3$ with $\dim C_V(x) \leqs n-2$. As before, let $\mathcal{M}$ be the set of maximal overgroups of $P$ in $G$ and recall that each $H \in \mathcal{M}$ is the stabilizer of an $m$-space. Moreover, there is a unique $G$-class of such subgroups for each $m$ and we have $n(H,P) = 1$ since $P$ fixes at most one subspace of any given dimension. As before, it suffices to show that $\Sigma(x)<1$.

By applying the upper bound in \cite[Proposition 3.1]{GK}, we see that the contribution to $\Sigma(x)$ from the stabilizers of $m$-spaces with $m \ne 1,n-1$ is less than
\[
4\sum_{i=2}^{(n-1)/2}3^{-i} < \frac{2}{3}.
\]
Finally, suppose $H \in \mathcal{M}$ is the stabilizer of a $1$-space (or an $(n-1)$-space) and set $\O = G/H$, so $|\O| = (3^n-1)/2$. Since $C_{\O}(x)$ coincides with the set of $1$-spaces in $C_V(x)$ it follows that $|C_{\O}(x)| \leqs (3^{n-2}-1)/2$ and ${\rm fpr}(x,G/H) \leqs (3^{n-2}-1)/(3^n-1)$. Therefore,
\[
\Sigma(x) < \frac{2}{3}+2\left(\frac{3^{n-2}-1}{3^n-1}\right) < 1
\]
and the result follows.
\end{proof}

To complete the proof of Theorem \ref{t:main} we may assume $G = {\rm L}_n^{\e}(q)$ and $n \leqs 5$. 

\begin{thm}\label{t:psl2}
The conclusion to Theorem \ref{t:main2} holds if $G = {\rm L}_2(q)$.
\end{thm}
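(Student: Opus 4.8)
The plan is to verify the bound $\Sigma(x)<1$ for every $x \in G$ of prime order, which suffices since $G = {\rm L}_2(q)$ is never one of the groups excluded in part (ii) of Theorem \ref{t:main2}. By Remark \ref{r:main}(b) we may assume $q = p^f$ is odd, and then simplicity forces $q \geqs 5$. A Sylow $2$-subgroup $P$ of $G$ is dihedral, so $\mathcal{M}$ is precisely the set of odd-index maximal subgroups of $G$; Dickson's classification of the subgroups of ${\rm L}_2(q)$ (see also \cite{LS85} or \cite[Tables 8.1, 8.2]{BHR}) shows that each $H \in \mathcal{M}$ is one of: (a) the normaliser $D_{q-\e}$ of a maximal torus, where $q \equiv \e \imod 4$ (maximal for $q \geqs 11$); (b) a subfield subgroup, either ${\rm PGL}_2(q_0)$ with $q = q_0^2$, or ${\rm PSL}_2(q_0)$ with $q = q_0^e$ for an odd prime $e$ dividing $f$; or (c) one of $A_4$, $S_4$, $A_5$ not of subfield type, which occurs only when $q = p$ is prime and lies in a suitable residue class. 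There are at most $\log\log q$ classes of subfield subgroups of type ${\rm PSL}_2(q_0)$ and a bounded number of classes of each remaining type, and by Theorem \ref{t:kon} and Lemma \ref{l:count} we have $n(H,P) \leqs |N_G(P):P| \leqs 3$; in fact $n(H,P) = 1$ for every $H$ except possibly $H = D_{q-\e}$, and $n(D_{q-\e},P) = 3$ only when $q \equiv \pm 3 \imod 8$ (in which case $P$ is a Klein four-group).

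For a small set of fields the uniform estimates below are not quite strong enough — for instance, if $G = {\rm L}_2(11)$ and $x$ is an involution then a direct calculation gives $\Sigma(x) = 51/55$ — so I would first dispose of all $q$ below a suitable bound by computing $\Sigma(x)$ exactly with {\sc Magma}, as in Remark \ref{r:matrix}; this also covers the exceptional isomorphisms ${\rm L}_2(5) \cong A_5$ and ${\rm L}_2(9) \cong A_6$.

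For the remaining $q$ I would bound the contribution of each family to $\Sigma(x)$ separately, using ${\rm fpr}(x,G/H) = |x^G \cap H|/|x^G|$ together with the elementary inequality $|x^G| \geqs \frac{1}{2}q(q-1)$, valid for all prime-order $x \in G$ (the minimum being attained by an involution when $q \equiv 3 \imod 4$). A unipotent $x$ has no fixed points on $G/D_{q-\e}$, and a direct count of the involutions and regular semisimple elements of $D_{q-\e}$ gives ${\rm fpr}(x,G/D_{q-\e}) \leqs 4/3q$ for $q \geqs 13$; with $n(H,P) \leqs 3$ and a single class, this family contributes at most $4/q$. Each subfield subgroup ${\rm PSL}_2(q_0)$ has order less than $q/2$, hence contributes less than $1/(q-1)$, and there are at most $\log\log q$ of them; the single class of type ${\rm PGL}_2(q_0)$ (present only when $f$ is even, which forces $q \equiv 1 \imod 8$ and $n(H,P) = 1$) contributes at most $2/(q+1)$, using that ${\rm PGL}_2(q_0)$ contains exactly $q_0^2 = q$ involutions and $q-1$ nontrivial unipotent elements. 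Finally, if one of $A_4$, $S_4$, $A_5$ occurs then $q = p$ is prime, there are at most two $G$-classes, each with $n(H,P) = 1$ and $|x^G \cap H| \leqs 24$, so this family contributes at most $96/(q(q-1))$. Adding the relevant bounds yields $\Sigma(x) < 1$, which completes the proof.

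The main obstacle is that the convenient uniform estimate ${\rm fpr}(x,G/H) \leqs 4/3q$ (used, for example, in the proof of Theorem \ref{t:symp4}) genuinely fails for the subgroups of type ${\rm PGL}_2(q^{1/2})$ — where the ratio is as large as $2/(q+1)$ — and for the maximal subgroups $A_4$, $S_4$, $A_5$, so one must instead argue from exact conjugacy-class and centraliser data; this is harmless here, since every subgroup in $\mathcal{M}$ has order at most $\max\{q+1,\,q^{3/2},\,60\}$, but it does require keeping careful track of the number of $G$-classes and the value of $n(H,P)$, which is exactly where Theorem \ref{t:kon} and the known structure of the maximal subgroups of ${\rm L}_2(q)$ enter.
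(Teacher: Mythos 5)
Your proposal follows the same skeleton as the paper's proof: dispose of small $q$ by exact computation in {\sc Magma}, take the list of odd-index maximal subgroups of ${\rm L}_2(q)$ (subfield subgroups, $D_{q-\e}$, and $A_4$, $S_4$, $A_5$), control multiplicities via Theorem \ref{t:kon} and Lemma \ref{l:count} (so $n(H,P) \leqs 3$), and sum fixed point ratios. The genuine difference is in how the fixed point ratios are bounded: the paper simply quotes the minimal-degree bound of \cite{LS91}, namely ${\rm fpr}(x,G/H) \leqs 4/3q$ for the subgroups in (b)--(f) and $\leqs 2/(q+1)$ for the subgroups of type ${\rm GL}_2(q_0)$ with $q=q_0^2$, whereas you derive everything by elementary counting from $|x^G| \geqs \frac{1}{2}q(q-1)$ and explicit element counts in each $H$; this makes the estimate self-contained and also gives the sharper observation that $n(H,P)=1$ for all $H$ except possibly $D_{q-\e}$, at the cost of more case analysis. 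Two small corrections are needed but do not affect the conclusion. First, there are \emph{two} $G$-classes of subgroups of type ${\rm GL}_2(q_0)$ when $q=q_0^2$ (they are fused in ${\rm PGL}_2(q)$; see \cite[Table 8.1]{BHR} and the paper's own proof), so that term in your sum should be $4/(q+1)$ rather than $2/(q+1)$ -- your total still stays well below $1$. Second, your justification of the $2/(q+1)$ bound for these subgroups only treats involutions and unipotent elements; for semisimple $x$ of odd prime order you need a further line, e.g.\ either note that $x^G$ meets each maximal torus of $H$ in at most two elements (so ${\rm fpr}(x,G/H) \leqs q_0(q_0+1)/(q(q+1))$), or just use the trivial bound $|H|/|x^G| < 2\sqrt{q}/(q-1)$, which suffices since $q = q_0^2$ is large outside the range handled by computer. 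With these adjustments your argument is correct.
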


\begin{proof}
With the aid of {\sc Magma}, one can check that $\Sigma(x) \leqs 51/55$ for all nontrivial $x \in G$ when $q<101$ (maximal when $q=11$), so we may assume $q \geqs 101$. By Theorem \ref{t:kon} we have $|N_G(P):P| = 3$ if $q \equiv \pm 3 \imod{8}$, otherwise $N_G(P)=P$. Let $H \in \mathcal{M}$ be a maximal subgroup of $G$ containing $P$. Then by inspecting \cite{Mas}, we deduce that one of the following holds:
\begin{itemize}\addtolength{\itemsep}{0.2\baselineskip}
\item[{\rm (a)}] $H$ is a subfield subgroup of type ${\rm GL}_2(q_0)$, where $q=q_0^2$.
\item[{\rm (b)}] $H$ is a subfield subgroup of type ${\rm GL}_2(q_0)$, where $q=q_0^e$ and $e \geqs 3$ is a prime.
\item[{\rm (c)}] $H = D_{q-\e}$ and $q \equiv \e \imod{4}$.
\item[{\rm (d)}] $H = A_4$ and $q=p \equiv \pm 3, \pm 13 \imod{40}$.
\item[{\rm (e)}] $H = S_4$ and $q=p \equiv \pm 7 \imod{16}$.
\item[{\rm (f)}] $H = A_5$ and $q=p \equiv \pm 11, \pm 19 \imod{40}$.
\end{itemize}
Write $\mathcal{M} = \mathcal{M}_1 \cup \mathcal{M}_2$, where $\mathcal{M}_1$ comprises the subfield subgroups in (a). In addition, write $\Sigma(x) = \Sigma_1(x) + \Sigma_2(x)$ for all nontrivial $x \in G$, where $\Sigma_1(x)$ is the contribution from the subgroups in $\mathcal{M}_1$.

Suppose $q=q_0^2$, so $q \equiv 1 \imod{8}$ and thus $N_G(P)=P$. There are two $G$-classes of subfield subgroups $H$ of type ${\rm GL}_2(q_0)$ and the main theorem of \cite{LS91} yields ${\rm fpr}(x,G/H) \leqs 2/(q+1)$. Since $n(H,P) = 1$, it follows that
\[
\Sigma_1(x) \leqs \frac{4}{q+1}.
\]
Similarly, if $H$ is one of the subgroups labelled (b)-(f), then ${\rm fpr}(x,G/H) \leqs 4/3q$ by \cite{LS91} and we note that $n(H,P) \leqs 3$. Since there are at most $\log\log q$ possibilities for $e$ in (b), we deduce that 
\[
\Sigma_2(x) \leqs 3(\log\log q+3) \cdot \frac{4}{3q}
\]
and it is straightforward to check that $\Sigma(x)<1$ as required.
\end{proof}

We will need the following lemma for the $3$-dimensional groups.

\begin{lem}\label{l:fpr6}
Let $G = {\rm L}_3^{\e}(q)$ and let $H$ be a subfield subgroup of type ${\rm GL}_3^{\e}(q_0)$, where $q = q_0^e$ and $e$ is an odd prime. Then ${\rm fpr}(x,G/H) < q^{-2}$ for all nontrivial $x \in G$. 
\end{lem}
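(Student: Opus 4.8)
The plan is to reduce to elements of prime order and then compare $|x^G \cap H|$ with $|x^G|$ by a short case analysis on the type of $x$. First I would use the inequality ${\rm fpr}(x,G/H) \leqs {\rm fpr}(x^m,G/H)$ from Section~\ref{s:intro} to assume that $x$ has prime order, so $x$ is either unipotent or semisimple. Next I would fix notation: let $\widetilde{G} = {\rm PGL}_3^{\e}(q)$ and let $\widetilde{H} = {\rm PGL}_3^{\e}(q_0)$ be the subfield subgroup of $\widetilde{G}$, so that $G \leqs \widetilde{G}$ and $H \leqs \widetilde{H}$; by \cite[Proposition 4.5.3]{KL} the group $H_0 := {\rm soc}(H) \cong {\rm L}_3^{\e}(q_0)$ satisfies $|\widetilde{H}:H| \leqs 3$ and $|H:H_0| \leqs 3$, and in particular $|H| \leqs |\widetilde{H}| < q_0^{8} = q^{8/e}$. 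Since we are in odd characteristic, $q_0 \geqs 3$ and $e \geqs 3$, whence $q \geqs 27$. Finally I would record, from \cite[Section 3.5]{BG_book}, that the only noncentral prime-order classes of $G$ are those of a transvection $t$ (a long root element, with $|t^G| = (q^3-\e)(q+\e)$), a semisimple element $s$ with eigenvalue multiplicities $(2,1)$ on the natural module (with $|s^G| = q^2(q^2+\e q+1)$), a regular unipotent element, and a regular semisimple element; in the last two cases $|C_G(x)|$ is at most $2q^2$, so $|x^G| > q^6/10$.

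I would then treat three cases. If $x$ is neither a transvection nor of type $(2,1)$, then $|x^G| > q^6/10$ and the crude bound $|x^G \cap H| \leqs |H| < q^{8/e}$ give ${\rm fpr}(x,G/H) < 10\,q^{8/e-6} \leqs 10\,q^{-10/3} < q^{-2}$, since $q^{4/3} \geqs 27^{4/3} = 81$. If $x$ is a transvection, I would observe that every $G$-transvection contained in $H$ lies in $H_0$ (its order is coprime to $|H:H_0|$), and that $H_0$ has a single class of transvections, all $G$-conjugate to $x$; hence $|x^G \cap H| = (q_0^3-\e)(q_0+\e) < \tfrac{4}{3}q^{4/e}$, while $|x^G| = (q^3-\e)(q+\e) > \tfrac{2}{3}q^4$, so ${\rm fpr}(x,G/H) < 2\,q^{4/e-4} \leqs 2\,q^{-8/3} < q^{-2}$ (using $q^{2/3}\geqs 9$). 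If $x$ has type $(2,1)$, I would use the field-independence of conjugacy for such elements, namely that two semisimple elements of ${\rm GL}_3^{\e}(q_0)$ of this type are ${\rm PGL}_3^{\e}(q)$-conjugate if and only if one eigenvalue multiset is an $\mathbb{F}_{q_0}^{\times}$-multiple of the other; this forces $x^{\widetilde{G}} \cap \widetilde{H}$ to lie in a single $\widetilde{H}$-class, of size $q_0^2(q_0^2+\e q_0+1) < \tfrac{3}{2}q^{4/e}$, so $|x^G \cap H| \leqs |x^{\widetilde{G}} \cap \widetilde{H}| < \tfrac{3}{2}q^{4/e}$; combined with $|x^G| = q^2(q^2+\e q+1) > \tfrac{2}{3}q^4$ this yields ${\rm fpr}(x,G/H) < \tfrac{9}{4}q^{4/e-4} \leqs \tfrac{9}{4}q^{-8/3} < q^{-2}$.

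The main obstacle is bookkeeping rather than estimation: one has to pin down the structure of the subfield subgroup $H$ inside $G = {\rm L}_3^{\e}(q)$ (as opposed to ${\rm SL}_3^{\e}(q)$ or ${\rm GL}_3^{\e}(q_0)$), control the fusion of $H$-classes into $G$-classes, and account for the effect of the central quotient on class sizes. The clean input that makes this manageable is the field-independence of conjugacy of a fixed type in ${\rm GL}_3^{\e}$, which forces at most one $\widetilde{H}$-class to meet a given $\widetilde{G}$-class. Once this is in place the numerical inequalities are comfortable for $q \geqs 27$; the tightest instance is $e = 3$ with $q_0 = 3$, and any genuinely small exceptional cases (for instance $q = 8$, if one wished to allow even characteristic) could be dispatched directly with {\sc Magma}, though this is not needed here.
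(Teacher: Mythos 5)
Your proposal is correct and takes essentially the same route as the paper: reduce to elements of prime order and compare $|x^G \cap H|$ with $|x^G|$ case by case (transvections, type $(2,1)$ semisimple elements, and regular elements, where you use the crude bound $|x^G \cap H| \leqs |H| < q^{8/e}$ while the paper uses slightly more explicit estimates for regular unipotent elements). Two harmless caveats: your intermediate bound $(q_0^3-\e)(q_0+\e) < \tfrac{4}{3}q^{4/e}$ fails at $(q_0,\e)=(3,-)$ since $112 > 108$ (any constant up to about $\tfrac{3}{2}$ works and the final inequality still holds with ample room), and your argument assumes odd characteristic, hence $q \geqs 27$, which the paper's proof does not require, although this is immaterial for the way the lemma is applied.
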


\begin{proof}
Let $x \in G$ be an element of prime order $r$. First assume $r=p$, so $x$ has Jordan form $(J_2,J_1)$ or $(J_3)$ on the natural module. If $x = (J_2,J_1)$ then 
\[
|x^G \cap H| = \frac{|{\rm GL}_3^{\e}(q_0)|}{q_0^3|{\rm GL}_1^{\e}(q_0)|^2} = (q_0+\e)(q_0^3-\e),\;\; |x^G| = (q+\e)(q^3-\e)
\]
and the result follows. Similarly, if $x = (J_3)$ then
\[
|x^G \cap H| \leqs \frac{|{\rm GL}_3^{\e}(q_0)|}{q_0^2|{\rm GL}_1^{\e}(q_0)|} = q_0(q_0^2-1)(q_0^3-\e),\;\; |x^G| \geqs \frac{|{\rm GL}_3^{\e}(q)|}{3q^2|{\rm GL}_1^{\e}(q)|} = \frac{1}{3}q(q^2-1)(q^3-\e)
\]
and once again it is straightforward to check that these bounds are sufficient.

Now assume $r \ne p$. If $x$ is regular, then the trivial bound $|x^G \cap H| \leqs |{\rm PGL}_3^{\e}(q_0)|$ combined with the lower bound
\[
|x^G| \geqs \frac{|{\rm GU}_3(q)|}{3(q+1)^3} = \frac{1}{3}q^3(q-1)(q^2-q+1)
\]
is good enough. On the other hand, if $x$ is non-regular then $|x^G \cap H| = q_0^2(q_0^2+\e q_0+1)$, $|x^G| = q^2(q^2+\e q+1)$ and the result follows.\end{proof}

\begin{thm}\label{t:psln1}
The conclusion to Theorem \ref{t:main2} holds if $G = {\rm L}_n^{\e}(q)$ and $3 \leqs n \leqs 5$.
\end{thm}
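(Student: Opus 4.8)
The plan is to split the analysis by the dimension $n \in \{3,4,5\}$ and, within each case, by the value of $q$, since the structure of $\mathcal{M}$ and the available fixed point ratio bounds differ markedly. For all three dimensions, small values of $q$ will be handled computationally via {\sc Magma} as described in Remark \ref{r:matrix}, reducing us to $q$ large enough that the generic estimates below become effective. In each case we use Lemma \ref{l:count} together with Theorem \ref{t:kon} to bound $n(H,P)$: for $G = {\rm L}_n^{\e}(q)$ we have $n(H,P) \leqs |N_G(P):P| = ((q-\e)_{2'})^{t(n)-1}/(n,q-\e)_{2'}$, and since $t(3)=2$, $t(4)=1$, $t(5)=2$, this gives $n(H,P) \leqs (q-\e)_{2'}/(3,q-\e)_{2'} \leqs (q-\e)/2$ for $n \in \{3,5\}$ and $n(H,P) = 1$ for $n=4$. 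Combined with the observation that $P$ fixes at most one subspace of any given dimension, we will have $n(H,P) = 1$ whenever $H$ is a subspace stabilizer.

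For $n = 3$ I would first list the possibilities for $H \in \mathcal{M}$ using \cite{LS85,Mas}: subfield subgroups of type ${\rm GL}_3^{\e}(q_0)$ with $q = q_0^e$ and $e$ an odd prime (here the number of $G$-classes is controlled by \cite[Proposition 4.5.3]{KL}), parabolic subgroups ${\rm L}_3^{\e}(q)$ with $\e=+$ (stabilizers of $1$- or $2$-spaces) and in the unitary case the nondegenerate $1$-space stabilizer, plus the maximal rank subgroup of type ${\rm GL}_1^{\e}(q) \wr S_3$ and possibly $3^2{:}{\rm SL}_2(3)$ or $3^{1+2}{:}Q_8$ type subgroups when $q \equiv \e \imod 9$ or small sporadic cases; for the subfield subgroups I would invoke Lemma \ref{l:fpr6} to get ${\rm fpr}(x,G/H) < q^{-2}$, which against the factor $(q-\e)/2$ and $O(\log\log q)$ choices of $e$ still tends to $0$. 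For the parabolic (subspace) subgroups, since $n(H,P)=1$, I would apply \cite[Proposition 3.1]{GK} to bound ${\rm fpr}(x,G/H)$ by roughly $2q^{-1}$, and for the handful of remaining non-subspace maximal subgroups of bounded order, use the general bound from \cite{LS91} of the form $4/3q$ with the multiplicity $n(H,P) \leqs (q-\e)/2$; summing these contributions and checking $\Sigma(x) < 1$ for $q$ sufficiently large, with {\sc Magma} clearing the finitely many remaining $q$.

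For $n = 4$ the situation is cleaner because $n(H,P)=1$ for every $H \in \mathcal{M}$: one enumerates the odd-index maximal subgroups — subfield subgroups ${\rm GL}_4^{\e}(q_0)$, subspace stabilizers (parabolics for $\e = +$, nondegenerate subspace stabilizers for $\e = -$), imprimitive subgroups of type ${\rm GL}_1^{\e}(q) \wr S_4$ and ${\rm GL}_2^{\e}(q) \wr S_2$, the field-extension/symplectic-type subgroups, and ${\rm Sp}_4(q)$, $\O_4^{\pm}(q)$ type subgroups — applies \eqref{e:fpr1} via \cite{Bur1} for the non-subspace cases (using $|x^G|$ large enough, with the worst case a long root element or $(-I_1,I_3)$ type transvection), \cite[Proposition 3.1]{GK} for subspace cases, and bounds the total number of classes by a constant plus $O(\log\log q)$; then $\Sigma(x)$ is easily $< 1$ for $q$ large, with small $q$ done by {\sc Magma}. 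For $n = 5$ one again has $t(5)=2$, so $n(H,P) \leqs (q-\e)/2$ only matters for the subfield subgroups (of which there are $O(\log\log q)$ classes); the subspace stabilizers have $n(H,P)=1$ and \cite[Proposition 3.1]{GK} (or the classical fixed point ratio bounds of \cite{Bur1}) handle them, while the few remaining imprimitive or field-extension type subgroups are bounded in number and controlled by \cite{LS91} or \cite{Bur1}; the main point is that the subfield contribution is at most $O(q \log\log q) \cdot q^{-\text{(something)} \geqs 3}$ by the analogue of Lemma \ref{l:fpr6}, hence negligible.

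The main obstacle will be the $3$-dimensional case, and specifically the interaction between the potentially large factor $n(H,P) \leqs (q-\e)/2$ coming from subfield subgroups and the relatively weak fixed point ratios available for such subgroups in this low rank. Lemma \ref{l:fpr6} is precisely the tool designed to overcome this: its bound ${\rm fpr}(x,G/H) < q^{-2}$, multiplied by $(q-\e)/2$ and by the $O(\log\log q)$ count of subfield classes, gives a contribution $O(q^{-1}\log\log q) \to 0$, so this is handled, but it requires the careful class-by-class centralizer estimates in that lemma rather than a black-box bound. A secondary nuisance is keeping track of the exact number of $G$-classes of each type of subspace stabilizer and imprimitive subgroup (the linear versus unitary cases differ, and ${\rm L}_3^{\e}(q)$ has extra small maximal subgroups for certain congruences of $q$ mod $9$), but these contribute only boundedly many terms each bounded by $O(q^{-1})$, so after fixing an explicit threshold $q_0$ and verifying $q < q_0$ by {\sc Magma} the bound $\Sigma(x) < 1$ follows for all remaining $q$.
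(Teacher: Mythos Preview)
Your approach is essentially the same as the paper's: split by $n$, dispose of small $q$ by {\sc Magma}, and for large $q$ combine the odd-index maximal subgroup list from \cite{Mas}, the bound on $n(H,P)$ from Theorem \ref{t:kon}, and fixed point ratio bounds (Lemma \ref{l:fpr6} for $n=3$ subfield subgroups, \cite{LS91} or \cite{Bur1} elsewhere). The key observation that $n(H,P)=1$ for subspace stabilizers and that the only dangerous contribution (subfield subgroups for $n \in \{3,5\}$) is tamed by a $q^{-2}$ bound is exactly what the paper uses.

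A few places where the paper is tighter than your sketch: for $n=3$ the list from \cite{Mas} is precisely (a) subfield, (b) ${\rm GL}_1^{\e}(q) \wr S_3$ with $q \equiv \e \imod 4$, and (c)/(d) the relevant subspace stabilizers --- the extraspecial-type subgroups you hedge about do not arise for $q \geqs 29$, so no extra work is needed there. For $n=4$ the paper simply observes $N_G(P)=P$, counts $|\mathcal{M}| \leqs 6 + \log\log q$, and applies the uniform bound $4/3q$ from \cite{LS91}; your proposed case-by-case treatment of ${\rm Sp}_4(q)$, $\O_4^{\pm}(q)$, etc., is unnecessary (and several of those are not of odd index anyway). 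For $n=5$ the paper does \emph{not} prove an analogue of Lemma \ref{l:fpr6}; instead it uses \eqref{e:fpr1} from \cite{Bur1} together with the class-size bound $|x^G| \geqs (q^4-1)(q^5-\e)/(q-\e)$ to get ${\rm fpr}(x,G/H) < q^{-2}$ for $H \in \mathcal{M}_1$ --- your claimed exponent of at least $3$ is a little optimistic, but $q^{-2}$ against the factor $(q+1)/2$ and $O(\log\log q)$ classes already gives a vanishing $\Sigma_1(x)$, so the conclusion is unaffected.
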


\begin{proof}
First assume $n=3$. If $q <29$ then with the aid of {\sc Magma} it is straightforward to check that $\Sigma(x) \leqs 10/13$ for all nontrivial $x \in G$. Therefore, we may assume $q \geqs 29$. Let $H \in \mathcal{M}$ and note that 
\[
n(H,P) \leqs |N_G(P):P| = \frac{(q-\e)_{2'}}{(3,q-\e)} \leqs \frac{1}{2}(q+1)
\]
by Theorem \ref{t:kon}. By inspecting \cite{Mas} we see that one of the following holds:
\begin{itemize}\addtolength{\itemsep}{0.2\baselineskip}
\item[{\rm (a)}] $H$ is a subfield subgroup of type ${\rm GL}_3^{\e}(q_0)$, where $q=q_0^e$ and $e \geqs 3$ is a prime.
\item[{\rm (b)}] $H$ is of type ${\rm GL}_1^{\e}(q) \wr S_3$ and $q \equiv \e \imod{4}$.
\item[{\rm (c)}] $\e=+$ and $H$ is the stabilizer of a $1$-space or a $2$-space.
\item[{\rm (d)}] $\e=-$ and $H$ is the stabilizer of a nondegenerate $1$-space.
\end{itemize}
Write $\mathcal{M} = \mathcal{M}_1 \cup \mathcal{M}_2$ and $\Sigma(x) = \Sigma_1(x) + \Sigma_2(x)$, where $\mathcal{M}_1$ comprises the subfield subgroups in (a). 

Let $H \in \mathcal{M}_1$ be a subfield subgroup as in (a). By inspecting \cite[Tables 8.3, 8.5]{BHR} we note that for each $e$ there are $m$ distinct $G$-classes of subgroups of this form, where
\[
m = \left(3,\frac{q-\e}{q_0-\e}\right) = \left\{\begin{array}{ll}
3 & \mbox{if $e=3$ and $q_0 \equiv \e \imod{3}$} \\
1 & \mbox{otherwise.}
\end{array}\right.
\]
If $m=3$ then $(3,q-\e)=3$ and thus $n(H,P) \leqs (q+1)/6$. Therefore, since there are at most $\log\log q$ possibilities for $e$, by applying Lemma \ref{l:fpr6} we deduce that 
\[
\Sigma_1(x) < \frac{1}{2}(q+1)q^{-2}\log\log q.
\]

Now let us consider the subgroups $H$ in $\mathcal{M}_2$. There is a unique conjugacy class of subgroups of type (b) and (d), and there are two classes in (c) (one for each dimension). Moreover, $P$ fixes a unique $1$-space and a unique $2$-space (in (d), the relevant $2$-space is simply the orthogonal complement of the $1$-space fixed by $P$), so we have $n(H,P) = 1$ in (c) and (d). Therefore, by applying the main theorem of \cite{LS91} we deduce that
\[
\Sigma_2(x) < (2 + (q+1)/2)\cdot \frac{4}{3q}
\]
and it is now straightforward to check that $\Sigma(x) < 1$ for all nontrivial $x \in G$.

Next assume $n=4$. If $q<11$ then we compute $\Sigma(x) \leqs 34/117$ for all nontrivial $x \in G$, so we may assume $q \geqs 11$. Note that Theorem \ref{t:kon} yields $N_G(P) = P$ and thus $n(H,P) = 1$ for all $H \in \mathcal{M}$. Then in the usual way, using \cite{LS91,Mas}, we deduce that
\[
\Sigma(x) < (\delta\log\log q + 6) \cdot \frac{4}{3q} < 1
\]
for all nontrivial $x \in G$, where $\delta=1$ if $q \geqs 27$, otherwise $\delta=0$. 

Finally, let us assume $n=5$. If $q<9$ then we compute $\Sigma(x) \leqs 82/121$ for all nontrivial $x \in G$, so we may assume $q \geqs 9$. Let $H \in \mathcal{M}$ and note that 
\[
n(H,P) \leqs |G:N_G(P)| = \frac{(q-\e)_{2'}}{(5,q-\e)} \leqs \frac{1}{2}(q+1)
\]
by Theorem \ref{t:kon}. Then by applying the main theorem of \cite{Mas}, we observe that one of the following holds:
\begin{itemize}\addtolength{\itemsep}{0.2\baselineskip}
\item[{\rm (a)}] $H$ is a subfield subgroup of type ${\rm GL}_5^{\e}(q_0)$, where $q=q_0^e$ and $e \geqs 3$ is a prime.
\item[{\rm (b)}] $H$ is of type ${\rm GL}_1^{\e}(q) \wr S_5$ and $q \equiv \e \imod{4}$.
\item[{\rm (c)}] $\e=+$ and $H$ is the stabilizer of a $1$-space or a $4$-space.
\item[{\rm (d)}] $\e=-$ and $H$ is the stabilizer of a nondegenerate $1$-space.
\end{itemize}
Write $\mathcal{M} = \mathcal{M}_1 \cup \mathcal{M}_2$ and $\Sigma(x) = \Sigma_1(x) + \Sigma_2(x)$, where $\mathcal{M}_1$ comprises the subgroups in (a) and (b). 

Let $H \in \mathcal{M}_1$. By combining the bound in \eqref{e:fpr1} (see \cite{Bur1}) with the lower bound 
\[
|x^G| \geqs \frac{|{\rm GL}_5^{\e}(q)|}{q^7|{\rm GL}_{3}^{\e}(q)||{\rm GL}_1^{\e}(q)|} = \frac{(q^4-1)(q^5-\e)}{q-\e}
\]
(equality if $x$ is unipotent with Jordan form $(J_2,J_1^3)$) we deduce that ${\rm fpr}(x,G/H) < q^{-2}$ for all nontrivial $x \in G$. Now $G$ has a unique class of subgroups as in (b). In addition, if $q=q_0^e$ then $G$ has $m$ classes of subfield subgroups of type ${\rm GL}_{5}^{\e}(q_0)$, where
\[
m = \left(5,\frac{q-\e}{q_0-\e}\right) = \left\{\begin{array}{ll}
5 & \mbox{if $e=5$ and $q_0 \equiv \e \imod{5}$} \\
1 & \mbox{otherwise.}
\end{array}\right.
\]
If $m=5$ then $(5,q-\e)=5$ and thus $n(H,P) \leqs (q+1)/10$. Since there are at most $\log\log q$ possibilities for $e$ we deduce that 
\[
\Sigma_1(x) < \frac{1}{2}(q+1)q^{-2}(\delta\log\log q + 1),
\]
where $\delta=1$ if $q \geqs 27$, otherwise $\delta=0$. 

Finally, suppose $H \in \mathcal{M}_2$. As before, $P$ fixes a unique $1$-space and a unique $4$-space, so $n(H,P) = 1$ and by applying the main theorem of \cite{LS91} we deduce that
\[
\Sigma_2(x) < 2\cdot \frac{4}{3q}.
\]
Since $q \geqs 9$, one can now check that $\Sigma(x) < 1$ for all nontrivial $x \in G$.
\end{proof}

By combining Theorems \ref{t:ex}, \ref{t:o_odd}, \ref{t:o_even}, \ref{t:symp6}, \ref{t:symp4}, \ref{t:psun}, \ref{t:psln2}, \ref{t:psl2} and \ref{t:psln1}, we conclude that the proof of Theorem \ref{t:main2} is complete. This reduces the proof of Theorem \ref{t:main} to the linear groups ${\rm L}_n(3)$ with $n \geqs 7$ odd and we handled this special case in Theorem \ref{t:psln3}. Therefore, the proof of Theorem \ref{t:main} is complete.

\end{document}